\title{Online Ramsey numbers: Long versus short cycles}
\author{Grzegorz Adamski}{Faculty of Mathematics and CS, Adam Mickiewicz University, Pozna\'n, Poland}{grzada@amu.edu.pl}{https://orcid.org/0000-0003-0381-0351}{}
\author{Małgorzata {Bednarska-Bzd\c ega}}{Faculty of Mathematics and CS, Adam Mickiewicz University, Pozna\'n, Poland}{mbed@amu.edu.pl}{}{}
\author{Václav Blažej}{Faculty of Information Technology, Czech Technical University in Prague, Prague, Czech Republic}{vaclav.blazej@fit.cvut.cz}{https://orcid.org/0000-0001-9165-6280}{}
\authorrunning{G. Adamski, M. {Bednarska-Bzd\c ega}, and V. Blažej}
\keywords{online Ramsey theory, combinatorial game theory}
\newcommand{\N}{\mathbb N}
\newcommand{\RR}{\tilde R}
\newcommand{\rr}{\tilde{r}}
\newcommand{\Oh}{\mathcal O}
\newcommand{\IP}{I\!P} 
\newcommand{\dtail}{\mathcal T}
\newcommand{\dragon}{\mathfrak{D}}
\newcommand{\TRI}{\bigtriangleup}
\def\rf{\hat{r}} 
\newcommand\putabove[2]{\mathrel{\overset{\makebox[0pt]{\mbox{\normalfont\tiny\sffamily #2}}}{#1}}}
\tikzset{
    >=stealth',
    longpath/.style={decorate, decoration={snake}},
    main/.style={draw=black,fill=white,circle,inner sep=2pt,solid},
    selected/.style={draw,line width=5pt,-,draw opacity=.3,solid,on layer=background,line cap=rect},
    hide/.style = {fill = none, draw = none, shape=rectangle},
    red/.style = {draw = red, solid, semithick},
    blue/.style = {draw = blue, densely dashed, semithick},
    tobe/.style = {draw = black, densely dotted, semithick},
    both/.style = {draw,dashed,blue,dash pattern=on 3pt off 3pt,
    postaction={dashed,red,dash pattern=on 7pt off 11pt, dash phase=-1pt} },
    gray/.style = {draw=gray, very thin},
    black/.style = {draw = black},
    expand bubble/.style={
        preaction={draw,line width=10.4pt},
        white,fill,draw,line width=10pt,
    },
}
\newcommand{\siamver}[2]{#2}
\begin{document}

\maketitle

\begin{abstract}
    Online Ramsey game is played between Builder and Painter on an infinite board $K_{\N}$.
    In every round Builder selects an edge, then Painter colors it red or blue.
    Both know target graphs $H_1$ and $H_2$.
    Builder aims to create either a red copy of $H_1$ or a blue copy of $H_2$ in $K_{\N}$ as soon as possible, and Painter tries to prevent it.
    The online Ramsey number $\rr(H_1,H_2)$ is the minimum number of rounds such that the Builder wins.
    We study $\rr(C_k,C_n)$ where $k$ is fixed and $n$ is large.
    We show that $\rr(C_k,C_n)=2n+\Oh(k)$ for an absolute constant $c$ if $k$ is even, while $\rr(C_k,C_n)\le 3n+o(n)$ if $k$ is odd.
\end{abstract}

\section{Introduction}

The classic Ramsey number $r(H_1,H_2)$ of graphs $H_1,H_2$ is the smallest number of vertices~$n$ such that every $2$-edge colored complete graph on $n$ vertices contains a monochromatic copy of $H_i$ in the $i$-th color, for some $i\in \{1,2\}$.
This presentation naturally leads to a variant where one aims to minimize the number of edges.
Introduced by Erd\H os, Faudree, Rousseau, and Schelp~\cite{Erdos1978TheSRN}, the size-Ramsey number $\rf(H_1,H_2)$ of a pair of graphs $H_1,H_2$ is the smallest integer $m$ such that there exists an $m$-edge graph $G$ with the property that every $2$-edge-coloring of $G$ results in a monochromatic copy of $H_i$ in the $i$-th color.

We study the game variant of the size-Ramsey number called online Ramsey number (or online size-Ramsey number).
A Ramsey game, introduced by Beck \cite{Beck1993AchievementsGA}, is played between Builder and Painter on an infinite board $K_{\N}$, i.e.,~a complete graph on the vertex set $\N$.
In each round, Builder selects an edge and Painter colors it either red or blue.
Given two finite graphs $H_1$ and $H_2$, Builder aims to create either a red copy of $H_1$ or a blue copy of $H_2$ in $K_{\N}$ as soon as possible, while Painter tries to prevent it.
The online Ramsey number $\rr(H_1,H_2)$ is the smallest number of rounds in which Builder can ensure the appearance of a red copy of $H_1$ or a blue copy of $H_2$ as a subgraph, provided both players play optimally.

Although the online Ramsey number is upper bounded by the size-Ramsey number (that is always finite), this bound is often far from optimal.
Probably the most challenging problem in the online Ramsey game theory is to determine $\rr(K_n,K_m)$ asymptotically.
Though exponential bounds on $\rr(K_n,K_n)$ are known \cite{Conlon2009OnlineRN,CFGH}, they differ in the power base, similarly as in the case of the corresponding Ramsey numbers.
Conlon, Fox, Grinshpun, and He \cite{CFGH} studied the off-diagonal case of $\rr(K_n,K_m)$ and, among more general results, proved that $\rr(K_3,K_n)$ is of order $n^3$ up to a polylogarithmic factor.

We focus on the off-diagonal online Ramsey numbers for cycles.
Let us mention first a few results involving cycles in the (off-line) Ramsey theory.

The Ramsey number for a pair of cycles of length $n$ was determined by Bondy and Erd\H os \cite{Bondy1973cyclesRN}.
The work of Rosta \cite{Rosta1973cyclesI,Rosta1973cyclesII} and Faudree and Schelp \cite{Schelp1974cycles} completed the picture by determining the Ramsey number for different combinations of cycle lengths $(C_k,C_n)$.
Their result implies that if $k$ is small and $n$ is big, then $r(C_k,C_n)=n+k/2-1$ for even $k$, while for odd $k$ we have $r(C_k,C_n)=2n-1$.

The size-Ramsey number for two cycles seems much harder to determine.
Haxell, Kohayakawa, and {\L}uczak~\cite{HKL} proved that $\rf(C_k,C_n)=\Oh(n)$ provided $n \ge k \gg \log n$.
Their result is in fact stronger since they considered the induced copies of $C_k$ and $C_n$.
The bounds on $\rf(C_k,C_n)$ in terms of multiplicative constants were later improved several times; recently Javadi, Khoeini, Omidi, and Pokrovskiy \cite{JKOP} showed that if $n$ is large enough and $n\ge k\ge \log n + 15$, then
$\rf(C_k, C_n)\le 113484\cdot 10^6 n$.
Recently, Bednarska-Bzdęga and Łuczak \cite{BL} proved that $\rf(C_k, C_n)\le A\cdot n$ for an absolute constant $A$, when $k$ does not depend on $n$.

As for the online Ramsey number of two cycles, Blažej, Dvořák, and Valla \cite{Blazej2019b} showed how Builder can obtain a monochromatic cycle of length $n$ within at most $72n-3$ rounds (and an induced monochromatic cycle in $735n-27$ rounds).

From the above it is clear that $\rr(C_k, C_n) = \Oh(n)$.
We aim to determine the precise multiplicative constant of $n$ within $\rr(C_k, C_n)$ provided $k \ge 3$ is fixed.
For even $k$ we determine $\rr(C_k, C_n)$ up to an additive term depending on $k$, while for odd $n$ we find a linear upper bound not far from optimal.
The main results of our paper are as follows.

\begin{theorem}\label{thm:even}
    $\rr(C_k,C_n) \le 2n + 20k$ for $n\ge 3k$ and even $k\ge 4$.
\end{theorem}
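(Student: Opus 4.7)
I would describe an explicit strategy for Builder. The strategy maintains two interacting objects: a \emph{blue path} $P$ that is ultimately to become the desired $C_n$, and a \emph{red path} $R$ whose endpoints supply $C_k$-threats, that is, uncolored edges which Painter must color blue in order to avoid completing a red $C_k$.

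In a short \emph{setup phase} of at most $O(k)$ rounds, Builder plays edges on fresh vertices with the goal of making $R$ into a red $P_k$ (i.e.\ a red path with $k-1$ edges, so that the chord between its two endpoints is already a $C_k$-threat). Any blue responses Painter gives during the setup are recorded as the initial portion of $P$, and if Painter ever completes a red $C_k$ we are done immediately; otherwise, at the end of the setup $R$ has the desired length and provides the first threat.

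In the \emph{main phase} Builder repeats pairs of moves. An \emph{extension move} plays an edge from the current endpoint of $P$ to a fresh vertex; a blue answer appends that edge to $P$, while a red answer is appended to the tail of $R$, creating one extra $C_k$-threat. The extension is followed by a \emph{threat move} in which Builder plays the current threat chord of $R$; this chord is necessarily painted blue, and Builder uses it as the next required edge of $P$. Thus each two-round block yields one new blue edge of the eventual cycle. After roughly $2n$ rounds $P$ has length about $n-1$, and one final threat move of this sort supplies the closing blue edge, yielding a blue $C_n$. Summing the setup, main phase, and final closure gives a total of at most $2n+20k$ rounds.

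The main obstacle, which dictates essentially every detail of the construction, is that the forced blue edges Builder extracts from $R$ are not free: the $C_k$-threat in a red $P_k$ is the chord between its two specific endpoints, so Builder cannot choose arbitrarily which edge Painter will be forced to color blue. To overcome this, Builder must synchronize the growth of $R$ and of $P$ so that at each step the endpoints of the current red path coincide with the two vertices Builder needs to connect in $P$. This is where the hypothesis that $k$ is even enters the argument: because $C_k$ is bipartite, Builder can lay out the vertices of $R$ alternately on two sides of a bipartition used for $P$ and, by extending $R$ alternately from both ends, maintain throughout the main phase the invariant that the threat chord matches the next desired edge of $P$. A routine but somewhat delicate bookkeeping argument then confirms that all overhead, including setup, the rare extension moves wasted by unhelpful blue answers, and the final closure, is absorbed into the additive term $20k$.
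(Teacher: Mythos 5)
Your high-level intuition is right -- red paths act as $C_k$-threats, the bulk of the work should cost two rounds per blue edge, and the evenness of $k$ is what makes the threat geometry usable -- but the central accounting of your main phase does not hold up, and this is precisely the hard part of the theorem. First, a red answer to an extension move is an edge incident to the current endpoint of the blue path $P$; it can only be "appended to the tail of $R$" if that endpoint is simultaneously the tail of the red path, and after the very next forced blue chord the new endpoint of $P$ moves to a different vertex of $R$, breaking the coincidence. Second, and more fatally, the forced chords of a single red path do not concatenate: if $R=r_1r_2\dots$, the threat pairs are $(r_i,r_{i+k-2})$, and consecutive forced blue edges $r_ir_{i+k-2}$ and $r_{i+1}r_{i+k-1}$ share no endpoint, so the blue edges you extract form $k-2$ vertex-disjoint paths rather than one path; chaining them ($r_1r_{k-1}r_{2k-3}\dots$) uses only every $(k-2)$-th chord and destroys the $2$-rounds-per-edge rate. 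Your proposed fix -- growing $R$ from both ends so that its two endpoints stay matched to the next desired edge of $P$ -- fails as soon as $R$ has more than $k-1$ vertices, because its endpoints are then at red distance greater than $k-2$ and the chord between them is no longer a $C_k$-threat. The paper resolves exactly this synchronization problem differently: it maintains an "icicle path" (a red spine with disjoint blue paths hanging off it) together with one special blue path, and the single edge Builder plays each round goes from the end of the red spine to the end of the special path, so that \emph{either} colour makes a potential function increase by one; the price is that the blue edges end up in as many as $k$ disjoint pieces.

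That price points to the second set of gaps: everything after the main phase. Joining the disjoint blue pieces is not free and cannot be done with a single endpoint-to-endpoint chord; the paper builds a separate red path on $2k-4$ vertices and uses the weaker threat "at least one of the two edges $v_1x$, $v_{k-1}x$ is blue" to merge paths at a cost of $5$ rounds per merge ($\Oh(k)$ total). Closing the final blue path into a cycle also cannot be done by one forced chord, since there is no red $P_{k-1}$ sitting between the two ends of $P$; the paper instead threatens a red $C_k$ drawn alternately between the two end-segments of $P$ (this is where evenness of $k$ is genuinely used), which forces a blue closing edge but may discard up to $k/2$ vertices. Consequently the resulting blue cycle has length only approximately $n$, and an entire additional mechanism (shortening an approximate cycle to exactly $C_n$ in $3k+10$ rounds) is needed to finish. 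Your write-up asserts that the final threat move "yields a blue $C_n$" with no control over the exact length; without a shortening or length-correcting device the argument cannot produce $C_n$ rather than some $C_{n'}$ with $n'\approx n$.
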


\begin{theorem}\label{thm:odd}
    $\rr(C_k,C_n) \le 3n + \log_2 n + 50 k$ for $n\ge 8k$ and odd $k\ge 3$.
\end{theorem}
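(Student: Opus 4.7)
The plan is for Builder to combine three routines so that every round either extends a blue path toward length $n$, contributes to a small red ``odd-cycle gadget'' on a reserved vertex set, or closes the path into a cycle of the desired length. The $3n$ main term pays for about $n$ blue extensions plus up to $2n$ red refusals of them, the $50k$ overhead pays for preparing and then using the gadget (which is where the odd parity of $k$ is handled), and the $\log_2 n$ summand pays for a doubling step used to set the cycle length to exactly $n$ during closure.

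First, using $\Oh(k)$ rounds on a reserved set $S$ of $\Theta(k)$ vertices, Builder forces either a red $C_k$ (and wins immediately) or a red gadget of the form ``a red $(a,b)$-path of length $k-2$ in $S$, together with a small pool of auxiliary vertices'' — any future vertex $v$ that acquires red edges to the endpoints of such a path closes a red $C_k$ of the correct odd length. Next, on $\N\setminus S$ Builder grows a blue path $P$ vertex by vertex: at the current endpoint she proposes edges to fresh vertices until one is colored blue (extending $P$), while every red answer is routed either through the gadget (possibly closing a red $C_k$) or onto a disposable auxiliary vertex that is abandoned afterwards. A careful amortization shows that the number of red answers Builder can suffer before either $|P|=n$ or the gadget triggers is at most $2n+\Oh(k)$, so reaching $|P|=n$ costs at most $3n+\Oh(k)$ edges.

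Finally, Builder turns $P$ into a blue $C_n$ of the \emph{exact} target length. Since a single closing probe may be refused, during Phase~2 she reserves $\Oh(\log n)$ auxiliary endpoint alternatives — short branches grafted onto $P$ at geometrically spaced positions — and then runs a doubling search over these alternatives that uses at most $\log_2 n+\Oh(k)$ further rounds; each failed probe either refines the search or contributes a red edge that the gadget converts into $C_k$. The main obstacle I anticipate is the gadget design: for odd $k$ the Tur\'an number $\mathrm{ex}(N,C_k)=\lfloor N^2/4\rfloor$ allows Painter to keep her red graph bipartite and arbitrarily dense, so Builder cannot force $C_k$ by mere volume of red edges as one would in the even case of Theorem~\ref{thm:even}; instead the gadget must be engineered so that any \emph{single} red edge later placed by Painter into a designated position crosses a would-be red bipartition and closes an odd cycle of length exactly $k$. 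Once the gadget is in place, the counting needed in Phases~2 and~3 is comparatively routine.
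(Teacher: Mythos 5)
Your high-level accounting (about $3$ rounds per vertex of a length-$n$ blue path, plus $O(k)$ for an odd-cycle gadget, plus a $\log_2 n$ search term) matches the shape of the bound, but the proposal has genuine gaps at the two places where the odd case is actually hard. First, your gadget---a red path on $k-1$ vertices whose two endpoints threaten any new vertex---only yields the implication ``of these two edges, at least one is blue.'' That device (it is essentially \Cref{obs:avoid_ck}) is enough to merge many blue paths down to \emph{two}, but it cannot close a blue path into a cycle or merge the last two paths: when Builder offers the two endpoints of the would-be cycle to a threatened vertex, Painter may answer one red and one blue forever, extending a path without ever joining the ends, and your fixed $O(k)$-vertex gadget is exhausted after $O(k)$ such refusals while Painter never completes a red $C_k$. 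This is precisely why the paper introduces \emph{wish triangles} and red paths of length exactly $2$ between the two blue endpoints (\Cref{lem:red_gluepath,lem:gluepath}): the alternating red cycle that realizes the threat must have odd length $k$, which forces an odd-length red connector between the two blue paths, and the wish triangle guarantees either that connector or a blue shortcut. Your closing remark that the gadget ``must be engineered'' so a single red edge crosses a red bipartition is exactly the missing idea, not a proof of it. Second, the amortization claim that Builder suffers at most $2n+\Oh(k)$ red refusals ``before the gadget triggers'' is asserted, not shown; since the gadget is not renewable, Painter is not forced to trigger it. The paper instead makes the red refusals themselves productive by storing them in a growing structure (a red matching, later converted into a \emph{dragon tail}), and the $\log_2 n$ term arises from a binary search along that dragon tail to locate a blue/red transition chord (\Cref{lem:tail2cycle})---not, as you propose, from a doubling search over $O(\log n)$ grafted endpoints to hit length exactly $n$. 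The exact-length adjustment is done separately by shortening an approximate cycle with chord threats in only $O(k)$ rounds (\Cref{thm:shortenall}); your geometric-branch scheme for exact closure is not developed and would itself need an answer to the same odd-closing obstruction. As written, the proposal does not constitute a proof.
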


The multiplicative constant $2$ in \Cref{thm:even} is optimal in view of a result by Cyman, Dzido, Lapinskas and Lo \cite{CDLL} who proved that for every connected graph $H$ Painter has a strategy such within $|V(H)|+|E(H)|-1$ rounds no red cycle is created nor a blue copy of $H$.
Hence $\rr(C_k,C_n)\ge 2n-1$ for every $k\ge 3$.
The best known lower bound in case of odd $k$ was proved by Adamski and Bednarska-Bzdęga \cite{lowerbound2021}.
They showed that for every connected graph $H$ Builder needs at least $\varphi |V(H)| + |E(H)|-2\varphi+1$ rounds (where $\varphi \approx 1.618$ denotes the golden ratio) to force Painter to create a red odd cycle or a blue copy of $H$.
In particular, $\rr(C_k,C_n)\ge (\varphi+1)n-2\varphi+1> 2.6n-3$ for any $n$ and odd $k$.

To summarize, the above lower bounds and \Cref{thm:even,thm:odd} imply that
\[
    2n-1\le\rr(C_k,C_n) \le 2n + 20 k \text{\quad for even $k$ and $n\ge 3k$ \quad and}
\]%
\[
    2.6n-3<\rr(C_k,C_n) \le 3n + \log_2 n + 50 k \text{\quad for odd $k$ and $n\ge 8k$.}
\]

The upper bound on $\rr(C_{2k},C_n)$ proves a conjecture posed in \cite{ABC4} that $\rr(C_{2k},P_n)=2n+o(n)$ for every fixed $k\ge 2$.
The exact value of $\rr(C_{2k},P_n)$ remains open for every $k\ge 3$.
The only exact general result is $\rr(C_{4},P_n)=2n-2$ for $n\ge 8$; the lower bound follows from the bound by Cyman, Dzido, Lapinskas, and Lo, while the upper bound comes from \cite{ABC4}.

For odd $k$ we have the following upper bound, slightly better than an immediate consequence of \Cref{thm:odd}.

\begin{theorem}\label{thm:oddpath}
    $\rr(C_k,P_n) \le 3n + 50k$ for every $n$ and odd $k\ge 3$.
\end{theorem}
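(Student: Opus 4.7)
The plan is to view \Cref{thm:oddpath} essentially as the path-building portion of \Cref{thm:odd} with the final cycle-closing phase removed. The $50k$ additive constant already appears in \Cref{thm:odd}, while the additional $\log_2 n$ term and the restriction $n\ge 8k$ are both artifacts of the step in which the long blue path is closed into a blue $C_n$. Since here only the path is needed, Builder simply halts its strategy at the moment a blue $P_n$ is produced, inheriting the bound $3n+50k$ with no lower bound on $n$.

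Concretely, I would design Builder's strategy in two stages. In a setup stage costing $\Oh(k)$ rounds, Builder constructs an auxiliary gadget (a ``buffer'') on $\Oh(k)$ vertices whose role is to soak up red edges Painter creates in such a way that Painter can never complete a red $C_k$. In the main stage, Builder repeatedly extends the current blue path $P_m$ to $P_{m+1}$ by the following probing routine: from the current endpoint $v$, play an edge $vu$ for a fresh vertex $u$; if Painter colors the edge blue, the path grows at cost one, otherwise play one or two further edges---to another fresh vertex $w$ and possibly a compensating edge back to $v$---and argue case by case, either extending the path by a new vertex via a blue edge generated among the probes, or relocating the accumulated red edges into the buffer so that they contribute no progress toward a red $C_k$. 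A careful amortization shows that each new path vertex costs on average at most three rounds, yielding $3n$ rounds for the main stage and $3n+50k$ in total.

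The principal obstacle is the same one that dictates the structure of the proof of \Cref{thm:odd}: for odd $k\ge 5$, Painter is permitted to build red odd cycles of length strictly less than $k$, so the convenient invariant that the red graph be bipartite is unattainable. Instead the buffer must be engineered to control the red distances between vertices that Builder may later reconnect, so that no red walk of length exactly $k-1$ can be closed into a red $C_k$. This is where essentially all of the $50k$ overhead is paid, and it is the most delicate part of the argument. Stopping at $P_n$ rather than continuing to $C_n$ makes no difference to this portion, which is precisely why the bound here is $3n+50k$ for every $n$ instead of $3n+\log_2 n+50k$ for $n\ge 8k$; for the trivial regime $n=\Oh(k)$, the bound is absorbed into the $50k$ term and requires no separate treatment.
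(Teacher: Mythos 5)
Your opening paragraph correctly identifies the route the paper actually takes: the proof of \Cref{thm:oddpath} is obtained by running Stages I and II of the proof of \Cref{thm:odd} and stopping as soon as a blue $P_n$ appears, so that Stage III (the only place $n\ge 8k$ is used) is dropped and the binary search of \Cref{lem:tail2cycle} (the source of the $\log_2 n$ term) is unnecessary because a dragon tail already contains a long blue path along its spine. Had you stopped there and filled in the bookkeeping, you would have essentially the paper's argument.

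However, the ``concrete'' strategy you then substitute is not that argument, and it has a genuine gap. Your main stage extends the blue path one vertex at a time by probing from the current endpoint, with a buffer that ``soaks up'' Painter's red edges so that ``they contribute no progress toward a red $C_k$.'' This gets the mechanism backwards: Builder's only leverage over Painter is the \emph{threat} of completing a red $C_k$, so red edges must be accumulated into structures that create such threats (the red matching of Stage I, converted via \Cref{lem:redmatching} into a dragon tail whose spine is a blue path, with \Cref{lem:gluepath} and a wish triangle used to glue blue pieces since the odd-$k$ analogue of \Cref{lem:join_2_paths} needs one). If the red edges are merely neutralized, Painter can answer red forever and the blue path never grows. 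Consequently your central claim --- that each new path vertex costs at most three rounds on average --- is asserted without any invariant or potential function to support it, and in view of the $2.6n$ lower bound of Adamski and Bednarska-Bzd\k{e}ga it cannot follow from a generic case analysis; the factor $3$ in the paper arises from the specific accounting $\bigl(m+\tfrac32 t\bigr)+2m\le\tfrac32(2m+t)\approx 3n$, where $2m+t$ counts the vertices accumulated in Stage I. You also never address how two blue paths are merged into one for odd $k$, which is exactly the role of the wish triangles and is where much of the $\Oh(k)$ overhead is actually spent.
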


Thus, in view of the lower bound by Adamski and Bednarska-Bzdęga, for odd $k$ we have
\[
    2.6n-4\le \rr(C_k,P_n)\le 3n + 50k.
\]
For $k=3$ a better upper bound $3n-4$ holds for every $n\ge 3$ \cite{lowerbound2021}.

Our paper is organized as follows.
We present the proofs of the main two \Cref{thm:even,thm:odd} in \Cref{sec:even_buidler_strategy,sec:odd_buidler_strategy}, respectively.
In both sections, we use a general tool for shortening long cycles; this tool is shown in \Cref{sec:approx}.
We do not focus on optimizing the multiplicative constant of $k$ in \Cref{thm:even,thm:odd,thm:oddpath} as minor improvements lead to arguments that are significantly more complex.

\section{Preliminaries}

A graph $H$ is a tuple of vertices $V(H)$ and edges $E(H)$.
Let $v(H)$ and $e(H)$ denote the number of its vertices and edges, respectively.
By $P_m$ we mean a path on $m\ge 1$ vertices.
The \emph{line forest} $L^{(t)}_m$ is a graph on $m\ge 1$ vertices with $t\ge 1$ components where every component is a path.
We denote by ${\mathcal L}^{(\le t)}_m$ the family of all line forests on $m$ vertices with at most $t$ components.

We say a graph is colored if every edge is colored blue or red.
A graph is red if all its edges are red; analogously we define a blue graph.

Let ${\mathcal G}_1$ and ${\mathcal G}_2$ be nonempty families of finite graphs.
The online Ramsey game $\RR({\mathcal G}_1, {\mathcal G}_2)$ is played between Builder and Painter on an infinite board $K_{\mathbb N}$, i.e., an infinite complete graph.
In every round, Builder chooses a previously unselected edge of $K_{\mathbb N}$ and Painter colors it red or blue.
The game ends if after Painter's move there is a red copy of a graph from ${\mathcal G}_1$ or a blue copy of a graph from ${\mathcal G}_2$.
Builder tries to finish the game as soon as possible, while Painter aims to delay Builder's win for as long as possible.
Let $\rr({\mathcal G}_1, {\mathcal G}_2)$ be the minimum number of rounds in the game $\RR({\mathcal G}_1, {\mathcal G}_2)$, provided both players play optimally.
If ${\mathcal G}_i$ consists of one graph $G_i$, we simply write $\rr(G_1, G_2)$ and $\RR(G_1, G_2)$.

Given a colored graph $H$, we also consider a version of the game $\RR({\mathcal G}_1, {\mathcal G}_2)$ where the initial board $K_{\mathbb N}$ already contains $H$.
We denote this game by $\RR_H({\mathcal G}_1, {\mathcal G}_2)$ and the minimum number of rounds Builder needs to achieve his goal in $\RR_H({\mathcal G}_1, {\mathcal G}_2)$ is denoted by $\rr_H({\mathcal G}_1,{\mathcal G}_2)$.
Clearly $\rr_H({\mathcal G}_1,{\mathcal G}_2)\le \rr({\mathcal G}_1,{\mathcal G}_2)$ and $\RR({\mathcal G}_1, {\mathcal G}_2)$ is equivalent to $\RR_H({\mathcal G}_1, {\mathcal G}_2)$ with an empty graph $H$.
Any game $\RR_H({\mathcal G}_1, {\mathcal G}_2)$ will be called shortly a Ramsey game.

Given a partially played of a Ramsey game, by the \emph{host graph} we mean the colored graph induced by the set of all red and blue edges already drawn on the board.
Any vertex that is not incident to any colored edge is called a \emph{free vertex}, and an edge incident to only free vertices is called a \emph{free edge}.

While considering a partially played Ramsey game, we say that Builder can \emph{force a colored graph} $F$ within $t$ rounds, if Builder has a strategy such that after at most $t$ more rounds the host graph contains a copy of $F$.

Throughout the paper we use an idea by Grytczuk, Kierstead, and Pra{\l}at \cite{Grytczuk2008} of forcing two colored paths in a Ramsey game.

\begin{lemma}[\cite{Grytczuk2008}, Theorem~2.3]\label{lem:gryt}
    Let $k \ge 1$.
    In every Ramsey game within $2k - 1$ rounds Builder can force Painter to create two vertex disjoint monochromatic paths: the red one and the blue one, the sum of whose lengths is equal to $k$.
\end{lemma}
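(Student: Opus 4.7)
My plan is to proceed by induction on $k$. The base case $k=1$ is handled in a single round: Builder plays any edge $uv$, Painter colors it (say red), and the pair consisting of the red path $uv$ (length $1$) together with any isolated free vertex acting as a length-$0$ blue ``path'' gives vertex-disjoint monochromatic paths whose lengths sum to $1$.

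For the inductive step I would assume the claim for $k-1$: after $2k-3$ rounds Builder has secured disjoint monochromatic paths $R$ (red, length $r$) and $B$ (blue, length $b$) with $r+b = k-1$. The goal is then to use two additional rounds to raise the sum to $k$. Let $x$ be an endpoint of $R$, $y$ an endpoint of $B$, and let $v$ be a free vertex (available since $K_{\mathbb{N}}$ is infinite). Builder first plays $vx$. If Painter responds in red, then $R\cup\{vx\}$ is a red path of length $r+1$ still disjoint from $B$, and we finish in one of the two extra rounds. If Painter responds in blue, Builder plays $vy$ in the second extra round; if Painter colors this blue, then $B\cup\{vy\}$ is a blue path of length $b+1$ disjoint from $R$ (the leftover blue edge $vx$ does not interfere, as $v$ is not in $R$), again giving total length $k$.

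The main obstacle is the remaining ``subtle'' case, where $vx$ is blue but $vy$ is red. Here neither chosen path has grown in the step, but $v$ becomes a useful \emph{hub} connected to $R$ via a blue edge and to $B$ via a red edge. To handle this I would strengthen the inductive invariant so that Builder records such hubs as deferred potential, each convertible into a $+1$ gain using a single later round: for instance, the move $xy$ produces either a red path $R+xy+yv$ of length $r+2$ (when $xy$ is red, trimming $y$ off $B$) or a blue path $B+yx+xv$ of length $b+2$ (when $xy$ is blue, trimming $x$ off $R$), giving a net gain of one in the sum of lengths regardless of Painter. The hard part, and the technical heart of the argument in \cite{Grytczuk2008}, is the amortized accounting that combines direct one-round extensions, two-round extensions, and hub conversions into a strategy fitting within the $2k-1$ round budget; I would design the invariant carefully so that every ``wasted'' pair of rounds leaves behind exactly enough structure to be cashed in during subsequent plays.
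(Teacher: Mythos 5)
Your plan correctly isolates the one problematic case, but the gap you flag at the end is genuine and is not closable by the amortization you sketch. With your move order (fresh vertex $v$, then $vx$, then $vy$), Painter can answer $vx$ blue and $vy$ red in \emph{every} step; each such step spends two rounds with zero gain and leaves a hub whose conversion via $xy$ costs a third round for a gain of one. Nothing reusable survives the conversion (once $xy$ is colored, $v$ becomes the new tip and the leftover off-color edge sits inertly inside a path), so the worst case is three rounds per unit of length, roughly $3k$ rounds in total rather than $2k-1$. No bookkeeping invariant rescues this ordering, because the ``wasted'' pair of rounds leaves behind exactly one hub, which is worth exactly one round of savings, not two.

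The fix --- and this is essentially the argument in \cite{Grytczuk2008} --- is to reverse the order of your two move types: in each step Builder \emph{first} plays the edge $xy$ joining the current endpoints $x$ of $R$ and $y$ of $B$, and only then plays one edge to a fresh vertex $v$ from whichever endpoint absorbed $xy$. Concretely, if $xy$ comes back red, Builder plays $yv$: if $yv$ is red he takes the red path $R+xy+yv$ (length $r+2$) together with $B-y$ (length $b-1$); if $yv$ is blue he takes the original $R$ together with $B+yv$ (length $b+1$). Either way the sum of lengths grows by exactly one, disjointness is preserved, and the step costs exactly two rounds; the case where $xy$ is blue is symmetric (play $xv$ next). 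With your one-round base case this gives $1+2(k-1)=2k-1$ with no bad branch and no amortization. Your hub-conversion computation is correct as far as it goes; the point is that committing the edge $xy$ before spending a fresh vertex makes the hub situation impossible.
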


The strategy of Builder in the proof of the above lemma is quite simple and can be applied to extending a blue path that is already present in the host graph, provided Painter avoids a red path of a given length.
Thus we obtain the following corollary.

\begin{corollary}\label{cor:gryt2}
Let $k,m \ge 1$ and let $H$ be a blue path with $0\le e(H)< k$.
Assume that in a Ramsey game the host graph contains $H$.
Then Builder has a strategy such that either after $2(k-e(H)+m-1)-1$ rounds the host graph contains a blue path of length $k$ or for some $j<k$ after $2(j-e(H)+m)-1$ rounds the host graph contains two vertex disjoint monochromatic paths: the red one of length $m$ and the blue one of length $j$.
\end{corollary}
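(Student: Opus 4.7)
The plan is to apply Builder's strategy from \Cref{lem:gryt} while initializing the ``blue path'' being maintained to be $H$ itself (rather than an empty graph); the red path is initialized as empty. Because the Grytczuk-Kierstead-Pra{\l}at strategy only ever plays edges between the current free endpoint of one of the maintained monochromatic paths and a fresh vertex of the infinite board, the presence of $H$ on the board does not interfere with its mechanics, and the invariant that the two maintained paths are vertex-disjoint is preserved throughout. Writing $b_t$ and $r_t$ for their respective lengths after round $t$ (so $b_0=e(H)$ and $r_0=0$), the same counting used to prove \Cref{lem:gryt} yields the growth bound: for every integer $s\ge 1$, the first round at which $b_t+r_t\ge e(H)+s$ is at most $2s-1$.

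Builder would then stop at the first round $\tau$ at which either $b_\tau\ge k$ or $r_\tau\ge m$. Because at most one edge is colored per round, only one of these two inequalities becomes true for the first time at round $\tau$. If $b_\tau=k$, then $r_\tau\le m-1$ and round $\tau$ is, by monotonicity of the sum $b_t+r_t$, the first round at which that sum reaches $k+r_\tau$; the growth bound then gives $\tau\le 2(k+r_\tau-e(H))-1\le 2(k-e(H)+m-1)-1$, the first alternative of the corollary. If instead $r_\tau=m$ and we set $j:=b_\tau<k$, then analogously $\tau$ is the first round at which the sum reaches $j+m$, so $\tau\le 2(j+m-e(H))-1=2(j-e(H)+m)-1$, the second alternative.

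The one delicate point in this plan is the ``bootstrapping'' claim of the first paragraph: namely, that the Grytczuk-Kierstead-Pra{\l}at strategy, started with its blue path set equal to $H$, retains its original sum-growth guarantee. This is precisely what the remark preceding the statement asserts, and it holds because the strategy's moves and its analysis depend only on the current free endpoints of the two maintained paths and on the availability of fresh vertices on $K_{\mathbb N}$, neither of which is affected by having $H$ already on the board. Once this is granted, the remainder of the argument is the elementary bookkeeping above; I do not expect any further obstacle.
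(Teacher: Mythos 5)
Your proposal is correct and matches the paper's (implicit) proof: the paper justifies the corollary only by the remark that the Grytczuk--Kierstead--Pra{\l}at strategy from \Cref{lem:gryt} can be run with the maintained blue path initialized to $H$, which is exactly your bootstrapping step, and your stopping-time bookkeeping (the sum of lengths reaches $e(H)+s$ within $2s-1$ rounds, stop when the blue path hits $k$ or the red path hits $m$) is the intended derivation of both bounds.
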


All our proofs are based on the following, quite obvious, observation which we use implicitly.

\begin{lemma}[Triangle inequality for Ramsey games]\label{lem:triangleinequality} Let $F$, $G$, and $H$ be (uncolored) graphs, $F_b$ be the blue copy of $F$ and $C$ be any colored graph.
Then
\[
    \rr_C(G,H)\le\rr_C(G,F)+\rr_{F_b}(G,H).
\]
In particular, if $C$ is an empty graph, then
\[
    \rr(G,H)\le\rr(G,F)+\rr_{F_b}(G,H).
\]
\end{lemma}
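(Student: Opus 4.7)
The plan is for Builder to play two strategies back to back. In phase~1, starting from the board $C$, Builder follows his optimal strategy for $\RR_C(G,F)$. This takes at most $\rr_C(G,F)$ rounds and yields a host graph that contains either a red copy of $G$ (in which case $\RR_C(G,H)$ is already won) or a blue copy of $F$, which we identify with $F_b$. In the latter case we enter phase~2.

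In phase~2 Builder wants to run his optimal strategy $S$ for $\RR_{F_b}(G,H)$, but the real board is now $F_b$ together with the extra colored edges left over from phase~1, not $F_b$ alone. Builder resolves this by running $S$ as a parallel simulation whose initial board is $F_b$ placed on the vertices of the actual blue copy of $F$. When $S$ calls for an edge $e$: if $e$ is still uncolored in the real game, Builder plays $e$ and reports Painter's true response back to $S$; if $e$ has already received a color during phase~1, Builder skips the round and simply reports that color to $S$ as Painter's response. Since $S$ is a strategy in the formal sense, it must handle any sequence of Painter's moves, so the simulation terminates within $\rr_{F_b}(G,H)$ simulated rounds with a red $G$ or a blue $H$ present in the simulated board.

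Each simulated round costs at most one real round, so phase~2 uses at most $\rr_{F_b}(G,H)$ real rounds; and since the real board always contains the simulated board as a coloured subgraph, the red $G$ or blue $H$ appears on the real board as well. Summing the two phases gives $\rr_C(G,H)\le\rr_C(G,F)+\rr_{F_b}(G,H)$, and the displayed special case with $C$ empty follows directly by specialisation.

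The one point that requires care is that phase~1 leaves behind coloured edges outside $F_b$ which a priori could obstruct the strategy $S$. The simulation trick dispenses with this by treating each pre-existing colour as if Painter had chosen it during the simulated game; extra edges can only speed Builder up, never slow him down. This monotonicity is presumably why the authors regard the lemma as \emph{quite obvious} and use it implicitly throughout the paper.
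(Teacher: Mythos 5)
Your proof is correct and is exactly the concatenation argument the authors have in mind; the paper itself gives no proof, dismissing the lemma as ``quite obvious.'' Your extra care with the simulation trick for edges already colored in phase~1 (skipping the real round and feeding the pre-existing color to the strategy as Painter's answer) is the right way to make the ``obvious'' step rigorous, since Builder may not reselect an already colored edge.
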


\section{Approximate cycles}\label{sec:approx}

The key realization of Builder's strategy in $\RR(C_k,C_n)$ presented in further sections will be forcing a long blue cycle, sometimes much longer than $n$.
Therefore, we need a tool to shorten it, provided that Painter never creates a red $C_k$.
The goal of this section is to prove the following theorem.

\begin{theorem}\label{thm:shortenall}
    Let $n>3k-3$, $0\le h\le 2n$, and let $H$ be the blue cycle of length $n+h$.
    Then
    \[
        \rr_{H}(C_k,C_n)\le 3k+10.
    \]
\end{theorem}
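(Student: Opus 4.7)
If $h=0$, $H$ is already a blue $C_n$, so assume $h\ge 1$ and label $H$'s vertices as $v_0,v_1,\ldots,v_{n+h-1}$ in cyclic order. The key observation is that every chord $v_iv_j$ with $j-i\equiv h+1\pmod{n+h}$ is a \emph{one-edge threat}: coloring it blue, together with the arc of $H$ of length $n-1$, would complete a blue $C_n$, so Painter must color it red. A secondary \emph{two-edge threat} is available through a new vertex $w$ already blue-adjacent to some $v_i$: the edge $wv_j$ with $j-i\equiv h+2\pmod{n+h}$ is forced red because blue would close a blue $C_n$ via the length-$2$ path $v_iwv_j$ and the arc of $H$ of length $n-2$.

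Starting at $v_0$, Builder grows a red walk on $H$ by repeatedly playing one-edge threats; each is forced red and advances the walk by a jump of $\pm(h+1)$ along $H$. After $k-1$ forced red jumps the walk has $k$ vertices. Builder then plays the closing chord between its two endpoints, which is itself arranged to be a one-edge threat, so Painter either concedes a blue $C_n$ (by coloring it blue) or completes a red $C_k$ (by coloring it red).

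The crucial step, and the main obstacle, is arranging the walk so that its endpoints lie at cyclic distance exactly $h+1$ on $H$. With only $\pm(h+1)$ jumps, a self-avoiding walk of length $k-1$ on the cyclic sublattice generated by $h+1$ is forced to be monotone, yielding a net displacement of $\pm(k-1)(h+1)$ modulo $n+h$, which equals $\pm(h+1)$ only in degenerate arithmetic cases. To sidestep this, Builder interleaves a small number of two-edge threats, each of which adds a $\pm(h+2)$ jump through a new vertex; since $\gcd(h+1,h+2)=1$, every residue modulo $n+h$ is obtainable from a short integer combination of $h+1$ and $h+2$, and a short case analysis on the parity of $k$ shows that $O(1)$ such adjustments always suffice to steer the walk to the correct endpoint. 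Each one-edge threat costs one round and each two-edge threat costs two (a setup round plus a forced-red threat); adding the closing move and a constant overhead, the total stays within $3k+10$.
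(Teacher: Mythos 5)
Your overall plan---force a red path of $k-1$ ``threat'' chords and close it with a final chord that is simultaneously a red-$C_k$ completion and a blue-$C_n$ threat---is a reasonable inversion of the usual argument, and the one-edge threats at cyclic distance $h+1$ are correctly identified. But the mechanism you rely on to steer the walk has a genuine gap. Your two-edge threat does not produce a usable red edge: the edge that is forced red is $wv_j$, whose other endpoint $w$ is joined to the walk's current endpoint $v_i$ only by the \emph{blue} edge $v_iw$. So the forced red edge is not incident to the red path you are building, and the jump by $h+2$ never becomes part of a red $C_k$. Worse, you cannot even guarantee the setup: Painter may color $v_iw$ red, after which no chord from $w$ carries any blue-$C_n$ threat and Builder is stranded. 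Without these adjustment moves, a self-avoiding walk with steps $\pm(h+1)$ is monotone (as you note), its net displacement is $(k-1)(h+1)$, and the closing chord is a threat only in degenerate arithmetic cases. There is a second, independent problem: the $k$ vertices of the walk need not be distinct at all, since they all lie in the coset $\{m(h+1)\bmod (n+h)\}$, and when $(n+h)/\gcd(h+1,n+h)\le k-1$ the monotone walk returns to its start too early, producing a short red cycle of the wrong length rather than a red $C_k$. Finally, the claim that ``$O(1)$ adjustments always suffice'' is asserted from $\gcd(h+1,h+2)=1$ but never reconciled with the self-avoidance constraint.

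For contrast, the paper never tries to force colors at all for general $h$: it presents Painter with small gadgets in which \emph{every} response makes measurable progress, reducing $h$ in controlled decrements (Lemma~\ref{lem:shortenbluecycle} cuts a long cycle roughly in half in $k+1$ rounds, Lemma~\ref{lem:shortenbluecyclebyOk} runs a potential argument to reach $C_n$ or $C_{n+1}$, and Lemma~\ref{lem:shortenbluecyclebyone} handles the last unit). The forced-red-chord idea you use does appear there, but only in the case $h=1$, where the threat chords have span $2$, the walk is trivially self-avoiding and monotone, and the closing long chord is secured by two initial probe edges rather than by arithmetic steering. To repair your argument you would need an honest device for forcing red chords of a second span incident to the walk's endpoint, plus a treatment of the case where $h+1$ has small additive order modulo $n+h$; as written, the proof does not go through.
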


We start with the case $h=1$.
\begin{lemma}\label{lem:shortenbluecyclebyone}
    Let $n>3k-3$ and $H$ be a blue cycle of length $n+1$.
    Then
    \[
        \rr_{H}(C_k,C_n)\le k+2.
    \]
\end{lemma}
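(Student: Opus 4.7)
\medskip
\noindent\textbf{Proof plan.} Label the vertices of $H$ cyclically as $v_0,v_1,\ldots,v_n$. The key observation is a \emph{shortcut principle} for $H$: any blue chord $v_iv_{i+2}$, combined with the long arc $v_{i+2}v_{i+3}\cdots v_i$ of length $n-1$, completes a blue $C_n$. More generally, a blue path of length $j-1$ between cycle vertices $v_i$ and $v_{i+j}$ whose internal vertices are off the cycle, together with the complementary arc of length $n+1-j$, again completes a blue $C_n$ (the hypothesis $n>3k-3$ ensures the arcs we use stay disjoint from the extra vertices we introduce). Equivalently, if Painter never builds such a shortcut, then every distance-$2$ chord Builder draws, and every ``blue two-edge crossing'' $v_i u v_{i+3}$, is forced to be red.

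My plan is to fix an anchor pair $v_0,v_2$ at distance $2$ on $H$ and play $v_0v_2$ in round~$1$: Painter's response must be red, otherwise Builder wins immediately. In rounds $2$ through $k$ Builder then plays, one after another, the edges $v_0u_1,\,u_1u_2,\,\ldots,\,u_{k-2}v_2$ of a path of length $k-1$ through fresh free vertices $u_1,\ldots,u_{k-2}$. If Painter colours all of them red, then together with the red chord $v_0v_2$ they form a red $C_k$ in $k\le k+2$ rounds. The two slack rounds are reserved for repairing a single blue edge on this path: if some $u_iu_{i+1}$ (or an endpoint edge) was coloured blue, Builder introduces an extra free vertex $w$ and plays the two detour edges around the blue one; if both are red, Builder closes a red $C_k$ of length $k$ using $w$ in place of the broken vertex, in exactly $k+2$ rounds in total.

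The main obstacle I expect is the case in which Painter colours two or more edges of the main path blue, since a single two-round detour can repair only one of them. The way I would resolve this is to prescribe the main path so that any second blue edge automatically falls into one of the ``generalized shortcut'' patterns above---for instance by positioning two of the $u_i$ near cycle vertices at distance $3$ so that a pair of blue edges must include a shortcut $v_i\,u\,v_{i+3}$, which by the principle completes blue~$C_n$. The condition $n>3k-3$ gives enough disjoint cycle arcs to arrange these shortcut patterns, the original anchor, and the detour vertex without collisions, so every leaf of the game tree terminates in a red~$C_k$ or a blue~$C_n$ within $k+2$ rounds. Getting the exact choice of anchors, free vertices, and detour to interlock so that every pair of Painter's blue moves lands in one of these patterns is the delicate book-keeping that the full proof has to carry out.
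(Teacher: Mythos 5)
There is a genuine gap, and it is the one you flag yourself without resolving: your main path $v_0u_1u_2\cdots u_{k-2}v_2$ runs through fresh off-cycle vertices, and a blue edge between two such vertices costs Painter nothing. A blue path of length $\ell$ from $v_0$ to $v_2$ through off-cycle vertices closes a blue cycle of length $\ell+(n-1)$, which equals $n$ only for $\ell=1$; since your path has length $k-1$, no blue sub-configuration of it (nor of the two-edge detour through $w$) ever produces a blue $C_n$. Painter can therefore colour every internal edge $u_iu_{i+1}$ --- indeed every edge you name after round~1 --- blue with impunity, and the game does not end within $k+2$ rounds. The proposed repair, positioning some $u_i$ so that a second blue edge completes a shortcut $v_i\,u\,v_{i+3}$, cannot save this: a two-edge blue shortcut requires two consecutive path edges both incident to cycle vertices at arc-distance $3$, whereas all but the first and last edges of your path are disjoint from the cycle. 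To make every Painter response costly you would have to replace essentially all of the $u_i$ by cycle vertices, which is a different construction.

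That different construction is what the paper uses. Its red-$C_k$ threat is routed along distance-$2$ chords of $H$ itself: Builder first plays one or two long chords, e.g.\ $v_0v_{2k-1}$ and $v_{k-1}v_{3k-4}$ for odd $k$. If Painter colours both red, the zig-zags $v_0v_2v_4\cdots v_{k-1}$ and $v_{2k-1}v_{2k+1}\cdots v_{3k-4}$ close a red $C_k$ in $k-2$ further rounds, and each of those distance-$2$ chords, if coloured blue, immediately yields a blue $C_n$ (your own shortcut principle). If instead Painter colours a long chord such as $v_0v_{2k-1}$ blue, Builder answers with the parallel chord $v_2v_{2k}$: if it is blue, the two chords $v_pv_{p+q}$ and $v_{p+2}v_{p+q+1}$ already close a blue $C_n$; if it is red, it spans arc-distance $2k-2$ and the distance-$2$ zig-zag between its endpoints finishes in $k-1$ rounds. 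Every edge Builder plays is a genuine double threat, which is precisely what your off-cycle path lacks.
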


\begin{proof}
    Let $v_0, v_1, \dots, v_n$ denote the vertices of the blue cycle $H$.
    Let $v_{n+1+x}=v_x$ for every $x$.
    We begin with three easy observations that hold for $q<n$ and any $p$.
    \siamver{\begin{enumerate}}{\begin{enumerate}[(1)]}
        \item\label{short0}
            If a blue edge $v_p v_{p+2}$ is added to $H$, then a blue $C_n$ appears in the host graph.
        \item\label{short1}
            If we add two blue edges $v_p v_{p+q}$ and $v_{p+2} v_{p+q+1}$ to the cycle $H$, then again we have a blue $C_n$.
            See \Cref{fig:observation_short_2}.
        \item\label{short2}
            If a red edge $v_p v_{p+2k-2}$ is added to $H$, then Builder can finish the game within at most $k-1$ rounds by selecting all edges of the path $v_p v_{p+2}v_{p+4}\dots v_{p+2k-2}$.
    \end{enumerate}
    \newcommand{\drawbluepathexample}[1]{
        \pgfmathtruncatemacro\nodeaddon{#1+1}
        \pgfmathtruncatemacro\edgesmone{#1-2}
        \node[hide] (0) at (0,0) {};
        \node[hide] (\nodeaddon) at (\nodeaddon,0) {};
        \foreach \i in {1,...,#1} {
            \node (\i) at (\i,0){};
        }
        \foreach \i in {0,...,#1} {
            \pgfmathtruncatemacro\j{\i+1}
            \draw[blue] (\i) -- (\j);
        }
    }
    \begin{figure}[ht]
        \centering
        \def\nodes{16}
        \begin{tikzpicture}[scale=0.6]
            \tikzstyle{every node}=[main]
            \begin{scope}
                \drawbluepathexample{\nodes}
                \node[hide,label={[shift={(0,-3mm)},rectangle,anchor=north,inner sep=0]$v_{p}$}] at (1) {};
                \node[hide,label={[shift={(0,-3mm)},rectangle,anchor=north,inner sep=0]$v_{p+2}$}] at (3) {};
                \node[hide,label={[shift={(-1mm,-3mm)},rectangle,anchor=north,inner sep=0]$v_{p+q}$}] at (15) {};
                \node[hide,label={[shift={(3mm,-3mm)},rectangle,anchor=north,inner sep=0]$v_{p+q+1}$}] at (16) {};
                \draw[blue] (1) .. controls (2,3) and (14,2) .. (15);
                \draw[blue] (3) .. controls (4,2) and (15,3) .. (16);
                \draw[blue,selected] (1) .. controls (2,3) and (14,2) .. (15);
                \draw[blue,selected] (3) .. controls (4,2) and (15,3) .. (16);
                \draw[blue,selected] (0) to (1);
                \draw[blue,selected] (16) to (17);
                \foreach \i in {3,4,...,14} {
                    \pgfmathtruncatemacro\j{\i+1}
                    \draw[blue,selected] (\i) to (\j);
                }
            \end{scope}
        \end{tikzpicture}
        \caption{
            Example of Observation (\ref{short1}) for $q=14$.
        }%
        \label{fig:observation_short_2}
    \end{figure}
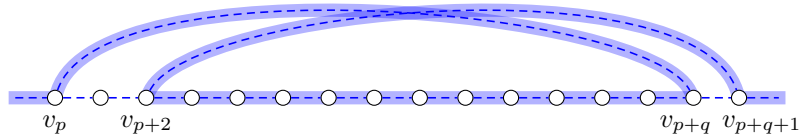

    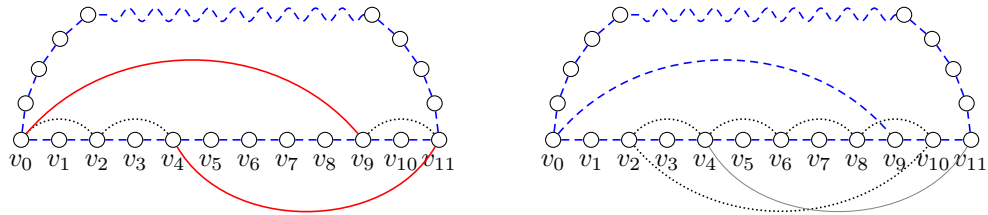
\begin{figure}[ht]
        \centering
        \def\nodes{11}
        \newcommand{\bluecycleexample}{
            \pgfmathtruncatemacro\nodemone{\nodes-1}
            \foreach \i in {0,...,\nodes} {
                \node[label={[shift={(0,-3mm)},rectangle,anchor=north,inner sep=0]$v_{\i}$}] (v\i) at (\i,0){};
            }
            \foreach \i in {0,...,\nodemone} {
                \pgfmathtruncatemacro\j{\i+1}
                \draw[blue] (v\i) -- (v\j);
            }
            \begin{scope}[shift={(7cm,0)}]
                \foreach \i in {1,...,4} {
                    \node (r\i) at (14*\i:4) {};
                }
                \foreach \i in {1,...,3} {
                    \pgfmathtruncatemacro\j{\i+1}
                    \draw[blue] (r\i) -- (r\j);
                }
            \end{scope}
            \begin{scope}[shift={(4cm,0)}]
                \foreach \i in {1,...,4} {
                    \node (l\i) at (180-14*\i:4) {};
                }
                \foreach \i in {1,...,3} {
                    \pgfmathtruncatemacro\j{\i+1}
                    \draw[blue] (l\i) -- (l\j);
                }
            \end{scope}
            \draw[blue] (v\nodes) -- (r1);
            \draw[blue] (v0) -- (l1);
            \draw[blue,longpath] (r4) -- (l4);
        }
        \begin{tikzpicture}[scale=0.5]
            \tikzstyle{every node}=[main]
            \begin{scope}
                \bluecycleexample
                \draw[tobe] (v0) edge[bend left=50] (v2);
                \draw[tobe] (v2) edge[bend left=50] (v4);
                \draw[red]  (v0) edge[bend left=50] (v9);
                \draw[tobe] (v9) edge[bend left=50] (v11);
                \draw[red]  (v4) edge[bend right=60] (v11);
            \end{scope}
            \begin{scope}[shift={(14cm,0)}]
                \bluecycleexample
                \draw[blue]  (v0) edge[bend left=50] (v9);
                \draw[tobe] (v2) edge[bend right=50] (v10);
                \draw[tobe] (v2) edge[bend left=50] (v4);
                \draw[tobe] (v4) edge[bend left=50] (v6);
                \draw[tobe] (v6) edge[bend left=50] (v8);
                \draw[tobe] (v8) edge[bend left=50] (v10);
                \draw[gray] (v4) edge[bend right=60] (v11);
            \end{scope}
        \end{tikzpicture}
        \caption{
            Two cases of Builder's strategy in \Cref{lem:shortenbluecyclebyone} for $k=5$.
            \textbf{Left:} Painter colors both $v_0v_{2k-1}$ and $v_{k-1}v_{3k-4}$ red.
            \textbf{Right:} Painter colors $v_0v_{2k-1}$ blue.
            In both cases Builder chooses the dotted edges.
            If all of them are red, then red $C_k$ appears.
            Otherwise there will be a blue $C_n$.
        }%
        \label{fig:shortenings}
    \end{figure}

    Now we divide the problem based on the parity of $k$.
    \begin{itemize}
        \item $k$ is odd.

        Builder begins the game $\RR_{H} (C_k,C_n)$ with selecting two edges $v_0 v_{2k-1}$ and $v_{k-1} v_{3k-4}$.
        \begin{itemize}
            \item 
        If Painter colors both edges red, Builder can finish the game by selecting all edges of the paths $v_0 v_2 v_4 \dots v_{k-1}$ and $v_{2k-1} v_{2k+1} v_{2k+3} \dots v_{3k-4}$.
        If any of these edges is blue, then there is a blue $C_n$; otherwise there is a red $C_k$.
            \item
        If Painter colors $v_0 v_{2k-1}$ blue, then Builder chooses $v_2 v_{2k}$.
        If Painter colors it blue, the game ends immediately by (\ref{short1}).
        If $v_2 v_{2k}$ is red, Builder can end the game within $k-1$ rounds using (\ref{short2}).
            \item
        In the remaining case, Painter colors $v_{k-1} v_{3k-4}$ blue.
        Then Builder chooses $v_{k-3} v_{3k-5}$ and again, using (\ref{short1}) or (\ref{short2}), he can finish the game within at most $k-1$ rounds.
        \end{itemize}
        \item $k$ is even.

        The proof is very similar to the odd case. Builder begins by selecting edges $v_0 v_{2k-3}$ and $v_{k-2} v_{3k-5}$.
        \begin{itemize}
        \item 
        If Painter colors both edges red, then Builder selects all edges of the paths $v_0 v_2 v_4 \dots v_{k-2}$ and $v_{2k-3} v_{2k-1} \dots v_{3k-5}$.
        If any of these edges is colored blue, then there is a blue $C_n$; otherwise there is a red $C_k$ in the host graph.
        \item 
        If $v_0 v_{2k-3}$ is blue, then Builder chooses $v_1 v_{2k-1}$. If Painter colors it blue, the game ends immediately by (\ref{short1}).
        If $v_1 v_{2k-1}$ is red, Builder can end the game within $k-1$ rounds using (\ref{short2}).
        \item 
        In the remaining case, Painter colors $v_{k-2} v_{3k-5}$ blue.
        Then Builder chooses $v_{k-1} v_{3k-3}$ and again, using (\ref{short1}) or (\ref{short2}), he can finish the game within at most $k-1$ rounds.
        \siamver{}{\qedhere}
        \end{itemize}
    \end{itemize}
\end{proof}

The following lemma will be useful in the proof of \Cref{thm:shortenall} for $h\le k$.

\begin{lemma}\label{lem:shortenbluecyclebyOk}
    Let $h\ge 1$, $k\ge 3$, $n>2k$ and let $H$ be a blue cycle on $n+h$ vertices.
    Then
    \[
        \rr_{H}(C_k,\{C_n,C_{n+1}\})\le h+k+4.
    \]
\end{lemma}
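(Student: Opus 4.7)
The plan is to prove Lemma~\ref{lem:shortenbluecyclebyOk} by induction on a pair $(h, j)$, where $h$ is the amount of remaining cycle shortening and $j$ is the length of a red path along $H$ that Builder has already assembled. The strengthened claim I would use is this: whenever the host graph contains a blue cycle of length $n+h$ together with a red path of $j$ edges on consecutive ``even-indexed'' cycle vertices ($0 \le j \le k-1$), Builder can finish the game in at most $h + (k-j) + 4$ rounds. The lemma is recovered as the $j=0$ case.

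Labelling the cycle vertices $v_0, v_1, \dots, v_{n+h-1}$ in cyclic order so that the red path is $v_0 v_2 v_4 \cdots v_{2j}$, Builder's move in each round is the distance-$2$ chord $v_{2j}v_{2j+2}$ at the far endpoint of the red path. A blue response produces a new blue cycle of length $n+h-1$ bypassing $v_{2j+1}$ (with the red path preserved), reducing to the subproblem $(h-1, j)$; a red response extends the red path to length $j+1$, reducing to $(h, j+1)$. In either case the inductive hypothesis gives a bound of $h + (k-j) + 3$ on the remaining rounds, so including the round just played the total is $h + (k-j) + 4$, matching the budget. The recursion terminates when $h \le 1$ (in which case $H$ itself is already a blue $C_{n+1}$ or $C_n$, so Builder has won) or when $j$ reaches $k-1$.

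At $j = k-1$ Builder plays a closing chord aiming to complete a red $C_k$. The natural choice, $v_0 v_{2(k-1)}$ of cycle-distance $2k-2$, yields a red $C_k$ if coloured red, and if coloured blue it shortens the cycle by $2k-3$. When $h \ge 2k-3$ the resulting cycle still has length $\ge n$, so either the game ends immediately (if the residual length is exactly $n$ or $n+1$) or the induction applies to the residual configuration; since $1 + (h - 2k + 3) + 5 \le h + 5$ for $k \ge 3$, this fits within the $+4$ slack of the bound.

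The main technical obstacle is the overshoot case $h < 2k - 3$, in which a blue colouring of the natural closing chord $v_0 v_{2(k-1)}$ would shorten the blue cycle past $C_n$. In this regime Builder instead plays a closing chord of cycle-distance $h$ or $h+1$: a blue colouring then gives an immediate blue $C_{n+1}$ or $C_n$ and wins, while a red colouring closes part of the red path into a $C_{k'}$ with $k' < k$ (via the same parity-based reasoning used in the proof of \Cref{lem:shortenbluecyclebyone}, using observations (\ref{short0}) and (\ref{short1}) adapted to $C_{n+h}$), after which Builder must continue extending the red path and re-close. The extra $+4$ slack in the bound is exactly what is needed to absorb this alternative closing. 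The detailed case analysis splits on the parities of $h$ and $k$ and on whether the desired closing vertex already lies on Builder's red path, mirroring the odd/even-$k$ split in \Cref{lem:shortenbluecyclebyone}.
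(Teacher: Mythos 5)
Your opening engine is the same as the paper's: the distance-two chord $v_{2j}v_{2j+2}$ at the tip of the red path either shortens the blue cycle by one or extends the red path by one, and a potential of the form $h+(k-j)$ drops by one per round. The gap is in the endgame, and it is a real one. Once the red path $v_0v_2\dots v_{2j}$ is long and the remaining excess $h$ is small (your ``overshoot case''), \emph{no single chord can work}: a chord whose red colouring completes a red $C_k$ must join two red-path vertices at red-distance $k-1$, hence at cycle-distance $2k-2$, so its blue colouring shortens the cycle by $2k-3>h$ (and the complementary arc gives a blue cycle of length only $2k-1<n$); conversely, the chord of cycle-distance $h$ or $h+1$ that you propose instead closes, when red, a red cycle of length strictly less than $k$, which is worthless --- the target is a red $C_k$ as a subgraph, and Painter will gladly hand you short red cycles forever. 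So the concrete move you describe makes no progress, and ``continue extending the red path and re-close'' does not repair it, because the same dichotomy recurs.

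The paper's resolution, which is the one idea missing from your proposal, is to play \emph{two crossing chords as a combined threat}: after extending the red path to $v_0v_2\dots v_{2k}$, Builder plays $v_0v_{2k-2}$ and $v_2v_{2k}$ (more generally $v_0v_{2k-2}$ and $v_{2\ell+2}v_{2k+2\ell}$). Each of these individually completes a red $C_k$ if coloured red, yet if \emph{both} are blue they jointly cut the blue cycle by only $4\ell+2$ (only $2$ for $\ell=0$) --- this is exactly the mechanism of observation (2) in the proof of \Cref{lem:shortenbluecyclebyone}, which you cite but do not actually deploy. Your induction also cannot absorb these steps as stated: in the two-chord step with $\ell=0$ the quantity $h+(k-j)$ is unchanged over two rounds, so the inductive bound $h+(k-j)+4$ fails; the paper instead shows that such potential-preserving steps occur at most twice (they are only needed when the cycle length is already within a bounded window of $n$), which is precisely where the additive $+4$ comes from. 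As written, your proof does not go through in the overshoot regime, which is the whole content of the lemma beyond \Cref{lem:shortenbluecyclebyone}.
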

\begin{proof}
    For any integers $p> 2q\ge 0$ we define a colored graph $C(p,q)$ such that it is the union of a blue cycle $v_0v_1\ldots v_{p-1}$ of length $p$ and a red path $v_0 v_2 v_4 \dots v_{2q}$ of length $q$.
    We begin with two easy observations.

    \siamver{\begin{enumerate}}{\begin{enumerate}[(1)]}
        \item\label{Cpq}
            If $p > 2q+2$ and the host graph is $C(p,q)$, then Builder can force $C(p-1,q)$ or $C(p,q+1)$ within $1$ round by selecting edge $v_{2q}v_{2q+2}$.
        \item\label{Cpq2}
            If $p > 2k+2\ell$, $0\le \ell\le k-2$ and the host graph is $C(p,k+\ell)$, then Builder can force $C(p-2-4\ell,k-2-\ell)$ or a red $C_k$ within two rounds by selecting edges $v_0 v_{2k-2}$ and $v_{2\ell+2} v_{2k+2\ell}$.
            This achieves his goal, since coloring both of the edges blue shortens the initial blue cycle by $4\ell+2$ vertices, while the initial red path is shortened by $2\ell+2$ (i.e. the shorter blue cycle has a red path of length $k+\ell-(2\ell+2)=k-\ell-2$ on it); see \Cref{fig:approximate_cycle_cases}.
    \end{enumerate}

    \newcommand{\drawbluepathexample}[1]{
        \pgfmathtruncatemacro\nodeaddon{#1+1}
        \pgfmathtruncatemacro\edgesmone{#1-2}
        \node[hide] (0) at (0,0) {};
        \node[hide] (\nodeaddon) at (\nodeaddon,0) {};
        \foreach \i in {1,...,#1} {
            \node (\i) at (\i,0){};
        }
        \foreach \i in {0,...,#1} {
            \pgfmathtruncatemacro\j{\i+1}
            \draw[blue] (\i) -- (\j);
        }
    }
    \begin{figure}[ht]
        \centering
        \def\nodes{17}
        \begin{tikzpicture}[scale=0.6]
            \tikzstyle{every node}=[main]
            \begin{scope}
                \drawbluepathexample{\nodes}
                \foreach \i in {1,15}{
                    \node[label={[shift={(0,-4mm)},rectangle,anchor=north,inner sep=0]$v_{\i}$}] at (\i) {};
                }
                \foreach \i in {1,3,...,\edgesmone} {
                    \pgfmathtruncatemacro\j{\i+2}
                    \draw[red] (\i) to[bend right=50] (\j);
                }
                \draw[red] (1) .. controls (2,2) and (14,2) .. (15);
                \draw[red,selected] (1) .. controls (2,2) and (14,2) .. (15);
                \foreach \i in {1,3,...,14} {
                    \pgfmathtruncatemacro\j{\i+2}
                    \draw[red,selected] (\i) to[bend right=50] (\j);
                }
            \end{scope}
        \end{tikzpicture}
        \begin{tikzpicture}[scale=0.6]
            \tikzstyle{every node}=[main]
            \begin{scope}
                \drawbluepathexample{\nodes}
                \foreach \i in {1,3,15,17}{
                    \node[label={[shift={(0,-4mm)},rectangle,anchor=north,inner sep=0]$v_{\i}$}] at (\i) {};
                }
                \foreach \i in {1,3,...,\edgesmone} {
                    \pgfmathtruncatemacro\j{\i+2}
                    \draw[red] (\i) to[bend right=50] (\j);
                }
                \foreach \i in {3,5,...,14} {
                    \pgfmathtruncatemacro\j{\i+2}
                    \draw[red,selected] (\i) to[bend right=50] (\j);
                }
                \draw[blue] (1) .. controls (2,3) and (14,2) .. (15);
                \draw[blue] (3) .. controls (4,2) and (16,3) .. (17);
                \draw[blue,selected] (1) .. controls (2,3) and (14,2) .. (15);
                \draw[blue,selected] (3) .. controls (4,2) and (16,3) .. (17);
                \draw[blue,selected] (0) to (1);
                \draw[blue,selected] (17) to (18);
                \foreach \i in {3,4,...,14} {
                    \pgfmathtruncatemacro\j{\i+1}
                    \draw[blue,selected] (\i) to (\j);
                }
            \end{scope}
        \end{tikzpicture}
        \caption{
            Two cases that appear in observation (\ref{Cpq2}) while $k=8$ and $\ell=0$.
            The outlined edges constitute the final structure.
            \textbf{Top:}
            One of the two edges is red.
            \textbf{Bottom:}
            Both of the edges are blue.
        }%
        \label{fig:approximate_cycle_cases}
    \end{figure}
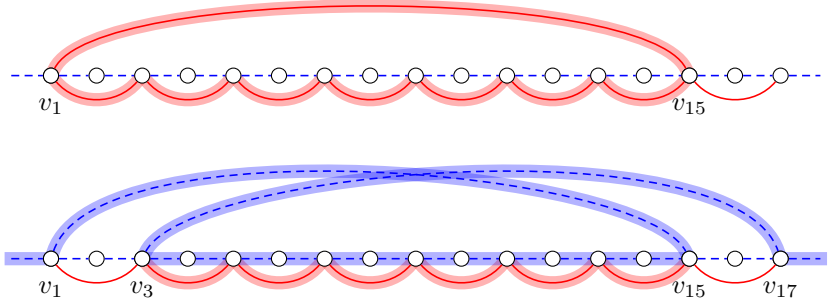

    We will show the strategy for Builder in $\RR_H(C_k,\{C_n,C_{n+1}\})$, which repeatedly uses the above observations.
    First, let us define a potential function of the host graph.
    For $p\ge n$ and $0\le q\le k+1$ we define $f(p,q)=p-n+k-q$ and call it the potential of a colored graph $C(p,q)$.
    Given a colored graph $G$ containing a blue cycle on $n$ vertices, by the potential $f(G)$ of $G$ we mean the smallest potential among all subgraphs $C(p,q)$ contained in $G$, where $p\ge n$ and $0\le q\le k+1$.

    The game starts with the host graph $H=C(n+h,0)$, so $f(H)=h+k$.
    We assume that Painter never creates a red $C_k$.
    Builder's strategy is focused on decreasing the potential after (almost) every round, based on observations (\ref{Cpq}) and (\ref{Cpq2}).
    More precisely, let $C(p,q)$ be the structure with the smallest potential in the host graph after some round of the game.
    As long as $p\ge n+2$, Builder plays in the following way.
    \siamver{\begin{enumerate}}{\begin{enumerate}[(a)]}
        \item
            If $q=k+1$ and $p\ge n+6$, then Builder selects two edges based on (\ref{Cpq2}) applied with $\ell=1$.
            Then the host graph contains a copy of $C(p-6,q-4)$, so the potential decreases by 2, from $f(p,k+1)$ to $f(p-6,k-3)$.
        \item
            If either $q=k$ and $p\ge n+6$, or $q<k$, then Builder selects an edge based on (\ref{Cpq}), so that a copy of $C(p-1,q)$ or $C(p,q+1)$ is created within one round.
            In both cases the potential of the host graph decreases by 1.
        \item\label{ending}
            If $q=k$ and $n+2\le p\le n+5$, then Builder uses (\ref{Cpq2}) applied with $\ell=0$.
            After two rounds he gets a copy of $C(p-2,k-2)$, so the potential of the host graph does not change.
    \end{enumerate}
    The game stops when the host graph $H'$ contains $C(p,q)$ with $p=n$ or $p=n+1$.
    Observe that at this moment $q\le k$, so the potential $f(H')$ is non-negative.
    Note also that case (\ref{ending}) of not changing the potential happens at most twice in the game so there are at most $4$ rounds such that except for these four rounds the potential decreases by one on average per round.
    In view of the starting potential $f(H)=h+k$, we conclude that Builder achieves his goal within at most $h+k+4$ rounds.
\end{proof}

Though \Cref{lem:shortenbluecyclebyOk} holds for every $h$, it gives the upper bound depending on $n$ if $h$ increases with $n$.
The next lemma allows to shorten very long cycles more efficiently.

\begin{lemma}\label{lem:shortenbluecycle}
    Let $k\ge 3$, $k\le h\le 2n$ and let $H$ be a blue cycle of length $n+h$.
    Then
    \[
        \rr_{H}(C_k,\{C_{n+r},C_{n+r+1},C_{n+r+2}\})\le k+1,
    \]
    where $r=0$ if $h$ is even and $r=1$ if $h$ is odd.
\end{lemma}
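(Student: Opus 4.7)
My plan is to shorten the long blue cycle $H$ of length $n+h$ efficiently using a single well-chosen ``long chord'' in the best case, with a backup strategy to force a red $C_k$ if Painter refuses to cooperate. The parity choice of $r$ (namely $r \equiv h \pmod 2$) is arranged precisely so that $h - r$ is even; this evenness is what will make the arithmetic in the closing step go through.

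Let the cycle $H$ have vertices $v_0, v_1, \dots, v_{n+h-1}$ in cyclic order. First, Builder plays the chord $c_1 = v_0 v_{h-r}$, whose arc length on $H$ is $h-r$. If Painter colors $c_1$ blue, then together with the complementary arc $v_{h-r}, v_{h-r+1}, \dots, v_{n+h-1}, v_0$ of $H$ (which has length $n+r$) the chord $c_1$ closes a blue cycle of length $n+r+1 \in \{n+r,n+r+1,n+r+2\}$, so the game ends in one round.

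Otherwise $c_1$ is red. Builder must now force a red $C_k$ within the remaining $k$ rounds while still ensuring that any blue response creates a target cycle. The plan is to play up to $k$ further chords $c_2,\dots,c_{k+1}$, each of arc length in $\{h-r-1, h-r, h-r+1\}$. The key property of such a \emph{good chord} is that, when colored blue, it together with one of the two arcs of $H$ it cuts off forms a blue cycle of length in $\{n+r, n+r+1, n+r+2\}$ (because the complementary arc has length in $\{n+r-1, n+r, n+r+1\}$). The chords are chosen so that, when all of $c_1,\dots,c_{k+1}$ are colored red, they contain a red $C_k$ as a subgraph.

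The main obstacle is the combinatorial step of placing $k$ ``good'' chords so that, together with $c_1$, they form a $C_k$. Concretely, one must choose $k$ vertices and $k$ chords among them whose (signed) arc lengths sum to $0 \pmod{n+h}$, with each arc length lying in $\{h-r-1, h-r, h-r+1\}$. This splits naturally by parity of $k$: for even $k$, half of the chords can be oriented ``forward'' and half ``backward'' around $H$, with their contributions of $\pm(h-r)$ cancelling exactly, and the $\pm 1$ slack in arc length used only to make the vertices distinct; for odd $k$, one has a residual unbalanced contribution of $h-r$, which must be absorbed using the extra round from the budget $k+1$ together with a careful $\pm 1$ perturbation of several arc lengths. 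Here the evenness of $h-r$ is essential: without it, no combination of $\pm 1$ adjustments can compensate the parity discrepancy within $k+1$ rounds.
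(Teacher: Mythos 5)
Your high-level plan is the same as the paper's: one probe chord of arc length $h-r$, then a batch of ``good'' chords of arc length in $\{h-r-1,\,h-r,\,h-r+1\}$ (each of which, if blue, closes a blue cycle of length $n+r$, $n+r+1$ or $n+r+2$), arranged so that an all-red response produces a red $C_k$. But the step you yourself flag as ``the main obstacle'' is the entire content of the lemma, and the sketch you give for it does not go through. Even for even $k$, ``half forward, half backward, with $\pm1$ slack used only to make the vertices distinct'' is not a construction: a closed walk of $k$ steps with signed lengths $\pm(h-r)+\epsilon_i$, $\epsilon_i\in\{-1,0,1\}$, needs $\sum\epsilon_i=0$ \emph{and} all partial sums distinct, and the naive alternation $+,-,+,-,\dots$ has partial sums oscillating between two values. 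The paper resolves this with an out-and-back zigzag $v_0,v_{2q+1},v_2,v_{2q+3},\dots$ followed by a return pass $\dots,v_{2q+6},v_5,v_{2q+4},v_3,\dots,v_{2q}$, where $2q=h-r$, so that the low indices are $0,2,4,\dots$ outbound and $\dots,5,3,1$ on the way back; you would need to exhibit something of this kind explicitly.

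For odd $k$ the gap is more serious: the approach as you describe it cannot work. If every edge of the intended red $C_k$ has arc length within $1$ of $h-r$, the signed lengths sum to $m(h-r)+s$ with $m$ odd (hence $m\neq 0$) and $|s|\le k$; since $h\ge k$ gives $h-r\ge k-1$, this sum cannot vanish (and cannot be arranged to be a nonzero multiple of $n+h$ for general $h$ and $n$), and neither a $\pm1$ perturbation nor one extra chord of the same type repairs an odd $m$. The paper's fix is qualitatively different, and it is there that the evenness of $h-r$ is actually used: Builder first plays the half-length chords $v_0v_q$ and $v_qv_{2q}$ with $q=(h-r)/2$ (and possibly $v_{2q}v_{3q}$). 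Two compatible blue half-chords already close a cycle of length $n+r+2$, while two red ones give a red path of length $2$ from $v_0$ to $v_{2q}$, so the remaining threat needs only a path of $k-2$ good chords --- an even number, which restores the balance. Your parity observation about $h-r$ points at the right phenomenon, but the mechanism you propose for exploiting it is not viable; you need the half-length chords or an equivalent device, together with an explicit vertex-disjoint zigzag.
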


\begin{proof}
    Let $p=n+h$ and $q=\lfloor h/2\rfloor$. We have $q\ge \lfloor k/2\rfloor$, $p\ge 3q$ and we need to show that 
    \[
        \rr_{H}(C_k,\{C_{p-2q},C_{p-2q+1},C_{p-2q+2}\})\le k+1.
    \]
    Denote the consecutive vertices of $H$ by $v_0,v_1, \dots, v_{p-1}$.
    We present Builder's strategy in two stages.
    The first stage depends on the parity of $k$.
    \begin{itemize}
        \item $k$ is even.

            Builder selects edge $v_0v_{2q}$.
            If Painter colors it blue, then there is a blue $C_{p-2q+1}$ and the game ends.
            Otherwise, we proceed to the second stage.

        \item $k$ is odd.

            Builder chooses edges $v_0 v_{q}$ and $v_{q} v_{2q}$.
            If Painter colors both edges blue, then we have a blue cycle $C_{p-2q+2}$ and the game ends.
            Therefore, we can assume that $v_{q} v_{2q}$ is red.
            Then Builder chooses $v_{2q}v_{3q}$.
            If after Painter's answer both edges $v_0 v_{q}$ and $v_{2q} v_{3q}$ are blue, then we obtain a blue $C_{p-2q+2}$ and finish the game.
            If any of these edges is red, it is adjacent to the red edge $v_{q}v_{2q}$.
            Because of the symmetry, we can assume that the pair of red edges is $v_{0}v_{q}$ and $v_{q}v_{2q}$ and the game proceeds to the second stage.
    \end{itemize}
    In the second stage, the vertices $v_0$ and $v_{2q}$ of the blue cycle $H$ are connected with a red path of length $1$ or $2$ (depending on the parity of $k$).
    Now Builder selects all edges of a path $P_{k}$ (if $k$ is even) or $P_{k-1}$ (if $k$ is odd) from $v_0$ to $v_{2q}$, on the edge set
    \begin{equation*}
        \begin{gathered}
            \bigcup_{i=0}^{\lfloor k/2\rfloor -2} v_i v_{i+2q+1}
            \quad\cup\quad \bigcup_{i=1}^{\lfloor k/2\rfloor -1} v_i v_{i+2q-1}
            \quad\cup\ v_{\lfloor k/2\rfloor -1}v_{2q+\lfloor k/2\rfloor -1},
        \end{gathered}
    \end{equation*}
    see \Cref{fig:forced_odd_red_cycle_or_join2}.
    Every edge $v_x v_y$ of this path satisfies $2q-1 \le |x-y| \le 2q+1$. 

    \begin{figure}[ht]
        \centering
        \begin{tikzpicture}
        \tikzstyle{every node}=[main]
            \begin{scope}
                \def\mypathlength{9}
                \node[hide] at (2,-2) {}; 
                \node[main,label=90:{$v_0$}] (u1) at (1,+1){};
                \node[main,label={[rectangle,shift={(0,-8mm)}]$v_{2q}$}] (v1) at (1,-1){}; 
                \foreach \i in {2,...,\mypathlength}{
                    \pgfmathtruncatemacro\j{\i-1}
                    \node[main,label=90:{$v_{\j}$}] (u\i) at (\i,+1){};
                    \node[main,label={[rectangle,shift={(0,-8mm)}]$v_{2q+\j}$}] (v\i) at (\i,-1){};
                }
                \foreach \i in {2,...,\mypathlength}{
                    \pgfmathtruncatemacro\prev{\i-1}
                    \draw[blue] (u\prev) -- (u\i);
                    \draw[blue] (v\prev) -- (v\i);
                }
                \pgfmathtruncatemacro\lastvtx{\mypathlength+1}
                \node[main,label=180:{$v_q$}] (x) at (-.5,0) {};
                \draw[red] (u1) -- (x) -- (v1);
                \foreach \i in {2,...,\mypathlength}{
                    \pgfmathtruncatemacro\j{\i-1}
                    \draw[red] (u\j) -- (v\i);
                    \draw[red] (v\j) -- (u\i);
                    \draw[red,selected] (u\j) -- (v\i);
                    \draw[red,selected] (v\j) -- (u\i);
                }
                \draw[red] (u\mypathlength) -- (v\mypathlength);
                \draw[red,selected] (u\mypathlength) -- (v\mypathlength);
                \draw[blue] (u1) edge[longpath,bend right] (x);
                \draw[blue,longpath] (x) ..controls +(3,.5) and +(-2,2).. (v\mypathlength);
                \draw[blue,longpath] (v1) ..controls +(1,1) and +(-2,-2).. (u\mypathlength);
            \end{scope}
        \end{tikzpicture}
        \caption{
            The path from $v_0$ to $v_{2q}$ in \Cref{lem:shortenbluecycle} for $k=19$ (outlined), edges $v_0 v_q$ and $v_q v_{2q}$ (thin red) and the blue cycle $C_p$.
        }%
        \label{fig:forced_odd_red_cycle_or_join2}
    \end{figure}
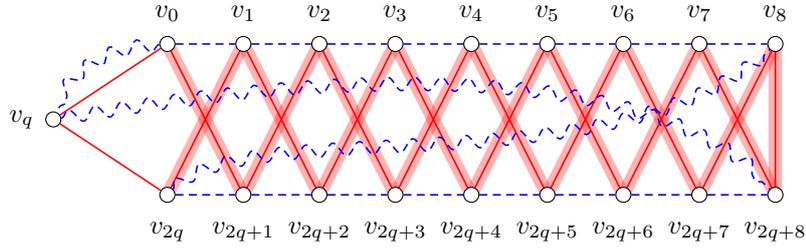
    If Painter avoids a red cycle of length $k$, then one of these edges must be blue.
    Such a blue edge cuts off $2q$, $2q-1$ or $2q-2$ vertices from the cycle $H$ so we obtain a blue cycle on $p-2q$, $p-2q+1$ or $p-2q+2$ vertices in the host graph.

    Finally, notice that Builder selects at most $1+(k-1)$ edges in the game for even $k$, while for odd $k$ the game last at most $3+(k-2)$ rounds.
\end{proof}

\begin{proof}[Proof of \Cref{thm:shortenall}]
If $h\le k$, then the assertion follows from \Cref{lem:shortenbluecyclebyOk} and \Cref{lem:shortenbluecyclebyone}.
Further assume that $k\le h\le 2n$.
We 
apply \Cref{lem:shortenbluecycle}.
It shows that, starting from a blue cycle on $n+h$ vertices, Builder can force either a red $C_k$ or one of the blue cycles $C_{n},C_{n+1},C_{n+2},C_{n+3}$ ($C_{n},C_{n+1}$ or $C_{n+2}$ if $h$ is even; $C_{n+1},C_{n+2}$ or $C_{n+3}$ if $h$ is odd) within $k+1$ rounds.
Then we apply \Cref{lem:shortenbluecyclebyOk} and \Cref{lem:shortenbluecyclebyone} and the assertion of \Cref{thm:shortenall} follows.
In the worst-case scenario, Builder needs $k+1$ rounds to get from the blue $C_{n+h}$ to a blue cycle $C'$ of length at most $n+3$, then $k+7$ rounds to get from $C'$ to $C_{n+1}$, and finally $k+2$ rounds to get a blue $C_n$ from $C_{n+1}$.
\end{proof}

\section{Short even cycle vs long cycle}\label{sec:even_buidler_strategy}

In this section, we show a Builder's strategy for $\rr(C_k,C_n)$ where $k$ is even.
Supporting lemmas will also be used extensively in the odd cycle case in \Cref{sec:odd_buidler_strategy}.
First, let us state overview of the strategy in \Cref{alg:even_case}.

\begin{algorithm}[ht]
    \caption{Builder's strategy for $\rr(C_k,C_n)$ for an even $k$}\label{alg:even_case}%
    \begin{algorithmic}
        \State Build a red $P_{2k-4}$ or a blue $P_{n+\Oh(1)}$ with \Cref{cor:gryt2}.
        \If{we have a red $P_{2k-4}$}
            \State Build $k$ blue paths of total length of $n+\Oh(k)$ with \Cref{lem:icicle}.
            \State Join $k$ paths into two with \Cref{lem:join_to_2_paths}.
            \State Join two paths into one with \Cref{lem:join_2_paths2}.
        \EndIf
        \State Join the blue path into an approximate cycle with \Cref{lem:join_2_paths2}.
        \State Shorten the approximate cycle to $C_n$ using \Cref{thm:shortenall}.
    \end{algorithmic}
\end{algorithm}

A big part of the strategy is spent on forcing a long blue path efficiently using a threat of a red $C_k$.
For this purpose, we now introduce an icicle path, see \Cref{fig:icicle_path}.
\begin{definition}
    \emph{Icicle path} $\IP_{\ell,n}$ is a graph on $n$ vertices that consists of a red path $P_\ell$ on $\ell$ vertices called a \emph{spine} and $\ell$ vertex disjoint blue paths (possibly with no edges) called \emph{icicles}, each having one endpoint in the red path.
\end{definition}

\begin{figure}[ht]
    \centering
    \def\nodes{14}
    \def\nodessum{\nodes}
    \begin{tikzpicture}[scale=0.6]
        \tikzstyle{every node}=[main]
        \begin{scope}
            \pgfmathtruncatemacro\edges{\nodes-1}
            \foreach \i in {1,...,\nodes} {
                \node (\i_0) at (\i,0){};
            }
            \foreach \i in {1,...,\edges} {
                \pgfmathtruncatemacro\j{\i+1}
                \draw[red] (\i_0) to (\j_0);
            }
            \foreach[count=\i] \len in {3,2,1,2,0,2,1,2,1,3,0,0,1,3}{
                \pgfmathtruncatemacro{\nodessum}{\nodessum+\len}
                \xdef\nodessum{\nodessum}
                \ifthenelse{0 = \len}{}{
                    \foreach \r in {1,...,\len}{
                        \pgfmathtruncatemacro\rmone{\r-1}
                        \node (\i_\r) at (\i,-\r) {};
                        \draw[blue] (\i_\rmone) -- (\i_\r);
                    }
                }
            }
        \end{scope}
    \end{tikzpicture}
    \caption{
        Example of an icicle path $\IP_{\nodes,\nodessum}$, i.e., in total there are $\nodessum$ vertices and its red path contains $\nodes$ vertices.
    }%
    \label{fig:icicle_path}
\end{figure}

\begin{lemma}\label{lem:icicle}
    Let $1\le m\le n$ and $k\geq 3$.
    If $B$ is a blue path on $m$ vertices, then
    \[
        \rr_{B}\big(C_k, {\mathcal L}^{(\le k)}_n\big) \le 2n-2m.
    \]
\end{lemma}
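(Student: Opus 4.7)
The plan is to maintain an icicle-path-like structure that grows by one vertex per extension step, with at most two rounds per step, while keeping the number of blue components at most $k$. Since the blue edges of an icicle path with $\ell$ icicles on $n$ vertices form a line forest in $\mathcal{L}^{(\le \ell)}_n$, it suffices to reach $n$ vertices while preserving a bound $c \le k$ on the total number of blue components (icicles together with any ``orphan'' blue paths created along the way). Builder starts from $\IP_{1,m}$: a single spine vertex $r_1$, chosen as one endpoint of $B$, with $I_1 = B$ as its icicle.

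Each extension step proceeds as follows. Let $r = r_\ell$ be the current spine end and $b$ the endpoint of the active icicle $I_\ell$, so $b = r$ exactly when $I_\ell$ is trivial. Builder plays $bv$ for a fresh $v$. A blue answer extends $I_\ell$ by one vertex in a single round. On a red answer, if $b = r$, then the new red edge $rv$ extends the spine (increasing $\ell$ by one) in one round. Otherwise Builder plays a second-round edge $rv$: a red answer extends the spine through $v$ and leaves $bv$ as a harmless extra diagonal red edge, while a blue answer forces Builder to restructure by setting the new icicle at $r$ to $\{rv\}$ and declaring the former icicle past $r$ to be an orphan blue component. In every sub-case, exactly one new vertex is added in at most two rounds.

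To keep $c$ bounded, Builder performs a spine-closing move whenever $\ell$ reaches $k$: he plays the edge $r_1 r_k$. A red answer closes a red $C_k$ and ends the game. A blue answer merges the icicles at $r_1$ and $r_k$ via the new blue edge into a single blue path component, thereby decreasing $c$ by one and freeing budget for further extensions.

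The main obstacle is the amortized accounting. I must verify that the combined cost of extensions, follow-up rounds, orphan creations, and spine closures fits within $2(n-m)$ rounds while $c \le k$ is preserved throughout. The leverage is that every extra round (a follow-up or a spine-closing) can be charged to a previous red answer from Painter, and each such red answer corresponds to a freshly added vertex, so the pairing fits exactly within the doubled budget.
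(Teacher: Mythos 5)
There is a genuine gap in the amortized accounting, and it is exactly the point where your approach diverges from what actually makes the bound $2n-2m$ work. Your charging scheme claims every extra round (a follow-up edge or a spine-closing edge) can be charged to a previous red answer that produced a fresh vertex. But in your red--red sub-case the new vertex already costs two rounds, consuming its entire budget, while simultaneously growing the spine; once the spine has reached length $k$, each further growth forces an additional spine-closing round with nothing left to charge it to. Painter can exploit this: after an initial phase of all-red answers that fills the spine and forces one closing, the icicle at the spine end is nontrivial forever after (it is the long merged blue path), so Painter answers red to both edges of every extension step. Each new vertex then costs two extension rounds plus one closing round, giving roughly $3(n-m)$ rounds in total. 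There is a second, linked problem: most spine extensions leave trivial icicles, i.e.\ isolated vertices of the blue graph, each of which is its own component of the eventual line forest. Without closings the final blue structure has far more than $k$ components; with closings the round count overshoots. Your proposal does not resolve this tension, and the closing move itself (merging the icicles at $r_1$ and $r_k$) produces a blue path attached to two spine vertices, which breaks the icicle-path structure your extension step relies on.

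The paper's proof escapes this by changing the potential: it maintains an icicle path \emph{together with a disjoint ``special'' blue path} $P_j$, and measures progress by the score $t+j+(\text{number of blue edges})=2t+2j-a-1$, which Builder increases by exactly one \emph{every} round. Counting blue edges is the crucial difference: the capping move at $a=k$ (selecting $v_kv_1$) adds a blue edge but no new vertex, yet still gains one unit of score, so no round is ever wasted; and the spine never exceeds $k$, so the final identity $2(t+j)-a-1=2n$ with $a\le k$ immediately yields $t+j\ge n+1$ covered by at most $k$ blue paths. A purely vertex-counting potential of the kind you propose cannot absorb the structural-maintenance rounds; you would need to either prevent the spine from ever exceeding $k$ (as the paper does, by redirecting blue answers into the special path and rotating an icicle at $a=k$) or find a potential in which the closing move also makes measurable progress.
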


\begin{proof}
    Having a blue path $P_m$, Builder will play so that after every round of the game all blue edges of the host graph are contained in a vertex disjoint union of a blue path $P_j$, let us call it a special path, and an icicle path $\IP_{a,t}$.
    We define the score of such a host graph, which changes after every round, as
    \[
        S(\IP_{a,t},P_j) = t+j+\text{number of blue edges of the host graph}.
    \]

    At the start of the game we have an icicle path $\IP_{1,m}$ (the blue path $P_m$) and a free vertex as a special path.
    We claim that Builder can play so that the icicle path has at most $k$ icicles all the time.
    Furthermore, if the icicle path has $k-1$ or $k$ icicles, then the special path is trivial; moreover if the icicle path has $k$ icicles, then its last icicle is trivial.
    We also claim that Builder can increase the score of the host graph by one in every round provided no red $C_k$ is created.

    Suppose that our statement is true after round $r\ge 0$, the host graph has an icicle path $\IP_{a,t}$ and a blue path $P_j$.
    Denote vertices of the spine of the icicle path by $v_1,v_2,\dots,v_a$ and let $b_i$ be the number of vertices of the blue icicle with an endpoint $v_i$.
    The score now is
    \[
        S(\IP_{a,t},P_j) = t+j+\Big(j-1+\sum_{i=1}^a (b_i-1)\Big) = 2t+2j-a-1.
    \]

    We claim that Builder can increase the score by one in round $r+1$ by proceeding with the following strategy.
    Denote one endpoint of the special path $P_j$ by $u$.
    We consider three cases, depending on the length of the spine.
    \begin{enumerate}

        \item\label{case:new_path}
            If $a \leq k-2$, then Builder selects $v_a u$.

            If Painter colors it red, then $v_a u$ increases the icicle path to $\IP_{a+1,t+1}$, we take any free vertex as a new (empty) special path, and the score is increased by $1$.
            Observe that if the spine of the new icicle path has $k-1$ vertices, then the new special path is trivial as required.

            If Painter colors $v_a u$ blue, then we forget about the red edge $v_{a-1}v_a$, the path $P_j$ is increased by the blue icicle with the endpoint $v_a$ and we obtain the blue path on $j+b(v_a)$ vertices, while the new icicle path is $\IP_{a-1,t-b(v_a)}$.
            The new blue edge increases the score by $1$.

        \item
            If $a=k-1$, then Builder selects $v_a u$.

            Recall that by the inductive assumption the special path is trivial.
            If Painter colors $v_a u$ red, then $v_a u$ increases the icicle path to $\IP_{k,t+1}$, we take any free vertex as a new (empty) special path, and the score is increased by $1$.
            The last blue icicle of $\IP_{k,t+1}$ is trivial and the new special path is trivial as required.

            If Painter colors $v_a u$ blue, then we forget about the red edge $v_{a-1}v_a$, the path $P_j$ is increased by the blue icicle path with the endpoint $v_a$ and we obtain the blue path on $j+b(v_a)$ vertices, while the new icicle path is $\IP_{a-1,t-b(v_a)}$.
            The new blue edge increases the score by $1$.

        \item
            If $a = k$, then Builder selects the edge $v_k v_1$.

            By the inductive assumption the special path is trivial and the icicle with the endpoint $v_k$ is trivial.
            If Painter colors $v_k v_1$ red, then $C_k$ is created and Builder wins.
            If Painter colors it blue, then we forget about the red edge $v_1v_2$, the icicle with the endpoint $v_1$ gets moved to $v_k$ and thereby we obtain a new icicle path $\IP_{k-1,t}$ with one blue edge more than in $\IP_{k,t}$.
            The score is increased by $1$, the special path remains trivial.
    \end{enumerate}
    In all cases the statement is true after round $r+1$.

    The starting position with a blue path $P_m$ and a trivial special path $u$ has the score $(\IP_{1,m},u) =2m$.
    As long as there is no red $C_k$, the score increases after every round so after $2n-2m$ rounds it reaches $2n$.
    Then we have an icicle path $\IP_{a,t}$ and a special path $P_j$ such that they contain all blue edges of the host graph and $a\le k$.
    Thus $2n=S(\IP_{a,t},P_j) = 2(t+j)-a-1$ and hence $t+j=(2n+a+1)/2\ge n+1$.
    We infer that $\IP_{a,t}\cup P_j$ contains at most $k$ vertex disjoint blue paths of total number of vertices at least $n$ (in case $a=k$ we ignore the special path, which is trivial).
\end{proof}

\begin{observation}\label{obs:avoid_ck}
    If there is a red path $P_{k-1}$ where $v_1$ and $v_{k-1}$ are its endpoints, then if Builder selects edges $v_1x$ and $v_{k-1}x$ where $x \not\in V(P_{k-1})$, then Painter must color one of these edges blue if he wants to avoid a red $C_k$.
\end{observation}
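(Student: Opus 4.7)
The plan is to argue by contradiction: suppose Painter colors both edges $v_1 x$ and $v_{k-1} x$ red, and then exhibit a red $C_k$ in the host graph, contradicting Painter's goal.

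To construct the red $C_k$, I take the red path $P_{k-1}$ with endpoints $v_1$ and $v_{k-1}$ together with the two hypothetical red edges $v_1 x$ and $v_{k-1} x$. Since $x \notin V(P_{k-1})$, the vertex set $V(P_{k-1}) \cup \{x\}$ has exactly $k$ elements. The edge set consists of the $k-2$ edges of $P_{k-1}$ plus the two new edges, giving $k$ edges in total. Traversing $v_1 \to v_2 \to \cdots \to v_{k-1} \to x \to v_1$ yields a red cycle on $k$ vertices, i.e., a red $C_k$. As this contradicts the assumption that Painter avoids a red $C_k$, at least one of $v_1 x$ and $v_{k-1} x$ must be colored blue.

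There is no genuine obstacle here; the statement is a direct counting/closure argument showing that the two proposed edges together with the existing red $P_{k-1}$ close up into a $k$-cycle. The only thing one needs to be careful about is that $x$ is a new vertex (guaranteed by the hypothesis $x \notin V(P_{k-1})$), so no shorter cycle accidentally forms and the length is exactly $k$ rather than smaller.
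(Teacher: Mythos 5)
Your argument is correct and is exactly the intended one: the paper leaves this observation unproved because, as you note, the red $P_{k-1}$ has $k-1$ vertices and $k-2$ edges, so adjoining the new vertex $x$ via two red edges closes a red cycle on precisely $k$ vertices. Nothing further is needed.
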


In the next lemma we show that, having a red path on $k+t-4$ vertices, Builder can force Painter to connect $t$ disjoint paths into two paths.
The parity of $k$ is irrelevant.

\begin{lemma}\label{lem:join_to_2_paths}
    Suppose that $k\geq 3$, $t\ge 2$ and $H$ is a colored graph consisting of two vertex disjoint graphs: a blue $L^{(t)}_m$ and a red path of length $k+t-4$.
    Then
    \[
        \rr_{H}\big(C_k, {\mathcal L}^{(\le 2)}_{m+t-2}\big) \le 5(t-2).
    \]
\end{lemma}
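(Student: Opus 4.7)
The plan is to prove the lemma by iteratively performing $t-2$ ``merge'' operations, each using at most $5$ rounds and reducing the number of blue path components by exactly one. For the $i$-th merge ($i = 1, \ldots, t-2$) we use the red sub-path $R_i := r_i r_{i+1} \cdots r_{k+i-2}$ of length $k-2$, which is a red $P_{k-1}$ with endpoints $a_i := r_i$ and $b_i := r_{k+i-2}$. The shift of indices between successive iterations ensures that $a_i$ and $b_i$ are distinct from any $a_j$ or $b_j$ with $j<i$ and, in particular, they are fresh external vertices in the sense required by \Cref{obs:avoid_ck}.

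The core of the argument is a $5$-round merge subroutine. Before iteration $i$ there are $t-(i-1)\ge 3$ current blue path components, so Builder picks any three of them, say $C_1,C_2,C_3$ with endpoints $u_1,u_2,u_3$. Builder plays $u_1 a_i$ in Round~$1$. In \emph{Case A} (Painter colors blue), Builder plays $u_2 a_i$ and $u_3 a_i$ in Rounds~$2$ and~$3$, stopping early with a blue-merge via $a_i$ if either becomes blue; if both are red, then for each $j\in\{2,3\}$ the path $u_j a_i R_i$ is a red $P_k$, so in Rounds~$4$ and~$5$ the edges $u_2 b_i$ and $u_3 b_i$ are both forced blue by \Cref{obs:avoid_ck}, and the shared blue edges at $b_i$ merge $C_2$ with $C_3$ through the blue degree-$2$ vertex $b_i$ while $C_1+a_i$ becomes the other new path. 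In \emph{Case B} (Round~$1$ red), the path $u_1 a_i R_i$ is already a red $P_k$, so Round~$2$ plays $u_1 b_i$ and forces it blue; the remaining three rounds then mirror Case~A with the roles of $a_i$ and $b_i$ swapped. A case-by-case enumeration of Painter's responses shows that every branch finishes in at most $5$ rounds, merges two of the three chosen paths into a single blue path, uses a blue degree-$2$ vertex (either $a_i$ or $b_i$) as the merge point, and keeps the blue graph a line forest.

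After the $t-2$ iterations, the blue graph is a line forest with at most $2$ components and at least $m+(t-2)$ vertices, since each merge absorbs at least one new vertex from $R$ into the blue structure. Hence the blue graph contains a member of $\mathcal{L}^{(\leq 2)}_{m+t-2}$, and the total round count is at most $5(t-2)$. The main obstacle is the detailed case analysis within the $5$-round subroutine, particularly Case~B, where Painter's initial red move costs a round that must be recouped; this is possible precisely because that red edge immediately lengthens the red path $u_1 a_i R_i$ to $k$ vertices, turning $u_1 b_i$ into an immediate $C_k$-threat in Round~$2$ and buying back the lost round. A secondary subtlety is that some endpoints $u_j$ chosen in later iterations may themselves be vertices of $R$ (created by earlier merges in which $a_\ell$ or $b_\ell$ became new endpoints); this does not break the strategy because Builder can skip a pre-existing red edge and directly enter the corresponding sub-case of the subroutine, saving rather than losing rounds.
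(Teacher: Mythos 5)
Your overall plan coincides with the paper's: perform $t-2$ iterations, each spending at most $5$ rounds to merge two of three chosen blue paths by threatening a red $C_k$ along a red $P_{k-1}$ inside the given red path (via \Cref{obs:avoid_ck}). The genuine gap is in your Case~B, which does not fit the budget. After $u_1a_i$ is colored red and $u_1b_i$ is forced blue (rounds $1$--$2$), ``mirroring Case~A with the roles of $a_i$ and $b_i$ swapped'' means playing $u_2b_i$ and $u_3b_i$ in rounds $3$--$4$; if Painter colors both of these red, you still need \emph{two} further forced-blue edges $u_2a_i$ and $u_3a_i$ to obtain two blue edges meeting at $a_i$ (the red edge $u_1a_i$ contributes nothing to a blue merge), so this branch costs $6$ rounds, not $5$, and the total bound degrades to $6(t-2)$. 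The paper avoids this by being non-adaptive: Builder plays all three edges $x_1v_i$, $x_2v_i$, $x_3v_i$ first and applies pigeonhole --- either two of them are blue (merge at $v_i$ within $3$ rounds) or two are red, in which case exactly two more edges at $v_{i+k-2}$ are forced blue, for $5$ rounds total. Your Case~B is repairable in the same spirit (after $u_1b_i$ is forced blue, play $u_2a_i$: if red, then $u_2b_i$ is forced blue and merges with $u_1b_i$ at $b_i$; if blue, play $u_3a_i$ and finish either at $a_i$ or by forcing $u_3b_i$), but as written the claimed $5(t-2)$ bound fails.

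A secondary issue: your assertion that $a_i,b_i$ are distinct from all earlier $a_j,b_j$ is false for small $k$ (one has $a_i=b_{i-k+2}$ as soon as $i\ge k-1$), so a vertex that already received two blue edges as a merge point in an earlier iteration can reappear as a candidate merge point later; the danger is not the pre-existing red edges you mention but the accumulation of blue degree greater than $2$ at a reused vertex, which would destroy the line-forest structure. The paper's write-up is equally terse on this point, but your explicit distinctness claim is incorrect and your proposed workaround does not address it.
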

\begin{proof}
    Let $v_1,\dots,v_{k+t-4}$ denote vertices of the red path.
    Notice that \Cref{obs:avoid_ck} can be used between every pair of vertices $v_i$ and $v_{i+k-2}$, for $i\le t-2$.

    Starting with $t$ blue paths on $m$ vertices in total, Builder repeats $t-2$ times the following procedure which joins two blue paths.
    Take three blue paths $P$, $P'$, $P''$ on $b_1$, $b_2$, $b_3$ vertices, respectively, and let $x_1$, $x_2$, $x_3$ be one of their endpoints, respectively.
    If this is the $i$-th iteration of this procedure, then Builder selects edges $x_1v_i$, $x_2v_i$, and $x_3v_i$.
    If Painter colored two of them blue, then Builder achieved his goal.
    Otherwise, there are two red edges, say $x_1v_i$ and $x_2v_i$.
    Builder selects $x_1v_{i+k-2}$ and $x_2v_{i+k-2}$.
    Both edges must be blue by \Cref{obs:avoid_ck} so there is a blue path on $b_1+b_2+1$ vertices that joins $P$ and $P'$ through either $v_i$ or $v_{i+k-2}$.

    Every five rounds Builder either forces a red $C_k$ or he merges two of the blue paths through an additional vertex.
    Thus within $5(t-2)$ rounds Builder obtains two blue paths with the sum of their vertex numbers $m+t-2$ (or a red $C_k$).
\end{proof}

\begin{lemma}\label{lem:join_2_paths}
    Let $k\geq 4$ be an even integer.
    Suppose that there are two blue vertex disjoint paths $P$ and $Q$ both on $k/2$ vertices and $p,q$ are one of their endpoints, respectively.
    Then within $k$ rounds Builder can force a red $C_k$ or a blue path $R$ on at least $k/2$ vertices of $P\cup Q$, with its endpoints $p$ and $q$.
\end{lemma}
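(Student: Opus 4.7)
The plan is to exhibit a Hamiltonian cycle $C$ on $V(P)\cup V(Q)$ whose $k$ edges are all \emph{safe} for Builder, meaning that each such edge, if painted blue, immediately closes a blue $p$-$q$ path on at least $k/2$ vertices of $P\cup Q$, while if every edge of $C$ is painted red then $C$ itself is a red $C_k$.  Builder's strategy is then simply to play the $k$ edges of $C$, one per round, in some prescribed order; within $k$ rounds either some edge has been painted blue (closing the required blue $p$-$q$ path) or all $k$ edges are red (yielding the red $C_k$).

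Set $m = k/2$ and label $P = p_1 p_2 \ldots p_m$ with $p = p_1$, and $Q = q_1 q_2 \ldots q_m$ with $q = q_1$.  The key observation is that if a cross edge $p_i q_j$ is blue, then the blue subpath $p_1 \ldots p_i$ of $P$, the edge $p_i q_j$, and the blue subpath $q_j \ldots q_1$ of $Q$ together form a blue $p$-$q$ path on exactly $i + j$ vertices contained in $V(P)\cup V(Q)$; hence any cross edge with $i + j \ge m$ is safe, and the task reduces to finding a Hamiltonian cycle on $V(P) \cup V(Q)$ using only such edges.

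For $m = 2$ I take $C = p_1,\, q_1,\, p_2,\, q_2,\, p_1$, whose four edges have index sums $2, 3, 4, 3$, all at least $m = 2$.  For $m \ge 3$ I take
\[
C = p_1,\, q_{m-1},\, p_m,\, q_1,\, p_{m-1},\, q_2,\, p_{m-2},\, q_3,\, \ldots,\, p_2,\, q_m,\, p_1,
\]
so that the even positions carry $p_1, p_m, p_{m-1}, \ldots, p_2$ and the odd positions carry $q_{m-1}, q_1, q_2, \ldots, q_{m-2}, q_m$.  A direct check shows that the interior of $C$ alternates between edges of the form $q_i p_{m-i}$ (sum $m$) and $p_{m-i} q_{i+1}$ (sum $m+1$), and that the boundary edges $p_1 q_{m-1}, q_{m-1} p_m, p_m q_1, p_2 q_m, q_m p_1$ have index sums $m, 2m-1, m+1, m+2, m+1$ respectively; in particular every edge of $C$ satisfies $i + j \ge m$.

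The main obstacle is writing down the cycle itself: in the bipartite graph on $\{p_1,\dots,p_m\} \cup \{q_1,\dots,q_m\}$ formed by the cross edges $p_i q_j$ with $i + j \ge m$, the vertices $p_1$ and $q_1$ have degree only $2$, so their cycle-neighbors are forced to be $\{q_{m-1}, q_m\}$ and $\{p_{m-1}, p_m\}$ respectively.  This rigidity is what drives the explicit construction above, and once $C$ is in hand the game-theoretic analysis is immediate: Builder plays the edges of $C$ one by one, and either Painter colors one of them blue (and Builder wins via the blue subpath argument after at most $k$ rounds) or all $k$ of them are red (and the cycle $C$ is a red $C_k$).
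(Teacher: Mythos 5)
Your proof is correct and takes essentially the same approach as the paper: both construct a Hamiltonian cycle on the $k$ vertices of $P\cup Q$ out of cross edges, each of which, if colored blue, closes a blue $p$--$q$ path on at least $k/2$ vertices, so that playing the $k$ cycle edges forces either a red $C_k$ or the desired blue path. Up to relabeling the paths from the far endpoints, your explicit cycle is nearly identical to the paper's (the two differ only by a transposition near the seam), so there is nothing further to add.
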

\begin{proof}
    Let $u_1,\dots,u_{k/2}$ denote the vertices in $P$ and let $v_1,\dots,v_{k/2}$ denote the vertices in $Q$.
    Builder selects edge $u_1v_1$, all edges $u_iv_{k/2-i+1}$ for $i \in [k/2]$, as well as $u_iv_{k/2-i+2}$ for each $2\le i \le k/2$; see \Cref{fig:cycle_knot}.

    These edges constitute a cycle on $k$ vertices so if Painter colors all the edges red, then Builder obtains a red $C_k$; otherwise, the blue paths are joined by a blue edge.
    The blue edge connects the paths in a way that the resulting blue path $R$ with endpoints $u_{k/2}, v_{k/2}$ skips $k/2$ vertices in the worst case.
    Hence, the final path $R$ contains at least $k/2$ vertices.
    \begin{figure}[ht]
        \centering
        \begin{tikzpicture}
            \def\maxlen{5}
            \pgfmathtruncatemacro\maxlenminusone{\maxlen-1}
            \pgfmathtruncatemacro\maxlenplusone{\maxlen+1}
            \node at (0,-0) {$P$};
            \node at (0,-2) {$Q$};
            \foreach \x in {1,...,\maxlen} {
                \node[main,label=+90:{$u_{\x}$}] (u\x) at (\x,-0){};
                \node[main,label=-90:{$v_{\x}$}] (v\x) at (\x,-2){};
            }
            \node[main,label=+90:{$p$}] (uend) at (\maxlenplusone,-0) {};
            \node[main,label=-90:{$q$}] (vend) at (\maxlenplusone,-2) {};
            \draw[blue] (uend)--(u\maxlen);
            \draw[blue] (vend)--(v\maxlen);
            \draw[blue,selected] (uend)--(u5)--(u4) edge[bend left=2] (v3) (v3)--(v4)--(v5)--(vend);
            \foreach \x in {1,...,\maxlenminusone} {
                \pgfmathtruncatemacro\xx{\x+1}
                \draw[blue] (u\x) -- (u\xx);
                \draw[blue] (v\x) -- (v\xx);
            }
            \foreach \x in {1,...,\maxlen} {
                \pgfmathtruncatemacro\xx{\maxlen+1-\x}
                \draw[red] (u\x) edge[bend right=2] (v\xx);
            }
            \foreach \x in {2,...,\maxlen} {
                \pgfmathtruncatemacro\xx{\maxlen+2-\x}
                \draw[red] (u\x) edge[bend left=2] (v\xx);
            }
            \draw[red] (u1) -- (v1);
        \end{tikzpicture}
        \caption{
            Cycle created to join two partial paths for $\frac k2=5$.
            A possible resulting path $R$ that would be created if $u_4v_3$ was blue is outlined.
        }%
        \label{fig:cycle_knot}
    \end{figure}
\end{proof}

Typically, we use \Cref{lem:join_2_paths} to join two long blue paths or joining endpoints of a single path to form a cycle while losing at most $k/2$ vertices in the process.
In most cases the following simpler version of this lemma would be sufficient for our needs.

\begin{lemma}\label{lem:join_2_paths2}
    Let $k\geq 4$ be an even integer and let $H$ be a blue line forest on $m>k/2$ vertices, with two components.
    Then $\rr_H(C_k,P_{m-k/2})\le k$.
\end{lemma}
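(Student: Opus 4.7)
The plan is to reduce to \Cref{lem:join_2_paths}. Let $P$ and $Q$ denote the two blue components of $H$, on $p$ and $q$ vertices respectively, so $p+q=m$.

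If $\min(p,q) \le k/2$, then the larger component alone is a blue path on $\max(p,q) \ge m - k/2$ vertices, so the host graph already contains $P_{m-k/2}$ and Builder needs zero rounds. Otherwise $p, q > k/2$. Label $P = u_1 u_2 \cdots u_p$ and $Q = v_1 v_2 \cdots v_q$, and take the subpaths $P' = u_{p-k/2+1} \cdots u_p$ and $Q' = v_{q-k/2+1} \cdots v_q$, each on exactly $k/2$ vertices. Apply \Cref{lem:join_2_paths} to $P'$ and $Q'$ using the \emph{inner} endpoints $u^* := u_{p-k/2+1}$ and $v^* := v_{q-k/2+1}$. Within $k$ rounds Builder either forces a red $C_k$ and wins, or obtains a blue path $R$ on at least $k/2$ vertices of $P' \cup Q'$ with endpoints $u^*$ and $v^*$. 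In the latter case, glue the prefix $u_1 \cdots u_{p-k/2}$ of $P$ to $R$ through the blue edge $u_{p-k/2} u^*$ of $P$, and similarly attach the reversed suffix $v_{q-k/2} \cdots v_1$ of $Q$ through the blue edge $v^* v_{q-k/2}$ of $Q$. The result is a single blue path on at least $(p - k/2) + k/2 + (q - k/2) = m - k/2$ vertices, as required.

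The only delicate point is choosing $u^*$ and $v^*$ as the inner endpoints of $P'$ and $Q'$, rather than the outer endpoints $u_p, v_q$; this guarantees that the output path $R$ of \Cref{lem:join_2_paths} can be extended through the unused prefixes of $P$ and $Q$ into one long blue path, instead of dead-ending at the ends of the original components. Beyond this bookkeeping, no new obstacles arise, since all the game-theoretic work is already packaged inside \Cref{lem:join_2_paths}.
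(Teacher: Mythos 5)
Your proof is correct and follows essentially the same route as the paper: handle the degenerate case where one component is short, restrict to the last $k/2$ vertices of each component, and invoke \Cref{lem:join_2_paths} with the inner endpoints so the resulting path reattaches to the unused prefixes. The paper's own proof is just a terser version of this, leaving the choice of endpoints implicit.
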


\begin{proof}
    Let $H_1$ and $H_2$ be the blue components of $H$.
    If one of the paths $H_1$, $H_2$ has less than $k/2$ vertices, then the other contains a blue path on $m-k/2$ vertices and Builder wins the game, doing nothing.
    Thus assume that $v(H_1),v(H_2)\ge k/2$.
    Let $P$, $Q$ be the blue paths induced on last $k/2$ vertices of $H_1$ and $H_2$, respectively.
    We apply \Cref{lem:join_2_paths} to $P$, $Q$ and obtain either a red $C_k$ or a blue path on at least $v(H_1)+v(H_2)-k/2=m-k/2$ vertices within $k$ rounds.
\end{proof}

\begin{theorem}\label{thm:cyclepath}
     If $k\geq 4$ is even and $n\in{\mathbb N}$, then $\rr(C_k,P_n) \le 2n+11k$.
\end{theorem}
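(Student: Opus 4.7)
The plan is to execute the first five steps of \Cref{alg:even_case} verbatim, simply skipping the two cycle-closing steps (closing to an approximate cycle and shortening it), which are unnecessary when the target is $P_n$ rather than $C_n$. The target slack compared to \Cref{thm:even} absorbs the overhead of the various path joins, giving the clean additive $\Oh(k)$ bound.

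I would first invoke \Cref{cor:gryt2} on an empty host graph with target blue length $n-1$ and target red length $m = 2k-5$. One branch already delivers a blue $P_n$ within $2(n-1+2k-5-1)-1 = 2n+4k-15$ rounds, which is well below the claimed bound; otherwise, after $2j + 4k - 11$ rounds for some $j \le n-2$, Builder has forced two vertex-disjoint monochromatic subgraphs: a blue $B = P_{j+1}$ and a red $P_{2k-4}$. In the latter case I extend $B$ via \Cref{lem:icicle} with target $N = n + k/2$, at a cost of $2N - 2(j+1) = 2n + k - 2j - 2$ additional rounds; since the strategy in that lemma is carried out on $V(B)$ plus fresh free vertices, it does not disturb the existing red $P_{2k-4}$. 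The output is a blue line forest on at least $N$ vertices with at most $k$ components; call this number $t$. If $t = 1$, the forest already contains a blue $P_N \supseteq P_n$. If $t \ge 3$, I apply \Cref{lem:join_to_2_paths} using the first $k + t - 4 \le 2k - 4$ vertices of the available red $P_{2k-4}$, in $5(t-2)$ further rounds, obtaining two blue paths on at least $N + t - 2$ vertices. Finally, for $t \ge 2$, \Cref{lem:join_2_paths2} merges the two components in $k$ rounds into a single blue path on at least $N + t - 2 - k/2 \ge n$ vertices.

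Summing the round counts in the worst case $t = k$ gives
\[
    (2j + 4k - 11) + (2n + k - 2j - 2) + 5(k - 2) + k = 2n + 11k - 23 \le 2n + 11k,
\]
as required. Because every ingredient is already in place, there is no real obstacle; the only design decision is choosing $N$ large enough that after losing up to $k/2$ vertices in the two-to-one join at least $n$ remain, and $N = n + k/2$ is the minimal safe choice that handles the branch $t = 2$ (where no vertices are gained in the many-to-two step) uniformly with the other subcases.
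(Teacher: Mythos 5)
Your proposal is correct and follows essentially the same route as the paper's proof: \Cref{cor:gryt2} to obtain either a blue $P_n$ or a red $P_{2k-4}$ plus a blue path, then \Cref{lem:icicle} with target $n+k/2$ vertices, then \Cref{lem:join_to_2_paths} and \Cref{lem:join_2_paths2}, with the same worst-case total $2n+11k-23$. The only (harmless) difference is that you make explicit the degenerate cases $t=1,2$ and the disjointness of the icicle construction from the red path, which the paper leaves implicit.
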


\begin{proof}

    First, Builder applies a strategy from \Cref{cor:gryt2} until a red path $P_{2k-4}$ or a blue path $P_n$ is created.
    If we got blue $P_n$, the game ends after at most $2(n-1+(2k-6))-1=2n+4k-15$ rounds.
    Otherwise, for some $m<n$ after at most $2(2k-5+m-1)-1=2m+4k-13$ rounds there are vertex disjoint paths: a red path $P_{2n-4}$ and a blue path $P_m$ in the host graph.
    Further we assume that the latter case holds.
    We also assume that Painter never creates a red $C_k$.

    In the next stage of the game Builder forces a blue line forest $L^{(t)}_{n+k/2}$ with $t\le k$, disjoint from the red $P_{2k-4}$.
    In view of \Cref{lem:icicle}, this stage lasts at most $\rr_{P_m}(C_k, {\mathcal L}^{(\le k)}_{n+k/2}) \le 2(n+k/2)-2m=2n-2m+k$ rounds.

    Now we have a blue $L^{(t)}_{n+k/2}$ and the red path $P_{2k-4}$.
    \Cref{lem:join_to_2_paths} ensures that within next $5(t-2)$ rounds Builder can force Painter to create a blue line forest $L$ on $t-2+n+k/2\ge n+k/2$ vertices, with at most two components.

    Finally, it follows from \Cref{lem:join_2_paths2} that within next $\rr_L(C_k,P_n)=k$ rounds Builder can force a blue path on $n$ vertices.

    In the worst case scenario, the game $\RR(C_k,P_n)$ lasts not longer than
    \[
        2m+4k-13+2n-2m+k+5(k-2)+k<2n+11k
    \]
    rounds.
\end{proof}

\begin{proof}[Proof of \Cref{thm:even}]
    Let $k\ge 4$ be an even integer and $n\ge 3k$.
    We assume that during the game Painter never creates a red $C_k$.
    In the first stage of the game Builder forces a blue path $P$ on $n+k/2$ vertices.
    Based on \Cref{thm:cyclepath} he can achieve this goal within $2n+k+11k=2n+12k$ rounds.
    Then, since $n>k/2$, we apply \Cref{lem:join_2_paths} to the ending segments of $P$ and conclude that Builder can force a blue cycle on at least $n$ vertices of $P$, within $k$ rounds.
    The blue cycle has $n+h$ vertices with $0\le h\le k/2$ and finally we use \Cref{thm:shortenall} to shorten it into a cycle $C_n$ within $3k+10$ rounds.
    The total number of rounds is at most $2n+12k+k+2k+10< 2n+20k$.
\end{proof}

\section{Short odd cycle vs long cycle}\label{sec:odd_buidler_strategy}

Compared to the even case, odd case is not easily put into a clear pseudo-code.
Hence, along with the formal description we present a diagram of the strategy in \Cref{fig:stagesCases} that shows how the complex parts of the algorithm fit into each other.

Builder's strategy in $\RR(C_k,C_n)$ is based on wish triangles.
This structure may appear during the construction and helps with achieving the goal.

\begin{definition}\label{def:wish_triangle}
    Let a \emph{wish triangle} be a colored $C_3$ with either three red edges or one red and two blue edges; see \Cref{fig:wish_triangles}.
\end{definition}

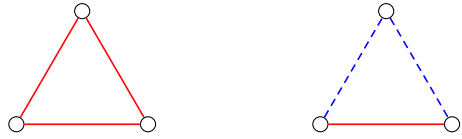
\begin{figure}[ht]
    \centering
    \begin{tikzpicture}
        \begin{scope}[shift={(0cm,0)}]
            \node[main] (a) at (90:1) {};
            \node[main] (b) at (210:1) {};
            \node[main] (c) at (330:1) {};
            \draw[red] (a) -- (b) -- (c) -- (a);
        \end{scope}
        \begin{scope}[shift={(4cm,0)}]
            \node[main] (a) at (90:1) {};
            \node[main] (b) at (210:1) {};
            \node[main] (c) at (330:1) {};
            \draw[blue] (b) -- (a) -- (c);
            \draw[red] (b) -- (c);
        \end{scope}
    \end{tikzpicture}
    \caption{The wish triangles}%
    \label{fig:wish_triangles}
\end{figure}

Before presenting Builder's strategy, we prove several auxiliary lemmata.
The first lemma of this section is similar to \Cref{lem:join_2_paths}, however, due to $k$ being odd we cannot use the threat of a red $C_k$ in the same way.
We show that a similar construction as in the proof of \Cref{lem:join_2_paths} works as long as we have a red path on three vertices connected to the blue paths in advance.

\newcommand{\wishtriangle}[1]{ 
    \node[main,label=-90:{$t_2$}] (t2) at (-1.3,0) {};
    \node[main,label=+90:{$t_1$}] (t1) at ($(t2)+(+40:1.3)$) {};
    \node[main,label=-90:{$t_3$}] (t3) at ($(t2)+(-40:1.3)$) {};
    \draw[#1] (t1) -- (t2) -- (t3);
    \draw[red] (t1) -- (t3);
}
\newcommand{\buildexample}[1]{ 
    \foreach \i in {1,...,#1}{
        \node[main,label=+90:{$u_\i$}] (u\i) at (\i,+1){};
        \node[main,label=-90:{$v_\i$}] (v\i) at (\i,-1){};
    }
    \foreach \i in {2,...,#1}{
        \pgfmathtruncatemacro\prev{\i-1}
        \draw[blue] (u\prev) -- (u\i);
        \draw[blue] (v\prev) -- (v\i);
    }
    \pgfmathtruncatemacro\lastvtx{#1+1}
    \node (uend) at (\lastvtx,+1) {};
    \node (vend) at (\lastvtx,-1) {};
    \draw[blue] (uend)--(u#1);
    \draw[blue] (vend)--(v#1);
}

\begin{lemma}\label{lem:red_gluepath}
    Let $k\ge 3$ be an odd integer.
    Suppose that a colored graph $H$ contains two blue vertex disjoint paths $P$ and $Q$ both on $\lfloor k/2 \rfloor$ vertices and a vertex $x\notin V(P\cup Q)$.
    Let $p,p'$ and $q,q'$ be endpoints of $P$ and $Q$, respectively.
    Suppose that the edges $xp'$, $xq'$ are red.
    Then within $k-2$ rounds Builder can force a red $C_k$ or a blue path $R$ on at least $\lceil k/2\rceil$ vertices of $P\cup Q$, with its endpoints $p$ and $q$.
\end{lemma}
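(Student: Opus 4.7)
The proof should be the odd-$k$ analogue of the cycle-completion construction used in \Cref{lem:join_2_paths}. Let $\ell=\lfloor k/2\rfloor$, so that $V(P)\cup V(Q)\cup\{x\}$ contains exactly $k$ vertices. Label the blue paths in order as $P = (p=u_1,u_2,\ldots,u_\ell=p')$ and $Q=(q=v_1,v_2,\ldots,v_\ell=q')$; with this convention the blue initial segment of $P$ starts at $p$ and walks toward the neighbour of $x$, and likewise for $Q$.

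Builder's plan is to select $k-2=2\ell-1$ cross edges between $\{u_i\}$ and $\{v_j\}$ which, together with the two pre-existing red edges $xp'=xu_\ell$ and $xq'=xv_\ell$, form a Hamilton cycle $C_k$ on $V(P)\cup V(Q)\cup\{x\}$. Equivalently, the selected edges form a Hamilton path from $u_\ell$ to $v_\ell$ on $V(P)\cup V(Q)$, which the two red edges at $x$ close into a $C_k$. If Painter colours every selected edge red, then Builder immediately wins with a red $C_k$. Otherwise, at least one selected cross edge $u_iv_j$ is blue, and Builder exhibits the blue path
\[
    R \;=\; p = u_1 - u_2 - \cdots - u_i - v_j - v_{j-1} - \cdots - v_1 = q
\]
obtained by concatenating the blue initial segment of $P$ from $u_1$ to $u_i$, the new blue edge $u_iv_j$, and the blue initial segment of $Q$ from $v_j$ back to $v_1$. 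This $R$ has its endpoints $p$ and $q$, lies entirely in $V(P)\cup V(Q)$, and contains exactly $i+j$ vertices.

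The design goal is therefore to choose the Hamilton path so that every cross edge $u_iv_j$ used satisfies $i+j\ge\lceil k/2\rceil=\ell+1$; then any blue edge among the selected ones produces an $R$ on at least $\lceil k/2\rceil$ vertices, as required. This is the odd-$k$ analogue of the explicit antidiagonal-plus-shifted-antidiagonal pattern used in \Cref{lem:join_2_paths}, for instance built from the families $u_iv_{\ell+1-i}$ and $u_iv_{\ell+2-i}$ which yield index sums $\ell+1$ and $\ell+2$.

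The main obstacle is carrying out this combinatorial construction under a tight budget. Since the budget of $k-2$ rounds leaves no slack, Builder cannot afford a Hamilton path edge of small index sum $i+j\le\ell$, and at the endpoints $u_\ell,v_\ell$ the Hamilton path must enter from vertices $v_j,u_i$ with $j\ge 1,\,i\ge 1$. Writing down the Hamilton path from $u_\ell$ to $v_\ell$ explicitly (for example by pairing increasing $u$-indices with decreasing $v$-indices so that the running index sum stays at least $\ell+1$, and using the $x$-edges to absorb the two unavoidable degree-$1$ endpoints at $u_\ell$ and $v_\ell$) is the technical heart of the proof, and mirrors the case analysis by parity that appears in \Cref{lem:join_2_paths}.
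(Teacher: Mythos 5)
Your high-level plan is exactly the paper's: select $k-2$ cross edges between $V(P)$ and $V(Q)$ forming a Hamiltonian path that the two red edges at $x$ close into a potential red $C_k$, and note that any blue cross edge $u_iv_j$ joins $P$ and $Q$ into a blue $p$--$q$ path. But you leave the construction itself --- which you rightly call the technical heart --- undone, and the design goal you set for it is unattainable. Write $\ell=\lfloor k/2\rfloor$. Under your labelling ($p=u_1$, $p'=u_\ell$ adjacent to $x$, similarly for $Q$), the Hamiltonian cross path must have endpoints $u_\ell$ and $v_\ell$, as those are the only vertices joined to $x$ in red. Summing $i+j$ over the edges of any such path gives $\sum(i+j)=2\ell(\ell+1)-\ell-\ell=2\ell^2$ (each vertex contributes its index times its degree, and only the two endpoints have degree one), whereas demanding $i+j\ge\ell+1$ on all $2\ell-1$ edges forces a total of at least $(2\ell-1)(\ell+1)=2\ell^2+\ell-1$. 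So for $k\ge 5$ no such path exists. Concretely, the families $u_iv_{\ell+1-i}$ and $u_iv_{\ell+2-i}$ that you propose form a Hamiltonian path with endpoints $u_1$ and $v_1$, which in your labelling are $p$ and $q$ rather than the neighbours of $x$; the red edges $xp'$, $xq'$ then fail to close it into a cycle and the red threat disappears.

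The paper resolves this by the opposite orientation: $u_1=p'$ and $v_1=q'$ are the neighbours of $x$, so precisely those two families of edges close into a red $C_k$ through $x$, while a blue cross edge of index sum $s\in\{\ell+1,\ell+2\}$ yields a blue $p$--$q$ path on $2\ell+2-s$ vertices. To repair your write-up, flip the labelling, recompute the blue path length as $2\ell+2-(i+j)$, and aim for $i+j\le\ell+2$ rather than $i+j\ge\ell+1$. Be aware that this guarantees only $\ell=\lfloor k/2\rfloor$ vertices in the worst case (the edges of sum $\ell+2$), and the counting argument above shows no Hamiltonian cross path can guarantee $\ell+1$; so the bound actually delivered is $\lfloor k/2\rfloor$ rather than the $\lceil k/2\rceil$ of the statement. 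This off-by-one is harmless for every later application of the lemma (which only needs that at most $\lfloor k/2\rfloor$ vertices are lost), but chasing the unachievable $\lceil k/2\rceil$ is what pushed your construction into an impossible corner.
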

\begin{proof}
    As the endpoints of paths $P$ and $Q$ are connected by a red path of length $2$ Builder may use the strategy from the proof of \Cref{lem:join_2_paths} while substituting one of its red edges by the red path through $x$.
    More precisely, Builder selects edges
    \begin{align*}
        u_iv_{\lfloor\frac k2\rfloor-i+1}     & \quad\text{for all $i \leq \Big\lfloor\frac k2\Big\rfloor$ and}\\
        u_{i+1}v_{\lfloor\frac k2\rfloor-i+1} & \quad\text{for all $i \leq \Big\lfloor\frac k2\Big\rfloor-1$.}
    \end{align*}
    \begin{figure}[ht]
        \centering
        \begin{tikzpicture}
            \begin{scope}
                \def\mypathlength{6}
                \buildexample{\mypathlength}
                \node[main,label=180:{$x$}] (x) at (-0.5,0) {};
                \draw[red] (u1) -- (x) -- (v1);
                \node at (0.2,+1) {$P$};
                \node at (0.2,-1) {$Q$};
                \foreach \i in {1,...,\mypathlength}{
                    \pgfmathtruncatemacro\j{\mypathlength+1-\i}
                    \draw[red] (u\i) edge[bend right=2] (v\j);
                }
                \foreach \i in {2,...,\mypathlength}{
                    \pgfmathtruncatemacro\j{\mypathlength+2-\i}
                    \draw[red] (u\i) edge[bend left=2] (v\j);
                }
                \draw[blue,selected] (uend) -- (u6) -- (u5) -- (u4) -- (u3) edge[bend left=2] (v5) (v5) -- (v6) -- (vend);
            \end{scope}
        \end{tikzpicture}
        \caption{
            A red $C_{13}$ created by a Builder using strategy of \Cref{lem:join_2_paths} while assuming he had red connections $xu_1$ and $xv_1$ to start with.
        }%
        \label{fig:forced_odd_red_cycle_or_join}
    \end{figure}
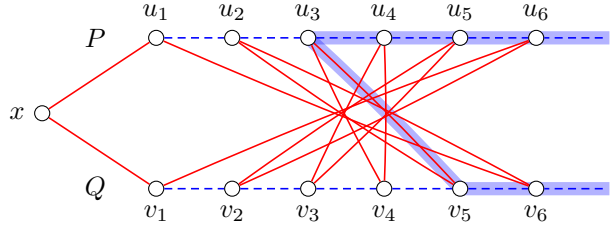

    With these $k-2$ edges Builder either creates a red cycle of length $k$ or he joins the blue paths as desired; see \Cref{fig:forced_odd_red_cycle_or_join}.
\end{proof}

We now show that we are able to join two blue paths if we have a wish triangle in hand.

\begin{lemma}\label{lem:gluepath}
    Let $k\ge 3$ be an odd integer.
    Suppose that the graph $H$ contains three vertex disjoint graphs: two blue paths $P$ and $Q$ both on at $\lfloor k/2\rfloor$ vertices and a wish triangle.
    Let $p$ and $q$ be one of the endpoints of $P$ and $Q$, respectively.
    Then within $k+4$ rounds Builder can force a red $C_k$ or a blue path $R$ on at least $\lceil k/2\rceil$ vertices of $P\cup Q$, with its endpoints $p$ and $q$.
\end{lemma}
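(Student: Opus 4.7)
The plan is to handle the two types of wish triangle separately, in each case using a short sequence of Builder moves to reduce to \Cref{lem:red_gluepath}. The guiding idea is that the triangle either supplies a vertex $x$ with red edges to the endpoints of (possibly extended) blue paths, so that the hypothesis of \Cref{lem:red_gluepath} holds, or, in the Type~2 case (one red edge), its blue edges $t_1t_2,t_2t_3$ form a ready-made blue bridge linking $P$ to $Q$.

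I would first have Builder select $p't_1$ and $q't_1$. If both are red, apply \Cref{lem:red_gluepath} with $P,Q$ and $x=t_1$, finishing in $2+(k-2)=k$ rounds. If both are blue, the blue edges along $P$, $p't_1$, $t_1q'$ and $Q$ already form a blue path from $p$ to $q$ through $t_1$ on $2\lfloor k/2\rfloor+1$ vertices, $2\lfloor k/2\rfloor\ge\lceil k/2\rceil$ of which lie in $V(P)\cup V(Q)$; done in two rounds. In the mixed branch, up to symmetry assume $p't_1$ red and $q't_1$ blue, so $Q^\ast:=Q\cup\{q't_1\}$ is blue with endpoint $t_1$.

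For a Type~1 (all-red) triangle, Builder next selects $p't_2$. If it comes back red, then the wish-triangle edge $t_1t_2$ is also red, so $p't_2$ and $t_2t_1$ form a red path of length~$2$ from $p'$ to $t_1$; \Cref{lem:red_gluepath} applied to $P$, a trimmed $\lfloor k/2\rfloor$-vertex sub-path of $Q^\ast$ ending at $t_1$, and $x=t_2$ yields a blue path which we extend by the blue edge of $Q$ incident to~$q$. If instead $p't_2$ is blue, then $P^\ast:=P\cup\{p't_2\}$ is blue with endpoint $t_2$ and the red edges $t_2t_3,t_3t_1$ give a length-$2$ red path from $t_2$ to $t_1$, so \Cref{lem:red_gluepath} applied to trimmed sub-paths of $P^\ast$ and $Q^\ast$ with $x=t_3$ finishes the job after reattaching $p$ and $q$. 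For a Type~2 triangle, Builder selects $p't_3$ instead: if red, then $t_1t_3$ red completes the length-$2$ red path $p't_3,t_3t_1$ and we proceed as above with $x=t_3$; if blue, no further moves are needed, because $P\cup\{p't_3\}$ together with the blue triangle edges $t_3t_2,t_2t_1$ and $Q^\ast$ forms a blue $p$-to-$q$ path visiting $p,\ldots,p',t_3,t_2,t_1,q',\ldots,q$. Every branch uses at most $3+(k-2)=k+1$ rounds.

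The main subtlety, and the one most likely to contain a slip, is the bookkeeping accompanying each invocation of \Cref{lem:red_gluepath} on an extended path: such a path has $\lfloor k/2\rfloor+1$ vertices instead of the $\lfloor k/2\rfloor$ required by the lemma, so it must first be trimmed by dropping $p$ or $q$ from the far end, and the blue path returned by the lemma must be reattached through the original blue edge of $P$ or $Q$ incident to the dropped vertex. I would verify, in each leaf of the case tree, that the vertex $x$ fed to \Cref{lem:red_gluepath} is not in the trimmed paths (which is automatic, since the wish triangle is vertex-disjoint from $P\cup Q$), and that after reattachment the final blue path still contains at least $\lceil k/2\rceil$ vertices of $V(P)\cup V(Q)$; a straightforward count handles this, and shows that the total number of rounds never exceeds $k+1$, comfortably within the claimed $k+4$.
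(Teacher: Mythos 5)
Your proposal is correct and follows essentially the same route as the paper's proof: probe edges from the wish-triangle vertices to the near endpoints of $P$ and $Q$, and in each resulting color pattern either read off a direct blue bridge or feed a length-two red path through a triangle vertex into \Cref{lem:red_gluepath}. The differences are cosmetic — you probe adaptively (two edges at $t_1$, then one at $t_2$ or $t_3$ depending on the triangle type) rather than all four at once, and your explicit trimming/reattachment bookkeeping is a rigorous version of the paper's informal "think of $t_1$ and $t_3$ as parts of the blue paths"; the one point worth adding is that for $k=3$ the reattachment edge of $Q$ does not exist, but in every branch where this occurs a red $C_3$ has already been forced.
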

\begin{proof}
    Let us denote the wish triangle vertices by $t_1$, $t_2$, and $t_3$ such that $t_1t_3$ is a red edge.
    Denote the vertices of $P$ by $u_1,u_2,\ldots,u_s$, the vertices of $Q$ by $v_1,v_2,\ldots,v_t$ and suppose that $u_s$ and $v_t$ will be the endpoints of the target path $R$.

    First, Builder selects edges $u_1t_1$, $v_1t_1$, $u_2t_3$, and $v_2t_3$.
    If both $u_1t_1$ and $v_1t_1$ are blue, then the blue paths are joined into a blue path on $k+1$ vertices so we reached the goal.
    If both $u_1t_1$ and $v_1t_1$ are red, then Builder uses \Cref{lem:red_gluepath} to join the paths within $k$ rounds, see \Cref{fig:two_same_edges_case}.
    Similarly, we argue the case where $u_2t_3$ and $v_2t_3$ are both blue or both red.

    \begin{figure}[ht]
        \centering
        \begin{tikzpicture}[yscale=.7]
            \begin{scope}[shift={(0cm,0)}]
                \node[main,label=180:{$x$}] (x) at (0,0) {};
                \buildexample{2}
                \draw[blue] (u1) -- (x) -- (v1);
            \end{scope}
            \begin{scope}[shift={(5.5cm,0)}]
                \node[main,label=180:{$x$}] (x) at (0,0) {};
                \buildexample{2}
                \draw[red] (u1) -- (x) -- (v1);
                \node at (1.8,0) {use \Cref{lem:red_gluepath}};
            \end{scope}
        \end{tikzpicture}
        \caption{
            Case analysis for the first four edges.
            Vertex $x$ represents either $t_1$ or $t_3$.
        }%
        \label{fig:two_same_edges_case}
    \end{figure}
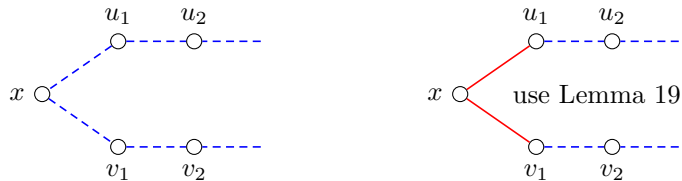

    If none of the previous cases occurs, then the new edges from $t_1$ have distinct colors, and similarly for $t_3$.
    We may assume, without loss of generality by swapping $P$ and $Q$, that $t_1u_1$ is blue and $t_1v_1$ is red.
    Now we distinguish two cases depending on colors of edges from $t_3$.

    If the edge $t_3u_1$ is red and $t_3v_1$ is blue, then depending on the wish triangle colors we have two subcases, depicted on \Cref{fig:cycle_knot2}.
    If the wish triangle is blue, then we are done because the blue paths $P$ and $Q$ are joined through $(u_1,t_1,t_2,t_3,v_1)$ blue path.
    If the wish triangle is red, then we may use the strategy of \Cref{lem:red_gluepath} because $t_1$ and $t_3$ can be thought of as parts of the blue paths and they are joined with a red path of length $2$.

    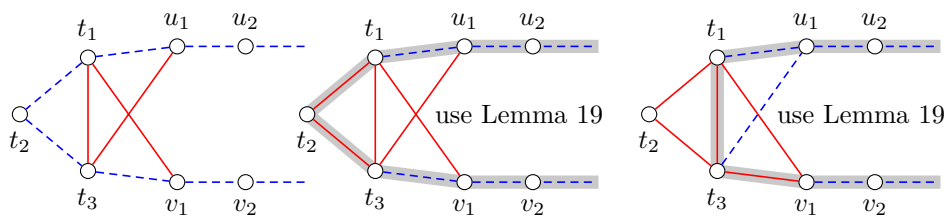
\begin{figure}[ht]
        \centering
        \begin{tikzpicture}[scale=.9]
            \begin{scope}[shift={(0cm,0)}]
                \wishtriangle{blue}
                \buildexample{2}
                \draw[blue] (u1) -- (t1) (t3) -- (v1);
                \draw[red] (u1) -- (t3) (t1) -- (v1);
            \end{scope}
            \begin{scope}[shift={(4.2cm,0)}]
                \wishtriangle{red}
                \buildexample{2}
                \draw[blue] (u1) -- (t1) (t3) -- (v1);
                \draw[red] (u1) -- (t3) (t1) -- (v1);
                \node at (1.8,0) {use \Cref{lem:red_gluepath}};
                \draw[black!70,selected] (uend) -- (u2) -- (u1) -- (t1) -- (t2) -- (t3) -- (v1) -- (v2) -- (vend);
            \end{scope}
            \begin{scope}[shift={(9.2cm,0)}]
                \wishtriangle{red}
                \buildexample{2}
                \draw[blue] (u1) -- (t1) (t3) -- (u1);
                \draw[red] (v1) -- (t3) (t1) -- (v1);
                \node at (1.8,0) {use \Cref{lem:red_gluepath}};
                \draw[black!70,selected] (uend) -- (u2) -- (u1) -- (t1) -- (t3) -- (v1) -- (v2) -- (vend);
            \end{scope}
        \end{tikzpicture}
        \caption{
            Case analysis if blue edges from $t_1$ and $t_3$ end up in either different (left and middle) or the same vertices (right).
            \textbf{Left:}
            The wish path $(t_1,t_2,t_3)$ is blue -- the blue paths get connected.
            \textbf{Middle:}
            The wish path is red -- use \Cref{lem:red_gluepath} on the outlined subgraph.
            \textbf{Right:}
            Regardless of the wish triangle colors we use \Cref{lem:red_gluepath} on the outlined subgraph.
        }%
        \label{fig:cycle_knot2}
    \end{figure}

    If the edge $t_3u_1$ is blue and $t_3v_1$ is red, then we have a single case that does not depend on the color of the wish triangle, depicted on \Cref{fig:cycle_knot2}.
    We can use \Cref{lem:red_gluepath} on the blue paths because one got extended by $u_1t_1$ and they are connected by a red path $t_1t_3v_1$.
    This covers the remaining cases and concludes the proof.
\end{proof}

The following two lemmata are corollaries of the previous one.
We omit their proofs since the first one is almost the same as the proof of \Cref{lem:join_2_paths2}, while \Cref{lem:gluepath2cycle} follows almost immediately from \Cref{lem:gluepath}.

\begin{lemma}\label{lem:gluepath2}
    Let $k\geq 3$ be an odd integer and $m>\lfloor k/2\rfloor$.
    Suppose that a colored graph $H$ contains two vertex disjoint graphs: a blue $L^{(2)}_m$ and a wish triangle.
    Then $\rr_H(C_k,P_{m-\lfloor k/2\rfloor})\le k+4$.
\end{lemma}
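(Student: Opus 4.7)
The plan is to mimic the proof of \Cref{lem:join_2_paths2}, substituting the cycle-creation threat from \Cref{lem:join_2_paths} with the wish-triangle version supplied by \Cref{lem:gluepath}. Let $H_1$ and $H_2$ denote the two components of the blue $L^{(2)}_m$.

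First, I would handle the trivial case: if either component already has at least $m-\lfloor k/2\rfloor$ vertices, then $H$ already contains a blue $P_{m-\lfloor k/2\rfloor}$ and Builder wins with $0\le k+4$ rounds. Equivalently, unless both components have at least $\lfloor k/2\rfloor$ vertices (using $m>\lfloor k/2\rfloor$), we are done.

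Next, in the remaining case, I would designate $P$ and $Q$ to be the subpaths induced on the last $\lfloor k/2\rfloor$ vertices of $H_1$ and $H_2$ respectively, choosing as distinguished endpoints $p\in V(P)$ and $q\in V(Q)$ the vertices through which $P$ and $Q$ attach to the remainders of $H_1$ and $H_2$ (or arbitrarily if the corresponding remainder is empty). Since $P$, $Q$, and the wish triangle are pairwise vertex disjoint inside $H$, the hypotheses of \Cref{lem:gluepath} are satisfied. Applying that lemma, within $k+4$ rounds Builder either forces a red $C_k$ (and wins) or forces a blue path $R$ on at least $\lceil k/2\rceil$ vertices of $P\cup Q$ with endpoints $p$ and $q$.

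The final step is the vertex count, which is the only real arithmetic: concatenating $H_1\setminus (V(P)\setminus\{p\})$, the blue path $R$, and $H_2\setminus (V(Q)\setminus\{q\})$ along $p$ and $q$ produces a blue path on at least
\[
    \bigl(v(H_1)-\lfloor k/2\rfloor\bigr)+\lceil k/2\rceil+\bigl(v(H_2)-\lfloor k/2\rfloor\bigr)
    = m-2\lfloor k/2\rfloor+\lceil k/2\rceil = m-\lfloor k/2\rfloor
\]
vertices, using $k$ odd so that $\lceil k/2\rceil-\lfloor k/2\rfloor=1$ and $2\lfloor k/2\rfloor=k-1$. There is no genuine obstacle here; the only thing to be slightly careful about is the bookkeeping of endpoints so that the remainders of $H_1$ and $H_2$ really do attach at $p$ and $q$, which is why we choose those endpoints at the attachment side rather than the free end.
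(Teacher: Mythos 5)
Your proposal is correct and follows exactly the route the paper intends: the paper omits this proof, noting it is "almost the same as the proof of Lemma~\ref{lem:join_2_paths2}," and your argument is precisely that adaptation — dispose of the case where one component is short, restrict to the last $\lfloor k/2\rfloor$ vertices of each component, invoke Lemma~\ref{lem:gluepath} in place of Lemma~\ref{lem:join_2_paths}, and do the $\lceil k/2\rceil-2\lfloor k/2\rfloor$ vertex count. Your care in placing $p$ and $q$ on the attachment side is exactly the right bookkeeping and matches the paper's usage.
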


\begin{lemma}\label{lem:gluepath2cycle}
Let $k\geq 3$ be an odd integer, $m\ge k$ and let ${\mathcal C}$ be the family of all cycles of length at least $m-\lfloor k/2\rfloor$ and not greater than $m$.
Suppose that a colored graph $H$ contains two vertex disjoint graphs: a blue path on $m$ vertices and a wish triangle.
Then $\rr_H(C_k,{\mathcal C})\le k+4$.
\end{lemma}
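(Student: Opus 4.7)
The plan is to reduce the claim to \Cref{lem:gluepath} by viewing the two ends of the blue path $B$ as the required pair of vertex disjoint blue paths. Write $B = b_1 b_2 \ldots b_m$ and set $P = b_1 b_2 \ldots b_{\lfloor k/2 \rfloor}$ and $Q = b_{m - \lfloor k/2 \rfloor + 1} \ldots b_m$. The hypothesis $m \ge k > 2\lfloor k/2 \rfloor$ guarantees that $P$ and $Q$ are vertex disjoint, and both are disjoint from the wish triangle by assumption, so $P$, $Q$, and the wish triangle together satisfy the hypotheses of \Cref{lem:gluepath}. As the distinguished endpoints I designate the inner ones, $p = b_{\lfloor k/2 \rfloor}$ and $q = b_{m - \lfloor k/2 \rfloor + 1}$.

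Next I apply \Cref{lem:gluepath}: within $k + 4$ rounds, Builder forces either a red $C_k$ (ending the game) or a blue path $R$ from $p$ to $q$ containing at least $\lceil k/2 \rceil$ vertices of $P \cup Q$. Let $B'$ denote the middle sub-path $b_{\lfloor k/2 \rfloor} b_{\lfloor k/2 \rfloor + 1} \ldots b_{m - \lfloor k/2 \rfloor + 1}$ of $B$, which is blue and connects $p$ to $q$. The interior vertices of $R$ lie in $V(P) \cup V(Q) \cup V(\text{wish triangle})$ with $\{p,q\}$ removed, and this set is disjoint from the interior of $B'$. Hence $R \cup B'$ is a simple blue cycle $C$ through $p$ and $q$, completed within the promised round budget.

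To conclude I must verify $v(C) \in [m - \lfloor k/2 \rfloor, m]$. Since $R$ and $B'$ meet only at $\{p, q\}$ and $|V(B')| = m - 2\lfloor k/2 \rfloor + 2$, we have $v(C) = |V(R)| + m - 2\lfloor k/2 \rfloor$. The lower bound $|V(R)| \ge \lceil k/2 \rceil$ immediately yields $v(C) \ge m - \lfloor k/2 \rfloor + 1$. The upper bound $v(C) \le m$ is the delicate point and I expect it to be the main obstacle: one has to inspect the sub-cases of the proof of \Cref{lem:gluepath}, tracking how many vertices of $P \cup Q$ and how many wish triangle vertices enter $R$ under each of Painter's responses to the four initial probes $u_1 t_1$, $v_1 t_1$, $u_2 t_3$, $v_2 t_3$, and argue that in every case Builder can either stop with a blue cycle already in $\mathcal C$ or, by the choice of which intermediate vertices to carry along, close up into one. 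The accounting is driven by the identity $v(C) = |V(R)| + m - 2\lfloor k/2 \rfloor$ and by the bound $|V(R) \cap V(P \cup Q)| \le 2\lfloor k/2 \rfloor$, so the only remaining task is controlling the number of wish triangle vertices used inside $R$ in each case.
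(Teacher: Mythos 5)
Your reduction is exactly the paper's: the authors give no proof of \Cref{lem:gluepath2cycle}, stating only that it ``follows almost immediately from \Cref{lem:gluepath}'', and the intended argument is precisely to apply \Cref{lem:gluepath} to the two end-segments of the blue path with the inner endpoints designated as $p,q$ and to close the cycle through the untouched middle segment. Your disjointness checks and the lower-bound computation $v(C)=|V(R)|+m-2\lfloor k/2\rfloor\ge m-\lfloor k/2\rfloor+1$ are correct.

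The upper bound you flag is a genuine issue, not just a delicate point you can expect to resolve by bookkeeping: \Cref{lem:gluepath} as stated gives no upper bound on $|V(R)|$, and in its proof the connecting path may pass through wish-triangle vertices. If both $u_1t_1$ and $v_1t_1$ come back blue, $R$ contains all of $P\cup Q$ plus $t_1$ and the cycle has $m+1$ vertices; if the connection runs through the blue path $u_1t_1t_2t_3v_1$, the cycle has $m+3$ vertices. So $v(C)\le m$ simply fails for this reduction, and no choice of which triangle vertices ``to carry along'' avoids it, since in those cases no vertex of $B$ is skipped. The honest conclusion of your argument is $m-\lfloor k/2\rfloor\le v(C)\le m+3$ within $k+4$ rounds, i.e., the family ${\mathcal C}$ in the lemma should extend up to $m+3$ (or one pays extra rounds, e.g.\ via \Cref{lem:shortenbluecyclebyOk}, to trim the surplus). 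This weaker form is all that the paper ever uses: in Stage II the resulting cycle is only required to have length between $n$ and $n+\Oh(k)$ before being handed to \Cref{thm:shortenall}, so the additive $3$ is harmless there. In short, your proof is the paper's proof, and the obstacle you identify is a (minor, easily repaired) inaccuracy in the lemma's statement rather than a missing idea on your side.
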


Although wish triangles help to glue blue paths together, they are not sufficient.
We now present lemmata that gradually lead to the full proof.
Know that our main goal is to produce a long blue path and proceed further in a similar way as in the even case.
To get there, we either use wish triangles or exploit the fact that they cannot be created.
Throughout the procedure, the longest blue path may be little broken which is covered by the following definition.

\begin{definition}
    Let an \emph{almost blue path} be a path that has at most one red edge and its remaining edges are blue.
\end{definition}

\begin{lemma}\label{lem:longerpath}
    Suppose the colored graph $F$ consists of two components: an almost blue path on $t \ge 1$ vertices and a wish triangle.
    Let $s \ge 1$.
    Then $\rr_F(C_k,P_{t+s})\le 2s+12k$.
\end{lemma}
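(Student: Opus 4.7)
The plan is to split the almost blue path $A$ into its blue components, extend the longer one by roughly $s$ vertices using the Grytczuk--Kierstead--Pra{\l}at strategy of \Cref{cor:gryt2}, and then employ the wish triangle via \Cref{lem:gluepath2} to weld the new extension back to the remaining piece of $A$. Throughout, we assume Painter never creates a red $C_k$, for otherwise Builder wins immediately.

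Concretely, let $B_L$ and $B_R$ denote the two blue sub-paths of $A$ separated by the at most one red edge of $A$ (with $B_R$ empty if $A$ has no red edge), labelled so that $v(B_L) \ge v(B_R)$ and hence $v(B_L) + v(B_R) = t$. In \textbf{Stage 1} Builder applies \Cref{cor:gryt2} starting from $B_L$, extending from its endpoint not adjacent to the red edge (any endpoint if $A$ is fully blue), playing exclusively on fresh vertices so as to remain disjoint from $B_R$ and the wish triangle. We set the corollary's target parameters to $K := v(B_L) - 1 + s + \lfloor k/2 \rfloor$ for the blue path and $M := 2k$ for the auxiliary red path. If this extension succeeds, the resulting blue path $B_L^+$ on $v(B_L) + s + \lfloor k/2 \rfloor$ vertices together with $B_R$ forms a blue line forest with at most two components on $t + s + \lfloor k/2 \rfloor$ vertices, vertex-disjoint from the wish triangle, and a single application of \Cref{lem:gluepath2} produces the desired blue $P_{t+s}$ in $k + 4$ further rounds, for a total of at most $2s + 6k$ rounds.

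If instead Stage 1 fails, then after $2(j - e(B_L) + M) - 1 = 2j - 2v(B_L) + 4k + 1$ rounds (for some $j < K$) the host graph contains a red path of length $M = 2k$ together with a blue extension $B^*$ of $B_L$ on $j + 1$ vertices. \textbf{Stage 2} then applies \Cref{lem:icicle} to $B^*$ with target $n' := v(B_L) + s + \lfloor k/2 \rfloor$ to produce, in $2n' - 2(j+1)$ rounds, a blue line forest of at most $k$ components on $n'$ vertices disjoint from $B_R$ and the wish triangle. Counting $B_R$ as one more component yields a blue line forest of at most $k + 1$ components on exactly $t + s + \lfloor k/2 \rfloor$ vertices, and the accompanying red path, of length $2k \ge k + (k+1) - 4$, lets us invoke \Cref{lem:join_to_2_paths} to reduce to at most two components within $5(k-1)$ rounds. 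A final \Cref{lem:gluepath2} with the wish triangle then yields the blue $P_{t+s}$ in $k + 4$ rounds. The $j$-dependent terms cancel in the sum, and Stage 2 totals $2s + 11k - 3 \le 2s + 12k$.

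The main subtlety I anticipate is the disjointness bookkeeping across the four sequentially applied lemmata: at every transition the wish triangle, $B_R$, and the red path produced by \Cref{cor:gryt2} must remain intact and mutually vertex-disjoint from whatever Builder constructs next. This is always possible because the infinite board lets Builder pick fresh vertices in every round of \Cref{cor:gryt2} and \Cref{lem:icicle}, but it requires a careful check that the hypotheses (in particular the red-path length needed by \Cref{lem:join_to_2_paths} in the worst case of $k+1$ components, and the two-component input expected by \Cref{lem:gluepath2}) are satisfied in every branch of the strategy.
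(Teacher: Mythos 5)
Your proof follows essentially the same three-stage route as the paper's: extend one blue component of the almost blue path via \Cref{cor:gryt2}, fall back to \Cref{lem:icicle} followed by \Cref{lem:join_to_2_paths} when a long red path is produced instead, and in either branch finish by welding the two remaining blue components together with the wish triangle via \Cref{lem:gluepath2}. Your parameter choices (target lengths with $\lfloor k/2\rfloor$ slack, a slightly longer auxiliary red path) differ only cosmetically from the paper's, and your round counts and disjointness bookkeeping check out within the $2s+12k$ budget, so the argument is correct.
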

\begin{proof}
    We assume that Painter never creates a red $C_k$.
    Let $T$ be the wish triangle in $F$.
    If the almost blue path $F\setminus T$ contains a red edge, then we denote by $H$, $H'$ two maximal blue path contained in $F\setminus T$; otherwise we put $H=F\setminus T$ and make a technical assumption that $v(H')=0$.
    We divide the game into three stages.

    In the first stage, Builder applies a strategy from \Cref{cor:gryt2}, extending $H$ or creating a new red path.
    This results in a red path $P_{2k-4}$ or a blue path $P_{v(H)+s+k}$.
    If we got a blue path on $v(H)+s+k$ vertices, the game proceeds to the third stage (the second stage is omitted) and then the first stage lasts at most $2(s+k+2k-6)-1<2s+6k$ rounds.
    Otherwise, for some $m<s+k$ after at most $2(m+2k-5)-1<2m+4k$ rounds new vertex disjoint paths appear in the host graph: a red path $P'$ on $2k-4$ vertices and a blue path $P$ on $v(H)+m$ vertices.
    The game proceeds to the second stage.

    In the second, stage Builder pretends he plays the game $\RR_P(C_k, {\mathcal L}^{(\le k)}_{t'})$ with $t'=t-v(H')+s+k$, on the board vertex disjoint from $(V(F)\setminus V(H))\cup V(P)$.
    Based on \Cref{lem:icicle}, after at most $2t'-2v(P)=2(t-v(H')+s+k)-2(v(H)+m)=2s+2k-2m$ rounds of the second stage we have a blue line forest on $t-v(H')+s+k$ vertices, with at most $k$ blue components.
    This line forest together with $H'$ (if any) form a blue line forest $L$ with at most $k+1$ components, on $t+s+k$ vertices.
    Let us recall that the host graph contains also the red path $P'$ on $2k-4$ vertices.
    Next, we use \Cref{lem:join_to_2_paths} to join the components of $L$ and to obtain two blue paths on $t+s+k$ vertices within $\rr_L(C_k,{\mathcal L}^{(\le 2)}_{t+s+k})\le 5(t-2)\le 5(k-1)$ rounds.
    The second stage last at most $2s+2k-2m+5(k-1)<2s+7k-2m-5$ rounds.

    The third stage begins after less than $2s+6k$ rounds of the first stage if $v(H')\neq 0$ or after less than $(2m+4k)+(2s+7k-5)=2s+11k-5$ rounds of the first and the second stages if $v(H')=0$.
    In both cases at the start of the third stage the host graph contains a blue $L^{(2)}_{t+s+k}$ and the wish triangle $T$.
    We use the wish triangle to perform \Cref{lem:gluepath2} joining two blue paths of $L^{(2)}_{t+s+k}$ into a single blue path within $k+4$ rounds.
    Hence we obtain a blue path on at least $t+s+k-k/2>t+s$ vertices.

    In total, the number of rounds in the game $\RR_F(C_k,P_{t+s})$ is upper bounded by $2s+11k-5+k+4 < 2s+12k$.
\end{proof}

\begin{lemma}\label{lem:extendblue}
    Suppose $H$ consists of two components: a blue edge $v_1v_2$ and an almost blue path $P_t$ on $t\ge 3$ vertices.
    Then Builder in the game $\RR_H(C_k,C_n)$ has a strategy such that
    \begin{itemize}
        \item either after at most 2 rounds there is an almost blue path on $t+2$ vertices,
        \item or after at most 4 rounds there is a colored graph with two components: a~wish triangle and an almost blue path on $t-2$ vertices.
    \end{itemize}
\end{lemma}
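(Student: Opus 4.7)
My plan is to let Builder split his strategy into two phases. Write $P_t = u_1 u_2 \cdots u_t$ and set $u := u_1$, $w := u_t$, so that $u_2$ is the neighbour of $u$ in $P_t$.

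In the first phase Builder plays two rounds, querying $uv_1$ and, if $uv_1$ is painted red, then $uv_2$. Whenever one of these two edges is painted blue, Builder appends it to the pre-existing blue edge $v_1v_2$ and obtains an almost blue path $w \cdots u - v_i - v_{3-i}$ on $t+2$ vertices (the only red edge permitted being whichever red edge was already present in $P_t$). This realises outcome (i) within at most two rounds, and Builder stops. Otherwise both $uv_1$ and $uv_2$ have been painted red, and the host now consists of $P_t$ almost blue, the blue edge $v_1v_2$, and the two red edges $uv_1, uv_2$.

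In the second phase Builder focuses on the $K_4$ with vertex set $\{u, u_2, v_1, v_2\}$, three of whose edges ($uv_1, uv_2$ red and $v_1v_2$ blue) are fixed by Phase 1 and the hypothesis, while the fourth edge $uu_2$ inherits its colour from $P_t$. Builder queries $u_2v_1$ in round 3 and $u_2v_2$ in round 4, adapting the ordering to Painter's answers. A short case analysis on the colour of $uu_2$ shows that a wish triangle on $\{u, u_2, v_1, v_2\}$ is forced in every branch: when $uu_2$ is blue, each of the triangles $\{u, u_2, v_i\}$ contains the pattern (blue, red, ?) and becomes a $1R{+}2B$ wish as soon as Painter paints a cross edge blue, while the triangle $\{u_2, v_1, v_2\}$ becomes a $1R{+}2B$ wish when Painter mixes colours on the two new cross edges; when $uu_2$ is red, the triangles $\{u, u_2, v_i\}$ have the pattern (red, red, ?) and the symmetric analysis forces a $3R$ wish. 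In every subcase the wish triangle uses only vertices of $\{u, u_2, v_1, v_2\}$, so the suffix $u_3 u_4 \cdots w$ of $P_t$ is vertex-disjoint from it and, as a subpath of an almost blue path, is itself an almost blue path on exactly $t-2$ vertices. This realises outcome (ii) within at most four rounds.

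The main obstacle is the adaptive coordination of rounds 3 and 4, and in particular the only branch that is not immediately covered, where Painter aligns both cross edges $u_2v_i$ with the colour of $uu_2$. In that branch none of the three triangles above is immediately a wish, and instead one argues directly that an almost blue path on $t+2$ vertices is already present in the host: it is obtained by rerouting through $v_1v_2$ using exactly one of the red cross edges $uv_i$ as the single permitted red edge, provided that Builder initially chose the endpoint $u$ so that the red edge of $P_t$ (if any) is either absent or adjacent to $u$. Making this rerouting go through in every configuration of $P_t$ is the most delicate step and is what pins down the endpoint choice and the ordering used in the second phase.
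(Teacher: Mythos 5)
There is a genuine gap in the second phase. You anchor the construction at an endpoint $u=u_1$ of $P_t$, but the unique red edge of $P_t$ (when it exists) may lie in the interior of the path, in which case it is incident to neither endpoint and your proviso ``the red edge of $P_t$ is either absent or adjacent to $u$'' cannot be satisfied by any choice of $u$. Concretely, suppose $P_t$ has a red edge $u_ju_{j+1}$ with $j\ge 2$. Painter colors $uv_1,uv_2$ red in Phase~1, and then (since $uu_2$ is blue) colors both cross edges $u_2v_1,u_2v_2$ red in Phase~2. No triangle on $\{u,u_2,v_1,v_2\}$ is a wish triangle: every triangle there has exactly two red edges and one blue edge. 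And no almost blue path on $t+2$ vertices exists in the host either: the only edges joining $\{v_1,v_2\}$ to $V(P_t)$ are the four red connectors at $u_1,u_2$, so any Hamiltonian path on $V(P_t)\cup\{v_1,v_2\}$ must use at least one red connector, and it must also use the red edge $u_ju_{j+1}$ because the segment $u_{j+1}\cdots u_t$ is reachable only through that edge -- hence at least two red edges. So after $4$ rounds neither outcome of the lemma is achieved, and Painter has defeated the strategy.

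The repair is to anchor at the red edge of $P_t$ itself rather than at an endpoint: when $P_t$ is entirely blue one round suffices (join an endpoint to $v_1$; either color yields an almost blue path on $t+2$ vertices), and otherwise, letting $u_1u_2$ denote the red edge of $P_t$, Builder plays $u_1v_1$ and $u_2v_2$. If either is blue, replacing $u_1u_2$ by the detour $(u_1,v_1,v_2,u_2)$ gives an almost blue path on $t+2$ vertices with at most one red edge. If both are red, one further probe $u_1v_2$ forces a wish triangle no matter its color (red gives the all-red triangle $u_1u_2v_2$, blue gives the one-red-two-blue triangle $u_1v_1v_2$), and a fourth edge joining the blue neighbours of $u_1$ and $u_2$ on $P_t$ reassembles the remaining $t-2$ vertices into an almost blue path disjoint from the triangle. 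This keeps the red edge of $P_t$ inside the triangles under consideration, which is exactly what your endpoint-based $K_4$ cannot guarantee.
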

\begin{proof}
    If the colored path $P_t$ contains no red edge, then connect one of its endpoints to $v_1$.
    No matter what the color of the new edge is, we get an almost blue path on $t+2$ vertices.

    In the other case, let us denote by $u_1u_2$ the red edge of $P_t$.
    Builder selects edges $u_1v_1$ and $u_2v_2$, see \Cref{fig:almost_blue_path_extension} for depiction of the following case analysis.
    If one of the new edges is blue, then the path obtained from $P_t$ by replacing $u_1u_2$ with the path $(u_1,v_1,v_2,u_2)$ gives the result.
    If both are red, Builder selects edge $u_1v_2$ and, if $t\ge 4$, also the edge incident to two blue neighbors of $u_1$ and $u_2$ at the path $P_t$.
    If $u_1v_2$ is red, then $u_1u_2v_2$ constitute a red wish triangle; if it is blue then $u_1v_1v_2$ is a blue-blue-red wish triangle.
    In both cases we obtained a wish triangle and the almost blue path gets shorter by two vertices.
    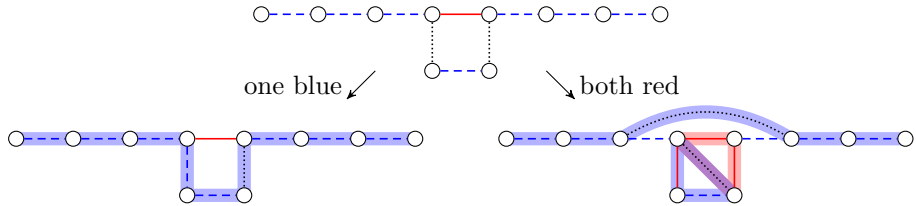
\begin{figure}[ht]
        \centering
        \begin{tikzpicture}[scale=.75]
            \begin{scope}
                \foreach \name/\x in {x1/1,x2/2,x3/3,u1/4,u2/5,x6/6,x7/7,x8/8}{
                    \node[main] (\name) at (\x,+1){};
                }
                \draw[blue] (x1) -- (x2) -- (x3) -- (u1) (u2) -- (x6) -- (x7) -- (x8);
                \draw[red] (u1) -- (u2);
                \node[main] (v1) at (4,0){};
                \node[main] (v2) at (5,0){};
                \draw[blue] (v1) -- (v2);
                \draw[tobe] (u1) -- (v1) (u2) -- (v2);
            \end{scope}
            \begin{scope}[shift={(-4.3cm,-2.2cm)}]
                \foreach \name/\x in {x1/1,x2/2,x3/3,u1/4,u2/5,x6/6,x7/7,x8/8}{
                    \node[main] (\name) at (\x,+1){};
                }
                \draw[blue] (x1) -- (x2) -- (x3) -- (u1) (u2) -- (x6) -- (x7) -- (x8);
                \draw[red] (u1) -- (u2);
                \node[main] (v1) at (4,0){};
                \node[main] (v2) at (5,0){};
                \draw[blue] (v1) -- (v2);
                \draw[blue] (u1) -- (v1);
                \draw[tobe] (u2) -- (v2);
                \draw[blue,selected] (x1) -- (x2) -- (x3) -- (u1) -- (v1) -- (v2) -- (u2) -- (x6) -- (x7) -- (x8);
            \end{scope}
            \begin{scope}[shift={(+4.3cm,-2.2cm)}]
                \foreach \name/\x in {x1/1,x2/2,x3/3,u1/4,u2/5,x6/6,x7/7,x8/8}{
                    \node[main] (\name) at (\x,+1){};
                }
                \draw[blue] (x1) -- (x2) -- (x3) -- (u1) (u2) -- (x6) -- (x7) -- (x8);
                \draw[red] (u1) -- (u2);
                \node[main] (v1) at (4,0){};
                \node[main] (v2) at (5,0){};
                \draw[blue] (v1) -- (v2);
                \draw[red] (u1) -- (v1);
                \draw[red] (u2) -- (v2);
                \draw[tobe] (u1) -- (v2);
                \draw[tobe] (x3) edge[bend left] (x6);
                \draw[red,selected] (u1) -- (u2) -- (v2) -- (u1);
                \draw[blue,selected] (u1) -- (v1) -- (v2) -- (u1);
                \draw[blue,selected] (x1) -- (x2) -- (x3) edge[bend left] (x6) (x6) -- (x7) -- (x8);
            \end{scope}
            \draw[->] (3,0) -- node[label=180:{one blue}]{} ++(-.5,-.5);
            \draw[->] (6,0) -- node[label=0:{both red}]{} ++(.5,-.5);
        \end{tikzpicture}
        \caption{
            Builder's strategy for extending an almost blue path by two or forcing a wish triangle.
            Final results of the strategy are shown by the outlined edges.
        }%
        \label{fig:almost_blue_path_extension}
    \end{figure}
\end{proof}

\begin{lemma}\label{lem:path2cycle}
    Suppose $k\ge 3$ is an odd integer and $H$ is an almost blue path on $t\ge 2k$ vertices.
    Let ${\mathcal C}$ be the family of all cycles of length at least $(t-k)/2$ and not greater than $t$.
    Then $\rr_H(C_k,{\mathcal C})\le k+1$.
\end{lemma}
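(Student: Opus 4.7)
The plan is to imitate the odd-$k$ strategy from the proof of \Cref{lem:shortenbluecycle}, adapting it from a cycle to a path. Label the vertices of $H$ as $v_0,\dots,v_{t-1}$ in path order, and if $H$ has a red edge denote its position $v_rv_{r+1}$; I assume throughout that Painter never creates a red $C_k$.

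First I would pick an integer $q$ close to $(t-k)/2$, satisfying both $q+1\ge(t-k)/2$ and $2q+\lfloor k/2\rfloor-1\le t-1$, together with a shift $a\in\{0,\,r+1\}$ chosen so that the window of indices $[a,\,a+2q+\lfloor k/2\rfloor-1]$ lies inside a single blue segment of $H$; the hypothesis $t\ge 2k$ makes all these constraints satisfiable whenever the red edge of $H$ is not too close to the middle of the path.

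The first two moves (``stage~1'') are then the chords $v_av_{a+q}$ and $v_{a+q}v_{a+2q}$. In contrast to the cycle setting, where only ``both blue'' ends the game, in a path a single blue answer already closes a blue cycle of length $q+1\in[(t-k)/2,t]$ using the reserved blue sub-path. If both chords are red, Builder has forced a red $P_2$ from $v_a$ to $v_{a+2q}$, which is exactly the stage-2 input from \Cref{lem:shortenbluecycle}. In ``stage~2'' Builder then plays the same $k-2$ chords of length in $\{2q-1,2q,2q+1\}$ forming a $P_{k-1}$ from $v_a$ to $v_{a+2q}$: either they are all red and complete a red $C_k$ with the red $P_2$, or a blue chord closes a blue cycle of length $\in\{2q,2q+1,2q+2\}\subseteq[(t-k)/2,t]$. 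The total is $2+(k-2)=k$ rounds, one under the allowed budget.

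The main obstacle I anticipate is the case where $H$'s red edge sits in the middle, so that neither candidate shift $a$ places the whole window inside a blue segment. There both blue sub-paths have only about $t/2$ vertices and the $q$ above is too large to fit into either. My fallback is to run the same scheme inside the longer sub-path with a smaller $q'\ge\lceil(t-k)/4\rceil$, which is large enough for the stage-2 cycles to have length $\ge(t-k)/2$, and to spend the spare $(k{+}1)$-th move extending the red edge $v_rv_{r+1}$ by one further red chord into a red $P_3$ that plays the role of the stage-1 red $P_2$. The delicate bookkeeping is then checking that every cycle produced avoids the red edge of $H$ and has length in $[(t-k)/2,t]$; the hypothesis $t\ge 2k$ is exactly what keeps those inequalities consistent.
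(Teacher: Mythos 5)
Your ``main case'' is fine as far as it goes, but it only applies when the red edge of $H$ sits within roughly $\lceil k/2\rceil+2$ vertices of one \emph{end} of the path: you need a single blue segment containing a window of $2q+\lfloor k/2\rfloor\ge t-\lceil k/2\rceil-2$ consecutive vertices, which forces the other blue segment to be tiny. The generic situation is therefore your fallback, and that is where the argument breaks. If you keep the two stage-1 chords but shrink to $q'\approx(t-k)/4$, a blue answer to $v_av_{a+q'}$ or $v_{a+q'}v_{a+2q'}$ closes a blue cycle of length only $q'+1\approx(t-k)/4$, which is below the required $(t-k)/2$ as soon as $t>k+4$. If instead you replace stage~1 by a single chord extending $v_rv_{r+1}$ into a red path of length $2$, Painter simply colors that chord blue: a blue edge from $v_{r+1}$ to a vertex $v_y$ of the other blue segment closes no cycle (any blue walk between its endpoints would have to cross the red edge $v_rv_{r+1}$); it merely yields a shorter blue path on about $t-2q'$ vertices, and closing that path into a cycle afterwards costs $k$ further rounds and only guarantees length about $(t+k)/4$, which is again less than $(t-k)/2$ once $t>3k$. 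So in the middle-red-edge case you have no move that simultaneously threatens a sufficiently long blue cycle and builds the red path of length $2$ that your stage~2 needs.

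The paper's proof runs on the same engine — a red path of length $2$ feeding the $k-2$ zigzag chords of \Cref{lem:red_gluepath} — but aims that red path at different vertices, and this is the idea you are missing. Let $P$ be the longer blue segment, with endpoints $v_1$ (incident to the red edge $v_1v_1'$ of $H$) and $v_2$; Builder plays the single edge $v_1'v_2$. If it is blue, all $t$ vertices now lie on one blue path and you are reduced to your all-blue case. If it is red, then $v_1v_1'v_2$ is a red path of length $2$ between the two \emph{endpoints} of $P$, so \Cref{lem:red_gluepath} closes all of $P$ into a blue cycle losing only $\lfloor k/2\rfloor$ vertices, of length at least $\lceil t/2\rceil-\lfloor k/2\rfloor\ge(t-k)/2$, within $k-2$ further rounds. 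Targeting the two endpoints of the longer segment (so that the entire segment becomes the cycle) avoids exactly the room problem that defeats your interior-chord scheme when each blue segment has only about $t/2$ vertices.
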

\begin{proof}
    We assume that Painter never creates a red $C_k$.
    We divide the game into two stages but skip the first stage if $H$ is a blue path.

    In the first stage, for $H$ which is not blue, denote its red edge by $v_1v'_1$.
    There are two maximal blue paths $P$, $P'$ in $H$ so assume that $P$ has endpoints $v_1$ and $v_2$, while $v'_1$ and $v'_2$ are the endpoints of $P'$.
    We can assume that $P$ is not shorter than $P'$.
    Builder selects the edge $v'_1v_2$.
    If Painter colors it blue, we obtain a blue path $H'$ on the vertex set $V(H)$ and proceed to the second stage.
    Otherwise, both edges $v_1v'_1$ and $v'_1v_2$ are red.
    Then endings of $P$ can be through of as two paths $Q_1,Q_2$, on $\lfloor k/2\rfloor$ vertices each, such that their ends $v_1\in V(Q_1)$ and $v_2\in V(Q_2)$ are connected with a red path of length 2.
    Furthermore, $Q_1$ and $Q_2$ are disjoint since $v(Q_1)+v(Q_2)<k\le t/2\le |V(P)|$.
    Based on \Cref{lem:red_gluepath} within $k-2$ next rounds Builder can join them so that at most $\lfloor k/2\rfloor$ vertices of $Q\cup Q'$ are not contained in the resulting blue path $R$.
    Taking into account the other blue edges of $P$, we have a blue cycle on at least $v(P)-\lfloor k/2\rfloor\ge (t-k)/2$ vertices of $P$.
    The game ends in this case.

    In the second stage, we have a blue path $H'$ on $t$ vertices in the host graph (if $H$ was a blue path, then $H'=H$).
    Let $u_1,u_2,\dots,u_{t}$ be consecutive vertices of $H'$.
    Builder selects edges $u_1u_{\lfloor t/2\rfloor}$ and $u_{\lfloor t/2\rfloor}u_{t}$.
    If Painter colors one of them blue, then Builder gets a blue cycle on at least $\lfloor t/2\rfloor$ vertices and we are done.
    Otherwise, both of the edges are red so we apply \Cref{lem:red_gluepath} for two ending blue paths of $H'$ similarly as in the previous stage and conclude that within the next $k-2$ rounds Builder can force a blue cycle on at least $t-\lfloor k/2\rfloor$ vertices of $H'$.
    Summarizing, the second stage last at most $2+k-2=k$ rounds and results in a blue cycle of length at least $(t-k)/2$.

    It is not hard to verify that in all cases the game lasts at most $1+k$ rounds and the vertex set of the target blue cycle is contained in $V(H)$.
\end{proof}

Aside from an almost blue path Builder's strategy shall exploit another structure which we formally define next -- a blue path interlaced with every other vertex of a red path of twice the length.
\begin{definition}
    Let \emph{dragon tail} of length $m$ denoted by $\dtail(m)$ be a graph that consists of a red path of length $2m$ where odd vertices are connected into a blue path, see \Cref{fig:dragon_tail}.
    \begin{figure}[ht]
        \centering
        \begin{tikzpicture}[scale=.8]
            \tikzstyle{every node}=[main]
            \node (p1) at (0,0) {};
            \foreach \x in {2,...,8}{
                \pgfmathtruncatemacro\xx{\x-1}
                \node (p\x) at (\xx,0) {};
                \node (m\x) at ($(p\xx)+(60:1)$) {};
                \draw[blue] (p\xx) -- (p\x);
                \draw[red] (p\xx) -- (m\x) -- (p\x);
            }
        \end{tikzpicture}
        \caption{Dragon tail graph $\dtail(7)$}
        \label{fig:dragon_tail}
    \end{figure}
\end{definition}

\begin{lemma}\label{lem:redmatching}
    Suppose $H$ is a red matching of $m$ edges.
    Then Builder in the game $\RR_H(C_k,C_n)$ has a strategy such that for two starting vertices $s_1,s_2\notin V(H)$ one of the following holds.
    \begin{itemize}
        \item For some $t<m$, after at most $2t+6$ rounds there is a colored graph that contains vertex disjoint: two wish triangles and a blue path on $t$ vertices such that if $t\ge 2$, then the path starts in $s_1$ or $s_2$.
        \item After $2m$ rounds there is a dragon tail of length $m-2$ that starts in $s_1$ or $s_2$.
    \end{itemize}
\end{lemma}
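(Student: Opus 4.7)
The plan is to grow a dragon tail one segment at a time, using the matching edges of $H$ in order and always extending from the current endpoint $p$ of the blue path. For each matching edge $u_iv_i$, Builder will select the two edges $pu_i$ and $pv_i$, spending two rounds per matching edge. If Painter colors them with different colors, then the red one together with $u_iv_i$ produces a red path of length two from $p$, while the blue one supplies the new blue path edge; this is exactly one additional dragon tail segment, and the new endpoint becomes whichever of $u_i,v_i$ is joined to $p$ by the blue edge. If Painter colors both new edges the same color, then $\{p,u_i,v_i\}$ spans a wish triangle (three red, or one red and two blue), which Builder records as a \emph{failure}; in that case Builder removes $p$ from the blue path, so that the triangle becomes vertex-disjoint from the path, and takes the predecessor of $p$ as the new endpoint.

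Builder keeps a counter of wish triangles, starts with $s_1$ as the current starting vertex, and maintains the blue path accordingly. If a failure empties the blue path (which happens only when the absorbed vertex is the current starting vertex), Builder switches the starting vertex to $s_2$ and resumes with the next matching edge. The procedure halts either when the counter reaches $2$, producing the first option, or when all $m$ matching edges have been processed, producing the second option.

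For the time bound in the first option, let $s$ be the number of successes preceding the second failure; then $s+2$ matching edges have been used, so $2(s+2)$ rounds have elapsed, and $s+2\le m$ gives $t<m$. If neither failure absorbed the initial vertex $s_1$, the remaining blue path starts at $s_1$ on $t=s-1$ vertices and $2(s+2)=2t+6$; otherwise it starts at $s_2$ on $t=s$ vertices and $2(s+2)=2t+4\le 2t+6$. For the second option, the fact that the first option was not triggered means at most one failure has occurred during the $m$ attempts, so the dragon tail has length at least $m-1\ge m-2$, rooted at $s_1$ or at $s_2$ depending on whether the unique failure (if any) absorbed $s_1$.

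The delicate point is the bookkeeping after a failure: the host graph retains the edges incident to the absorbed vertex, and future successful extensions from the new endpoint will add further blue edges at that endpoint. Nevertheless, I will take the chosen dragon tail to be just the subgraph formed by the segments recorded during successful extensions; the pairwise disjointness of matching edges, together with their disjointness from $\{s_1,s_2\}$, guarantees that no vertex is reused, so this subgraph remains a valid dragon tail and the recorded wish triangles are automatically vertex-disjoint both from each other and from the remaining blue path.
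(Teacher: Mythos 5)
Your strategy is essentially identical to the paper's: grow the dragon tail at two rounds per matching edge from the current endpoint, harvest a wish triangle and back up one vertex when Painter answers monochromatically, restart at $s_2$ if the path is emptied, and stop upon two triangles or exhaustion of the matching, with the same round counts. One tiny slip: a mid-stream failure both wastes a matching edge and (by your own removal of $p$) destroys the last recorded tail segment, so the tail length in the second case is $m-2$ rather than the ``at least $m-1$'' you state --- but $m-2$ is exactly what the lemma claims, so nothing is lost.
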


\begin{proof}
    Builder gradually builds a dragon tail of length $d$.
    In every step, Builder will select two edges that either form a new wish triangle or they extend the dragon tail by one.

    Initially, the dragon tail has $0$ length and it starts in $s_1$.
    Having a partially built dragon tail of length $d$ in hand, let $u$ be its last vertex and let $vw$ be an unused red edge, i.e.,~$vw$ is an edge of $H$ such that Builder has not selected any edge incident to $v,w$ yet.
    Builder selects edges $uv$ and $uw$.

    Suppose that Painter colors edges $uv$ and $uw$ with different colors.
    Without loss of generality suppose that $uv$ is red and $uw$ is blue.
    Then the dragon tail is extended by one as the new blue edge $uw$ is adjacent to red edges $uv$ and $vw$.
    For the next step, Builder assigns $u:=w$, i.e., in the next step Builder selects the two edges from the vertex that was the endpoint of the blue edge, see \Cref{fig:dragon_tail2}.

    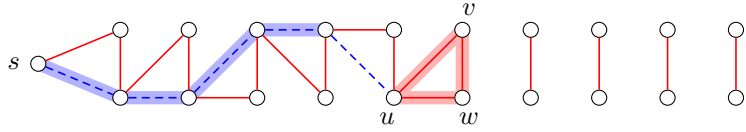
\begin{figure}
        \centering
        \begin{tikzpicture}[scale=0.9]
            \tikzstyle{every node}=[main]
            \begin{scope}
                \node[label=180:{$s$}] (w1) at (.8,.5) {};
                \foreach \x in {2,...,11}{
                    \node (v\x) at (\x,1) {};
                    \node (w\x) at (\x,0) {};
                    \draw[red] (v\x) -- (w\x);
                }
                \draw[blue] (w1) -- (w2) -- (w3) -- (v4) -- (v5) -- (w6);
                \draw[red] (w1) -- (v2);
                \draw[red] (w2) -- (v3);
                \draw[red] (w3) -- (w4);
                \draw[red] (v4) -- (w5);
                \draw[red] (v5) -- (v6);
                \draw[red] (v7) -- (w6) -- (w7);
                \draw[red,selected] (v7) -- (w6) -- (w7) -- (v7);
                \draw[blue,selected] (w1) -- (w2) -- (w3) -- (v4) -- (v5);
                \node[hide] at (5.9,-.3) {$u$};
                \node[hide] at (7.1,1.3) {$v$};
                \node[hide] at (7.1,-.3) {$w$};
            \end{scope}
        \end{tikzpicture}
        \caption{
            Initially red matching after six steps of Builder's strategy.
            The resulting blue path and the wish triangle are outlined.
            Observe the dragon tail $\dtail(5)$ spanning from $s$ to $u$.
        }
        \label{fig:dragon_tail2}
    \end{figure}

    If the selected edges $uv$ and $uw$ are of the same color, then a wish triangle $uvw$ was created.
    Builder adds this wish triangle to a set of wish triangles $T$ and shortens the dragon tail by one so that it does not overlap with the wish triangle.

    Let us remark that if the first wish triangle appears in the first step, then Builder chooses $s_2$ and starts building a dragon tail again.
    If the second wish triangle was created with $s_2$ as its vertex, then Builder finishes the game with the two wish triangles and any free vertex as the blue path on $t=0$ vertices.

    Builder continues increasing the dragon tail as long as there are less than two wish triangles and there are unused red edges of the matching.
    In the case where the second wish triangle is created at the moment the dragon tail has length $d\ge 1$, additionally to the two wish triangles Builder has a blue path $P$ of length $d-1$ as an induced subgraph of the dragon tail $\dtail(d)$ built so far and the path has one of its endpoints in $s_1$ or $s_2$.
    Notice that then the game lasts at most $2(d-1)+8$ rounds since Builder has selected $d-1$ pairs of edges incident to $P$ and additional 4 pairs of edges, two pairs for every wish triangle disjoint from a partially built dragon tail.

    After $2m$ rounds there are no unused red edges.
    If there is at most one wish triangle in the host graph, then Builder obtains a dragon tail of length $m-2$ (or longer if there is no with triangle).
\end{proof}

\newcommand{\builddragontail}[1]{
    \node[main] (p1) at (0,0){};
    \pgfmathtruncatemacro\lastVertex{#1+1}
    \foreach[count=\i] \x in {2,...,\lastVertex}{
        \node[main] (p\x) at (\i,0) {};
        \pgfmathtruncatemacro\xx{\x-1}
        \node[main] (m\x) at ($(p\xx)+(60:1)$) {};
        \draw[blue] (p\xx) -- (p\x);
        \draw[red] (p\xx) -- (m\x) -- (p\x);
    }
}
\newcommand{\builddragon}[3]{ 
    \begin{scope}[shift={(-#3 cm,0)}]
        \foreach \x in {1,...,#1}{
            \node[main] (c\x) at (360*\x/#1-360/#1:#3) {};
        }
        \foreach \f in {1,...,#1}{
            \pgfmathtruncatemacro\t{Mod(\f,#1)+1}
            \draw[blue] (c\f) -- (c\t);
        }
    \end{scope}
    \builddragontail{#2}
    \draw[blue] (p2) -- (c#1);
}

Having a dragon tail in hand, let us now show that Builder can force a long blue cycle.

\begin{lemma}\label{lem:tail2cycle}
    Suppose $H$ is a dragon tail of length $m$ and $u_0u_1$ is its first blue edge.
    Then Builder can play so that after less than $\log_2 m+8$ rounds of the game $\RR_H(C_k,C_n)$ there is a red $C_k$ or a blue cycle on more than $m-k$ but at most $m+1$ vertices of the dragon tail and contains the edge $u_0u_1$.
\end{lemma}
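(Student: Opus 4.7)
The plan is to have Builder begin by selecting the chord $u_0u_m$. If Painter colors it blue, the blue path $u_0u_1\ldots u_m$ together with this new chord forms a blue cycle of length $m+1$ on vertices of the dragon tail and containing $u_0u_1$, and we are done in one round. Throughout the rest of the argument we assume Painter colored $u_0u_m$ red, so that this edge serves as a red ``anchor'' for the remainder of the strategy.

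The arithmetic observation driving the strategy is that, because $k$ is odd, there are two forcing patterns Builder can exploit. First, a single red chord $w_aw_b$ between two vertices of the red dragon path combines with the dragon-path segment from $w_a$ to $w_b$ to give a red cycle of length $(b-a)+1$; this equals $k$ exactly when $b-a=k-1$. Second, a pair of red chords from $u_0=w_0$ to dragon-path positions $b_1<b_2$ combines with the dragon segment between $w_{b_1}$ and $w_{b_2}$ to give a red cycle of length $(b_2-b_1)+2$; this equals $k$ exactly when $b_2-b_1=k-2$. Each of these patterns will let Builder force Painter's hand on suitably chosen chords.

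Builder now performs a doubling procedure, maintaining an interval of dragon-path positions that must contain the eventual endpoint of the cycle-closing blue chord. In each round, he queries a chord $u_0w_b$ whose position $b$ approximately halves the interval and is chosen so that coloring $u_0w_b$ red would create a red $C_k$ — either via the single-chord mechanism, or via the two-chord mechanism using the current red anchor. Painter is forced to answer blue, after which Builder updates his anchor and his interval. Fewer than $\log_2 m$ halvings shrink the interval to size $O(k)$, yielding a forced-blue chord $u_0w_b$ with $b$ within $O(k)$ of $2m$, i.e., landing on a dragon-tail vertex close to $u_m$. A constant number of additional rounds, invoking \Cref{lem:red_gluepath} (and the techniques of \Cref{lem:path2cycle}) to handle the case where $w_b$ is a $v$-vertex rather than a $u$-vertex, close up the short remaining gap. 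Since Builder always anchors at $u_0$, the resulting blue cycle necessarily contains the edge $u_0u_1$; since the final chord lies within $O(k)$ of $u_m$ along the dragon path, the cycle has strictly more than $m-k$ and at most $m+1$ vertices.

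The hard part will be orchestrating the doubling so that every queried chord is in fact forced blue. Because $k$ is odd, the single-chord mechanism forces chords whose endpoints share a given parity along the dragon path, while the two-chord mechanism (via the anchor $u_0u_m$) forces chords with the opposite parity pattern; Builder must alternate parities of the queried positions while preserving the halving invariant and while arranging for the final forced-blue chord to land near $u_m$. The constant $8$ in the bound absorbs the opening move, the parity/boundary adjustments inside the doubling, and the $O(1)$ closing rounds.
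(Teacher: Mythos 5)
Your opening move ($u_0u_m$; blue gives $C_{m+1}$, red becomes an anchor) matches the paper, but the core of your argument --- a ``doubling procedure'' in which \emph{every} queried chord is forced blue by a red-$C_k$ threat --- does not work, and you have flagged the problem yourself without resolving it. The two forcing mechanisms you describe only apply to chords at prescribed red-distance from existing red edges: the single-chord mechanism forces $u_0w_b$ only when the red dragon-path distance from $u_0$ to $w_b$ is exactly $k-1$, and the two-chord mechanism forces it only when $w_b$ is at red-distance exactly $k-2$ from the endpoint of an already-red chord out of $u_0$. In a dragon tail the red path from $u_0$ to $u_i$ has length $2i$, so initially the only forced-blue chord of the form $u_0u_i$ is $u_0u_{(k-1)/2}$; every forceable position is within $O(k)$ of an existing red structure, whereas a halving step must jump $\Theta(m/2^i)$ positions. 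These requirements are incompatible, so Painter (e.g., by coloring red every chord that does not instantly complete a red $C_k$) defeats the procedure. There is also an internal inconsistency: if each queried chord really were forced blue, it could not ``update the anchor'' (anchors must be red), and Builder would not need $\log_2 m$ rounds at all --- he could force a single chord near $u_m$ directly.

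The paper's argument is different in exactly the place where yours breaks. Its binary search does \emph{not} force Painter's answers; Painter colors each chord $u_0u_i$ freely, and Builder merely exploits the boundary invariant (blue at $u_0u_1$, red at $u_0u_m$) to locate, in $\lceil\log_2 m\rceil$ free-answer rounds, an index $j$ with $u_0u_j$ blue and $u_0u_{j+1}$ red --- a discrete intermediate-value argument. Only \emph{after} that does Builder play $O(1)$ carefully chosen chords ($u_{j+1}u_m$, then $u_0u_s$ or $u_{j-1}u_t$ with $s,t$ at distance $(k-3)/2$ from $m$ or $j$, plus special handling for $k\le 5$ using the $w$-vertices), each of which presents Painter with a genuine red-$C_k$-versus-long-blue-cycle dilemma built from the red zigzag of the dragon tail together with the red chords $u_0u_{j+1}$, $u_0u_m$ and the blue chords $u_0u_j$, $u_{j+1}u_m$ already on the board. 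To repair your proof you would need to replace the forced-blue doubling with this kind of free-answer search for a color transition, and then supply the explicit $O(1)$ endgame.
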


\begin{proof}
    We assume that Painter does not create a red $C_k$.
    Suppose that $m>k\ge 3$ and a dragon tail $H$ is a union of a blue path $u_0 u_1 \dots u_m$ and a red path $u_0w_1u_1\dots w_m u_m$.
    In the first round of $\RR_H(C_k,C_n)$ Builder selects the edge $u_0u_m$.
    If Painter colors it blue, then Builder reached his goal by obtaining a blue $C_{m+1}$.
    From now on, we assume that $u_0u_m$ is red.

    Note that if Builder created every edge from $u_0$ to $u_s$ for each $0 < s < m$, then for some index $i$ there would be a blue $u_0u_j$ and a red $u_0u_{j+1}$ because $u_0u_1$ is blue and $u_0u_m$ is red.
    However, in order to find such an index $j$, Builder can select much less edges: he performs a standard binary search (see \Cref{fig:dragon_tail_halving}).
    More precisely, Builder selects $u_0u_{\lfloor m/2\rfloor}$.
    If Painter colors it red, then Builder focuses on part of the path from $u_0$ to $u_{\lfloor m/2\rfloor}$; otherwise, he focuses on part of the path from $u_{\lfloor m/2\rfloor}$ to $u_m$.
    He then continues selecting edges $u_0u_i$ using the same approach on the focused part.
    In the end he ends up focusing on a part that contains only two vertices and therefore finds an index $1\le j\le m-1$ such that the edge $u_0u_j$ is blue and $u_0u_{j+1}$ is red.
    At each step the number of vertices between endpoints of the focused path is halved so this procedure takes at most $\lceil \log_2 m\rceil$ rounds.
    If $j \ge m-k$, then we have a blue cycle $u_0 u_1 \dots u_j$ on more than $m-k$ vertices.
    \begin{figure}[ht]
        \centering
        \def\tailsize{14}
        \begin{tikzpicture}[scale=0.7]
            \builddragontail{\tailsize}
            \draw[red]  (p1) edge[bend right=30]node[near end,above,sloped,inner sep=2pt,text=red]{1.}  (p15);
            \draw[red]  (p1) edge[bend right=32]node[very near end,below,sloped,inner sep=2pt,text=red]{2.}  (p8);
            \draw[blue] (p1) edge[bend right=36]node[near end,above,sloped,inner sep=2pt,text=blue]{3.} (p5);
            \draw[red]  (p1) edge[bend right=34]node[very near end,below,sloped,inner sep=2pt,text=red]{4.}  (p6);
            \draw[blue,selected] (p1) -- (p5) edge[bend left=36] (p1);
        \end{tikzpicture}
        \caption{
            Binary halving example on $\dtail(\tailsize)$ with order of edge placements by the Builder.
            If the outlined blue cycle is long enough, then Builder wins immediately.
        }
        \label{fig:dragon_tail_halving}
    \end{figure}
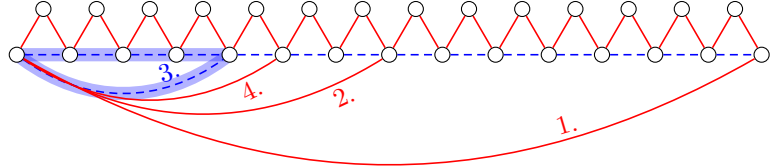

    From now on suppose that $j<m-k$.
    In the next round Builder selects the edge $u_{j+1}u_m$.
    We consider two possible Painter's responses.

    If $u_{j+1}u_m$ is red, then Builder achieves his goal by selecting $u_0u_s$ for $s=m-(k-3)/2$, see \Cref{fig:dragon_red_case}.
    If Painter colors $u_0u_s$ red, then Builder obtains a red cycle on $k$ vertices $u_s w_{s+1} u_{s+1} w_{s+2} \dots u_{m-1} w_m u_m u_{j+1} u_0$.
    If Painter colors it blue, Builder obtains a blue cycle on $m-(k-3)/2$ vertices $u_j u_{j+1}\dots u_s u_1 u_2 \dots u_{j-1}$ (note that this is more than $m-k$).
    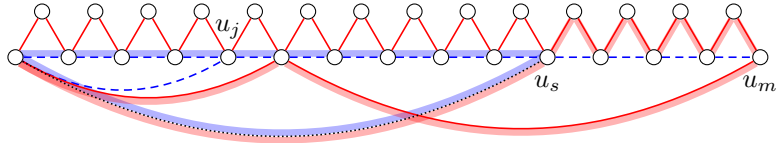
\begin{figure}[ht]
        \centering
        \def\tailsize{14}
        \begin{tikzpicture}[scale=0.7]
            \builddragontail{\tailsize}
            \node[label=90:$u_j$] at (p5) {};
            \node[label=-90:$u_s$] at (p11) {};
            \node[label=-90:$u_m$] at (p15) {};
            \draw[blue] (p1) edge[bend right] (p5);
            \draw[red]  (p1) edge[bend right] (p6);
            \draw[red]  (p6) edge[bend right] (p15);
            \draw[tobe] (p1) edge[bend right] (p11);
            \draw[blue,selected,transform canvas={shift={(0,.5mm)}},line width=2.5pt] (p1) edge[bend right,] (p11)(p11) -- (p1);
            \draw[red,selected,transform canvas={shift={(0,-.5mm)}},line width=2.5pt] (p1) edge[bend right] (p11)(p11) -- (m12) -- (p12) -- (m13) -- (p13) -- (m14) -- (p14) -- (m15) -- (p15) edge[bend left] (p6) (p6) edge[bend left] (p1);
        \end{tikzpicture}
        \caption{
            Cases where $u_{j+1}u_m$ is red within $\dtail(\tailsize)$ assuming $k=11$.
            Resulting cycles are outlined; the outline below edges is for the red cycle and the outline above edges is for the blue one.
        }
        \label{fig:dragon_red_case}
    \end{figure}

    Suppose that $u_{j+1}u_m$ is blue.
    We consider two cases based on the value of $k$ as we need to perform extra steps when $k$ is very small.

    For the first case, assume that $k\ge 7$.
    Builder achieves his goal by playing $u_{j-1}u_t$ with $t=j+(k-3)/2$, see \Cref{fig:dragon_blue_case}.
    If Painter colors $u_{j-1}u_t$ red, then we have a red cycle on $k$ vertices $u_{j-1} w_j u_j w_{j+1} \dots u_{t-1} w_t u_t$.
    If Painter colors it blue, then we have a blue cycle on at least $m-k$ vertices $u_0 u_1 \dots u_{j-1} u_t u_{t+1} \dots u_m u_{j+1} u_j$.
    \begin{figure}[ht]
        \centering
        \def\tailsize{14}
        \begin{tikzpicture}[scale=0.7]
            \builddragontail{\tailsize}
            \node[label=90:$u_j$] at (p5) {};
            \node[label=-90:$u_t$] at (p9) {};
            \node[label=-90:$u_m$] at (p15) {};
            \draw[blue] (p1) edge[bend right] (p5);
            \draw[red]  (p1) edge[bend right] (p6);
            \draw[blue] (p6) edge[bend right] (p15);
            \draw[tobe] (p4) edge[bend right] (p9);
            \draw[blue,selected,transform canvas={shift={(0,.5mm)}},line width=2.5pt] (p9) -- (p15) edge[bend left] (p6)(p6)--(p5)edge[bend left](p1)(p1) -- (p4) edge[bend right] (p9);
            \draw[red,selected,transform canvas={shift={(0,-.5mm)}},line width=2.5pt] (p4) -- (m5) -- (p5) -- (m6) -- (p6) -- (m7) -- (p7) -- (m8) -- (p8) -- (m9) -- (p9) edge[bend left] (p4);
        \end{tikzpicture}
        \caption{
            Cases where $u_{j+1}u_m$ is blue within $\dtail(\tailsize)$ assuming $k=11$.
            Resulting cycles are outlined; the outline below edges is for the red cycle and the outline above edges is for the blue one.
        }
        \label{fig:dragon_blue_case}
    \end{figure}
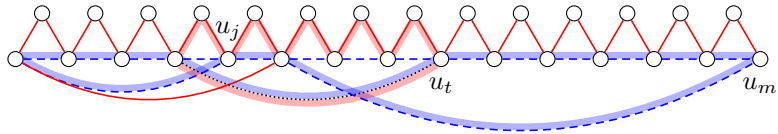

    In the other case where $k\le 5$, Builder always selects $u_{j-1}u_{j+2}$.
    If it is colored blue, then we have the same long blue cycle as in the first case.
    Hence, assume that the edge $u_{j-1} u_{j+2}$ is red.
    Builder proceeds based on the exact value of $k$, see \Cref{fig:dragon_small_cases}.
    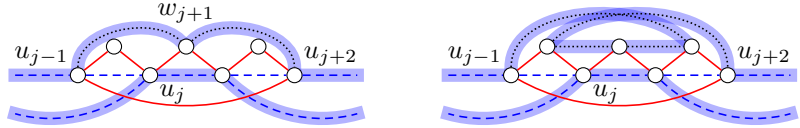
\begin{figure}[ht]
        \def\drawsmallbase{
            \node[main] (p1) at (0,0) {};
            \foreach \x in {2,...,4}{
                \pgfmathtruncatemacro\xx{\x-1}
                \node[main] (p\x) at (\xx,0) {};
                \node[main] (m\x) at ($(p\xx)!.5!(p\x)+(0,.4)$) {};
                \draw[blue] (p\xx) -- (p\x);
                \draw[red] (p\xx) -- (m\x) -- (p\x);
            }
            \draw[red] (p1) edge[bend right] (p4);
            \node (p0) at (-1,0) {};
            \draw[blue] (p0) -- (p1);
            \node (p5) at (4,0) {};
            \draw[blue] (p4) -- (p5);
            \node (leftbot) at (-1,-.5) {};
            \draw[blue] (p2) edge[bend left] (leftbot);
            \node (rightbot) at (4,-.5) {};
            \draw[blue] (p3) edge[bend right] (rightbot);
            \node[anchor=south east] at (p1) {$u_{j-1}$};
            \node[anchor=south west] at (p4) {$u_{j+2}$};
            \node[anchor=north west] at (p2) {$u_j$};
        }
        \centering
        \begin{tikzpicture}[scale=.95,looseness=.9]
            \begin{scope}
                \drawsmallbase
                \node[anchor=south] at ($(m3)+(0,.2)$) {$w_{j+1}$};
                \draw[tobe] (p1) edge[out=90,in=135] (m3);
                \draw[blue,selected] (p1) edge[out=90,in=135] (m3);
                \draw[tobe] (m3) edge[out=45,in=90] (p4);
                \draw[blue,selected] (m3) edge[out=45,in=90] (p4);
                \draw[blue,selected] (p0) -- (p1) (p4) -- (p5);
                \draw[blue,selected] (rightbot) edge[bend left] (p3) (p3) -- (p2) edge[bend left] (leftbot);
            \end{scope}
            \begin{scope}[shift={(6cm,0)}]
                \drawsmallbase
                \draw[tobe] (p1) edge[out=90,in=135] (m4);
                \draw[blue,selected] (p1) edge[out=90,in=135] (m4);
                \draw[tobe] (m2) edge[out=45,in=90] (p4);
                \draw[blue,selected] (m2) edge[out=45,in=90] (p4);
                \draw[tobe] (m2) -- (m3) -- (m4);
                \draw[blue,selected] (m2) -- (m3) -- (m4);
                \draw[blue,selected] (p0) -- (p1) (p4) -- (p5);
                \draw[blue,selected] (rightbot) edge[bend left] (p3) (p3) -- (p2) edge[bend left] (leftbot);
            \end{scope}
        \end{tikzpicture}
        \caption{
            Closeup of vertices from $u_{j-1}$ to $u_{j+2}$ of the dragon tail; we see the final set of cases where $u_{j+1}u_m$ is blue for $k=5$ (left) and $k=3$ (right).
            Note how the created (dotted) edges form a red cycle of length $k$ if Painter colors them red.
            The final blue cycle is outlined.
        }%
        \label{fig:dragon_small_cases}
    \end{figure}

    If $k=5$, then in the next two rounds Builder forces edges $u_{j-1}w_{j+1}$ and $w_{j+1}u_{j+2}$ blue and hence we get a blue cycle longer than $m-k$, on vertices
    \[
        u_0 u_1 \dots u_{j-1} w_{j+1} u_{j+2} u_{j+3} \dots u_m u_{j+1} u_j.
    \]
    If $k=3$, then in the next four rounds Builder forces a blue path $u_{j+2}w_{j}w_{j+1}w_{j+2}w_{j-1}$ and again a cycle longer than $m-k$ arises on vertices
    \[
        u_0 u_1 \dots u_{j-1} w_j w_{j+1} w_{j+2} u_{j+2} u_{j+3} \dots u_m u_{j+1} u_j.
    \]
    Finally, observe that in all cases Builder achieves his goal within at most $1+\lceil \log_2 m\rceil+6$ rounds of the game $\RR_H(C_k,C_n)$.
\end{proof}

\begin{definition}
    Let \emph{dragon graph} $\dragon(t,m)$ be a graph that consists of a blue cycle on $t$ vertices and a dragon tail $\dtail(m)$ of length $m$ that shares one of its endpoints $u$ with the blue cycle, and has a blue edge between vertex of the cycle adjacent to $u$ and a vertex of the dragon tail adjacent via blue edge with $u$, see \Cref{fig:dragon}.
\end{definition}

\begin{figure}[ht]
    \centering
    \def\cyclesize{13}
    \def\tailsize{8}
    \begin{tikzpicture}[scale=0.8]
        \builddragon{\cyclesize}{\tailsize}{1.8}
        \draw[blue,selected] (c1) -- (c2) -- (c3) -- (c4) -- (c5) -- (c6) -- (c7) -- (c8) -- (c9) -- (c10) -- (c11) -- (c12) -- (c13) -- (p2);
        \node[anchor=east] at ($(p1)+(-.2,0)$) {$u_0$};
        \node[anchor=north] at ($(p2)+(.2,-.1)$) {$u_1$};
    \end{tikzpicture}
    \caption{Dragon graph $\dragon(\cyclesize,\tailsize)$ with outlined path that is used instead of edge $u_0u_1$.}
    \label{fig:dragon}
\end{figure}

\begin{lemma}\label{lem:blueredending}
    Suppose $H$ is a dragon graph $\dragon(t,m)$, $t\ge 3$ and $m>k\ge 3$.
    Let ${\mathcal C}$ be the family of all cycles of length at least $t+m-k$ and not greater than $t+m$.
    Then
    \[
        \rr_{\dragon(t,m)}(C_k,{\mathcal C})\le \log_2 m +8.
    \]
\end{lemma}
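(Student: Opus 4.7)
The plan is to reduce directly to \Cref{lem:tail2cycle} by using the big blue cycle as a \emph{detour} for the edge $u_0u_1$. Denote the blue $t$-cycle of $\dragon(t,m)$ by $u_0, c_1, c_2, \ldots, c_{t-1}, u_0$ and, without loss of generality, let the extra blue edge from the definition of a dragon graph be $c_{t-1}u_1$. Then $Q := u_0 c_1 c_2 \cdots c_{t-1} u_1$ is a blue path of length $t$ from $u_0$ to $u_1$ in $\dragon(t,m)$ whose $t-1$ interior vertices $c_1,\ldots,c_{t-1}$ lie outside $V(\dtail(m))$.

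Builder now plays the strategy of \Cref{lem:tail2cycle} on the embedded dragon tail, pretending the big blue cycle is not there. Every edge selected by that strategy has both endpoints in $V(\dtail(m))$, and no red edge of $\dragon(t,m)$ lies outside $\dtail(m)$, so the case analysis of \Cref{lem:tail2cycle} applies verbatim. After at most $\log_2 m + 8$ rounds, either Painter creates a red $C_k$ and Builder wins, or the host graph contains a blue cycle $D$ on $\ell$ vertices of the dragon tail with $m-k < \ell \le m+1$ which contains the blue edge $u_0u_1$.

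In the remaining case, the substitution of the edge $u_0u_1$ inside $D$ by the path $Q$ is legitimate because the interior of $Q$ is vertex-disjoint from $V(D) \subseteq V(\dtail(m))$. Since all edges of $Q$ are already blue in the host graph, this substitution produces a blue cycle of length $\ell + t - 1$ that is actually present in the host graph. The bound $m-k+1 \le \ell \le m+1$ yields $t+m-k \le \ell + t - 1 \le t+m$, so the new cycle belongs to $\mathcal{C}$ and Builder wins within $\log_2 m + 8$ rounds as required.

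The one place where care is needed is in arguing that the strategy of \Cref{lem:tail2cycle} really does transfer to the richer starting board $\dragon(t,m)$; I expect this to be the only subtlety, but it is purely a sanity check. Since the strategy's moves and the potential red $C_k$'s that it threatens involve only dragon-tail vertices, and the extra edges of $\dragon(t,m)$ are blue and therefore irrelevant to avoiding a red $C_k$, nothing in the case analysis of \Cref{lem:tail2cycle} depends on the absence of the additional blue structure.
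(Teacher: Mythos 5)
Your proposal is correct and matches the paper's own proof: both apply \Cref{lem:tail2cycle} to the embedded dragon tail to get a blue cycle through $u_0u_1$ of length in $(m-k,\,m+1]$, and then reroute the edge $u_0u_1$ through the big blue cycle via the extra blue edge, yielding a blue cycle of length $\ell+t-1\in[t+m-k,\,t+m]$. The only difference is that you explicitly note the strategy transfers to the larger board, which the paper leaves implicit.
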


\begin{proof}
    Let $u_0u_1$ be the first edge of the tail of the dragon graph.
    Suppose that $u_0$ lies on the blue cycle $C$ of the dragon graph, $x$ is its neighbor at the cycle and the edge $xu_1$ is blue.
    Based on Lemma \ref{lem:tail2cycle}, Builder can play so that after at most $\log_2 m+8$ rounds of the game $\RR_H(C_k,C_n)$ there is a blue cycle $C'$ on more than $m-k$ vertices of the dragon tail and $u_0u_1\in E(C')$, see \Cref{fig:dragon}.
    Observe that $(C\cup C'\cup\{xu_1\})\setminus\{u_0u_1\}$ is a blue cycle on at least $t+m-k$ vertices of $H$.
\end{proof}

\subsection{Builder's strategy}

For simplicity, assume that Painter never creates a red $C_k$.
Builder's strategy for odd $k$ is presented in three separate stages.
The first stage ends if either the colored graph has $2n+8k$ vertices or it contains two wish triangles.
In the second stage, Builder creates a blue cycle on at least $n$ vertices.
In the third stage, Builder shortens the blue cycle to exactly $n$ vertices.

\subsubsection*{Stage I}

Throughout the first stage, Builder keeps three vertex disjoint structures: a red matching $M$, an almost blue path $B$, and a set of wish triangles $T$.
As long as the host graph has less than $2n+8k$ vertices and $T$ contains less than two triangles, Builder draws a free edge.
If it is colored red, he adds it to $M$.
Otherwise, he plays according to \Cref{lem:extendblue} which either extends $B$, or creates a wish triangle and shortens $B$ by two vertices.
In case the wish triangle was created, Builder puts it to $T$ and if $T$ contains two triangles, the first stage ends.
See the decision diagram in \Cref{fig:stagesCases}.

Suppose that at the end of Stage I we have $v(B)=t$, $e(M)=m$ and the host graph is $H_1$.
Based on \Cref{lem:extendblue}, one can calculate that the number of rounds in Stage I is not greater than
\[
    e(M)+\frac32 (v(B)+4)+10=m+\frac32 t+16.
\]
The first term stands for one round used for each edge in $M$, second term represents three edges created for each pair of vertices within $B$ (plus $4$ because of the lost vertices when wish triangles are created), and the last term is for the $5$ rounds spent creating each wish triangle.

The number of vertices of the host graph at the end of Stage I satisfies the inequalities
\begin{equation}\label{eq:first_stage}
    v(H_1)\le 2n+8k\text{ and }2m+t\le v(H_1)\le 2m+t+8.
\end{equation}
Furthermore, $v(H_1)= 2n+8k$ if $T$ contains less than two wish triangles.

\subsubsection*{Stage II}
This stage starts with the host graph $H_1$, containing $M$, $B$, and $T$ from Stage I.
The goal is to obtain a blue cycle of length between $n$ and $2n+\Oh(k)$.

We consider three cases, the first one is based on appearance of two wish triangles.
In the other cases, only an almost blue path $B$ on $t$ vertices and a red matching $M$ with $m$ edges is important for Builder.
Then, in view of the calculations at the end of Stage I, we have $t+2m\ge v(H_1)-8 = 2n+8k-8$, hence either $M$ is big or $B$ is long -- this constitutes the second and the third case of this stage.
See \Cref{fig:stagesCases} for an overview of how the cases interact.

\begin{figure}[!ht]
    \centering
    \begin{tikzpicture}[scale=.8]
        \tikzset{decision/.style={black, fill=white, chamfered rectangle, chamfered rectangle xsep=1mm}}
        \tikzset{structure/.style={violet, draw, fill=white, rectangle, rounded corners=2mm}}
        \tikzset{action/.style={black, fill=white, draw, rectangle, thick}}
        \def\stagewidth{15cm}
        \newcommand{\prounds}[1]{{\small\color{green!60!black}(#1)}}
        \begin{scope}[shift={(0cm,13cm)}] 
            \node[text=gray,rotate=90] at (.5,4.5) {Stage I};
            \node[structure] at (8,8) {$B \cup M \cup T$};
            \node[decision] (loop_stage1) at (8,7) {Less than $2n+8k$ vertices in the colored graph?};
            \node[action,text width=2.5cm] (s1_edge) at (7,5) {Create a free edge $e$ \prounds{1 round}};
            \node[decision] (s1_edge_colored) at (6,3) {$e$ is colored};
            \node[action] (s1_red) at (10,3) {Add $e$ to $M$};
            \node[action,text width=3cm] (s1_blue) at (4,1) {\Cref{lem:extendblue} \prounds{$8$ per $\TRI$ \& $1$ per path vertex}};
            \node[decision] (s1_blue_path_long) at (12,1) {Is blue path long?};
            \draw[thick,gray] (0,0) -- ++(\stagewidth,0);
            \draw (loop_stage1) edge[->,bend right=10]node[right]{yes} (s1_edge);
            \draw (loop_stage1.-10) edge[->,bend left=30]node[right]{no} (s1_blue_path_long);
            \draw (s1_edge) edge[->,bend left=10] (s1_edge_colored);
            \draw (s1_edge_colored) edge[red,->,bend right=50]node[text=red,above]{red} (s1_red);
            \draw (s1_edge_colored) edge[blue,->,bend left=10]node[text=blue,left]{blue} (s1_blue);
            \draw (s1_red) edge[->,bend right=15] (loop_stage1);
            \draw (s1_blue) edge[->,bend left] node[text width=2cm,left]{longer $B$ or $1^{\rm st}$ wish triangle in $T$} (loop_stage1.-170);
        \end{scope}
        \begin{scope}[shift={(0cm,6cm)}] 
            \node[text=gray,rotate=90] at (.5,3.5) {Stage II};
            \draw[thick,gray] (0,0) -- ++(\stagewidth,0);
            \node[structure] (approximate_cycle) at (8.2,0) {$C_{n+\Oh(k)}$};
            \begin{scope}[shift={(0.0cm,6.8cm)},text=gray,draw=gray,anchor=north] 
                \node (a) at (4,0) {wish triangles};
                \node (b) at (8.5,0) {red matching};
                \node (c) at (13,0) {blue path};
                \draw ($(a)!.5!(b)$) -- ++(0,-6);
                \draw ($(c)!.5!(b)$) -- ++(0,-6);
            \end{scope}
            \begin{scope}[shift={(4cm,0cm)}] 
                \node[structure] (s2_wishes_and_path) at (0,5.5) {$2\TRI \cup\, P_{t'}$};
                \node[action] (wish_blue_path) at (0,4.5) {\Cref{lem:longerpath} \prounds{$2s+12k$}};
                \node[structure] (s2_wish_and_path) at (0,3.5) {$\TRI \cup P_n$};
                \node[action] (wish_join_cycle) at (0,2.5) {\Cref{lem:gluepath} \prounds{$k+4$}};
                \draw[->] (s2_wishes_and_path) -- (wish_blue_path);
                \draw[->] (wish_blue_path) -- (s2_wish_and_path);
                \draw[->] (s2_wish_and_path) -- (wish_join_cycle);
            \end{scope}
            \begin{scope}[shift={(8.2cm,0cm)}] 
                \node[structure] (s2_matching) at (0,5.5) {$M$};
                \node[action] (match_tail) at (0,3.5) {\Cref{lem:redmatching} \prounds{$2m$ or $2t'+6$}};
                \node[structure] (s2_tail) at (0,2.5) {$\dtail(m-\Oh(1))$ tail};
                \node[action] (match_tail_search) at (0,1.5) {\Cref{lem:tail2cycle} \prounds{$\log_2m+8$}};
                \draw[->] (s2_matching) -- (match_tail);
                \draw[->] (match_tail) -- (s2_tail);
                \draw (match_tail) edge[->,out=125,in=-5] (s2_wishes_and_path);
                \draw[->] (s2_tail) -- (match_tail_search);
            \end{scope}
            \begin{scope}[shift={(13cm,0cm)}] 
                \node[structure] (s2_path) at (0,5.5) {blue path $B$};
                \node[action] (s2_to_cycle) at (0,4.5) {\Cref{lem:path2cycle} \prounds{$k+1$}};
                \node[structure] (s2_small_cycle) at (0,3.5) {$C_{<n}$};
                \node[action] (s2_build_dragon) at (0,2.5) {\Cref{lem:redmatching} \prounds{$2m$ or $2t'+6$}};
                \node[structure] (s2_dragon) at (0,1.5) {$\dragon(t',m-\Oh(1))$};
                \node[action] (path_case4) at (0,0.5) {\Cref{lem:blueredending} \prounds{$\log_2m+8$}};
                \draw[->] (s2_path) -- (s2_to_cycle);
                \draw[->] (s2_to_cycle) -- (s2_small_cycle);
                \draw (s2_to_cycle) edge[->,out=-135,in=45] (approximate_cycle);
                \draw[->] (s2_small_cycle) -- (s2_build_dragon);
                \draw (s2_build_dragon) edge[->,out=135,in=-5] (s2_wishes_and_path);
                \draw[->] (s2_build_dragon) -- (s2_dragon);
                \draw[->] (s2_dragon) -- (path_case4);
            \end{scope}
            \draw (wish_join_cycle) edge[->,out=-80,in=135] (approximate_cycle);
            \draw[->] (match_tail_search) -- (approximate_cycle);
            \draw[->] (path_case4) edge[out=184,in=25] (approximate_cycle);
        \end{scope}
        \draw (s1_blue) edge[->,bend right=20] (s2_wishes_and_path);
        \draw (s1_blue_path_long) edge[->,bend left=12] (s2_matching);
        \draw (s1_blue_path_long) edge[->,bend right=30] (s2_path);
        \begin{scope}[shift={(0cm,0.5cm)}] 
            \node[text=gray,rotate=90] at (.5,4.2) {Stage III};
            \node[action] (s3_theorem) at (8.2,4.5) {\Cref{thm:shortenall} \prounds{$3k+10$}};
            \node[structure] (s3_cycle) at (8.2,3.5) {$C_n$};
            \draw[->] (approximate_cycle) -- (s3_theorem);
            \draw[->] (s3_theorem) -- (s3_cycle);
        \end{scope}
    \end{tikzpicture}
    \caption{
        Overview of Builder's strategy in Stage I, Stage II, and Stage III.
        $\TRI$ denotes wish triang
        Thick rectangles signify Builder's steps (with number of rounds in brackets); angled corners are decisions; purple boxes with rounded corners are structures that Builder obtained in that particular phase of the strategy.
        Arrows from lemmata point to graphs that may be obtained by them.
    }%
    \label{fig:stagesCases}
\end{figure}

\paragraph*{Two wish triangles Case}
In this case Builder focuses on two wish triangles in $T$ and the almost blue path $B$ on $t\le 2n+8k$ vertices ($t=1$ for an empty path).
If $t> n+\lfloor k/2\rfloor$, then we define $B'$ as an almost blue path on $t'=n+\lfloor k/2\rfloor$ vertices, contained in $B$; otherwise we put $B'=B$ and $t'=t$.
We apply \Cref{lem:longerpath} for $s=n-t'+\lfloor k/2\rfloor$, the path $B'$ and the first of the wish triangles from $T$.
Thereby Builder gets a blue path $B''$ on $t'+s=n+\lfloor k/2\rfloor$ vertices, within $2s+12k\le 2n-2t'+13k$ rounds.
Then, having the blue path $B''$ and the second wish triangle from $T$, in view of \Cref{lem:gluepath2cycle} Builder can force a blue cycle of length between $n$ and $n+\lfloor k/2\rfloor$, within $k+4$ next rounds.
It ends Stage II.
The maximum number of rounds of the second stage in Two wish triangles Case is at most
$(2n-2t'+13k)+(k+4)=2n-2t'+14k+4$.

Other cases may end up having this case as a subroutine so the above number will be considered in analysis for the total number of rounds.

In order to calculate the total number of rounds, observe that the definition of $t'$ implies that $t-2t'<0$ for $t>n+\lfloor k/2\rfloor$, while for $t\le n+\lfloor k/2\rfloor$ we have
\begin{equation}\label{eq:eight_k}
    t-2t'\le v(H_1)-2(n+\lfloor k/2\rfloor)<8k.
\end{equation}
Now we are ready to bound the number of rounds in two stages in Two wish triangles Case.
These number is not greater than
\[
    (m+\frac32 t+16)+(2n-2t'+14k+4) \putabove{\le}{(\ref{eq:first_stage})} \frac12v(H_1)+t-2t'+2n+14k+20\putabove{\le}{(\ref{eq:first_stage},\ref{eq:eight_k})} 3n+26k+20.
\]

\paragraph*{Red matching Case}
In this case we assume that there are less than two wish triangles in $T$ and the almost blue path $B$ is short, namely $t<2k$.
Then Builder ignores $T$ and $B$, focuses on $M$ and plays according to \Cref{lem:redmatching} choosing as starting points any two free vertices.
Then two situations can happen.

\begin{enumerate}
    \item
        For some $t'<m$, after at most $2t'+6$ rounds, we have a blue path on $t'$ vertices and two wish triangles.
        Then Builder continues as in Two wish triangle Case
        and creates a blue cycle of length between $n$ and $n+\lfloor k/2\rfloor$.

        The number of rounds of Stage II in this case is at most $(2t'+6)+(2n-2t'+14k+4)=2n+14k+10$.
        Together with Stage I, for $t<2k$ we have at most
        \[
            (m+\frac32 t+16)+(2n+14k+10)\le \frac12 v(H_1)+t+2n+14k+26<3n+20k+26
        \]
        rounds.

    \item
        After $2m$ rounds we have a dragon tail of length $m-2$.
        Then Builder plays as in \Cref{lem:tail2cycle} and creates a blue cycle of length between $m-2-k$ and $m-1$, within less than $\log_2 m+8$ rounds.
        Observe that $m-1\le(v(H_1)-t)/2\le n+4k$ and $m-2-k\ge (v(H_1)-8-t)/2-2-k> v(H_1)/2-2k-6\ge n$.
        Thus we have a blue cycle of length between $n$ and $n+4k$ and the number of rounds of Stage II in this case is less than $2m+\log_2 m+8$.
        The game proceeds to Stage III.

        The number of rounds of Stages I and II in this case is at most
        \[
            (m+\frac32 t+16)+(2m+\log_2 m+8)\le \frac32 v(H_1)+\log_2 m+24\le 3n+\log_2 n+12k+26.
        \]
\end{enumerate}

\paragraph*{Almost blue path Case}
In this case we assume that there are less than two wish triangles in $T$ and the almost blue path $B$ has at least $2k$ vertices.
Then Builder first focuses on $B$ and based on \Cref{lem:path2cycle} he forces a blue cycle $C$ such that $(t-k)/2\le v(C)\le t$, within at most $k+1$ rounds.
If $v(C)\ge n$, then we end Stage II.
Otherwise, Builder applies his strategy from \Cref{lem:redmatching}, using as starting vertices two adjacent vertices $s_1,s_2$ of the blue cycle $C$.
Furthermore, he never selects edges incident to vertices of $V(C)\setminus \{s_1,s_2\}$.
This results in two subcases.
\begin{enumerate}
    \item
        For some $s<m$ after $2s+6$ rounds the part of the host graph disjoint from $V(C)\setminus \{s_1,s_2\}$ contains three vertex disjoint graphs: two wish triangles and a blue path $B'$ on $s$ vertices such that if $s\ge 2$, then $B'$ starts in $s_1$ or $s_2$.

        If $s=1$, then we have two wish triangles and a blue path on the vertex set $V(C)\setminus\{s_1,s_2\}$, disjoint from the wish triangles.
        If $s\ge 2$, then $B'$ starts in $s_1$ or $s_2$, say $s_2$.
        Then $s_1$ is the only vertex of $C$ which may be a vertex of a wish triangle.
        Thus the sum of $B'$ and the blue path $C\setminus\{s_1\}$ is a blue path on $t'=s+v(C)-1$ vertices.
        Further Builder proceeds as in Two wish triangles Case and creates a blue cycle of length between $n$ and $n+\lfloor k/2\rfloor$.

        The number of rounds of Stage II in this case is at most
        \[
            (k+1)+(2s+6)+(2n-2t'+14k+4)=2n-2v(C)+15k+13\le 2n-t+16k+13.
        \]
        The number of rounds of Stages I and II is
        \[
            (m+\frac32 t+16)+(2n-t+16k+13)\le \frac12 v(H_1)+2n+16k+29\le 3n+20k+29.
        \]

    \item
        After $2m$ rounds Builder obtains a dragon $\dragon(v(C),m-2)$.
        Then we can apply \Cref{lem:blueredending} to that dragon, since
        \begin{eqnarray*}
            m-2 \ge  \frac12(v(H_1)-8-v(B))-2&\ge& \frac12(v(H_1)-2v(C)-k)-6\\
                                             &\ge& \frac12(v(H_1)-2n-k)-10>k.
        \end{eqnarray*}
        Because the dragon has $v(C)+m-2$ vertices and
        \[
            2n+8k>v(C)+m-2\ge \frac12(t-k)+m-2\ge\frac12(v(H_1)-8-k)-2> n,
        \]
        Builder gets a blue cycle of length between $n+1$ and $2n+8k$ and it takes him less than $\log_2 m + 8$ next rounds.
         The number of rounds of Stage II in this case is at most
        \[
            (k+1)+2m+(\log_2 m + 8)=2m+\log_2 m+k+9.
        \]
        Together with Stage I it gives
        \begin{eqnarray*}
            (m+\frac32 t+16)+(2m+\log_2 m+k+9)&\le& \frac32 v(H_1)+\log_2 m+k+25\\
                                              &\le& 3n+\log_2 n+13k+27
        \end{eqnarray*}
        rounds.
\end{enumerate}

\subsubsection*{Stage III}

Builder now has a blue cycle of length between $n$ and $2n+8k$.
For $n\ge 8k$ we apply \Cref{thm:shortenall} to get
a blue $C_n$, which finishes the game.
The number of rounds in Stage III is at most $3k+10$.

Summarizing, the total number of rounds in the game is less than
\[
    3n+\log_2 n+26k+27+3k+10< 3n+\log_2 n+50k.
\]

\section{Short odd cycle vs long path}

It remains to prove \Cref{thm:oddpath}.
The proof is similar to the argument in the previous section, however, it is simpler because only one wish triangle is enough for Builder's need and we do not have to perform the binary search at a dragon tail.
Thus we repeat Stage I, then in Stage II Two wish triangles Case ends when the blue path $B''$ is built.
As for Red matching Case and Almost blue path Case, we repeat the argument but we finish immediately after creating a path of length at least $n$.
We do not need Stage III.
Furthermore, we do need the assumption that $n\ge 8k$ since it is used in Stage III only.
Summarizing, after some rough calculations, we obtain that Builder can force a red $C_k$ or a blue $P_n$ within less than $3n+40k$ rounds.

\section{Future work}

We have proved that $\rr(C_{k},P_n)=2n+o(n)$ for every fixed even $k\ge 3$.
We have no such asymptotically tight result in case of odd $k$.
However, our results, as well as the lower bound on $\rr(C_k,C_n)$, suggest that for fixed odd $k$ the online Ramsey number $\rr(C_k,C_n)$ is not far from $3n$.
We conjecture that it is $(3+o(1))n$.

\begin{conjecture}
    $\rr(C_{k},C_n)=3n+o(n)$ for every fixed odd $k\ge 3$.
\end{conjecture}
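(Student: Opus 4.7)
The upper bound $\rr(C_k,C_n)\le 3n+o(n)$ is already established by \Cref{thm:odd}, so what is needed is the matching lower bound $\rr(C_k,C_n)\ge 3n-o(n)$ for every fixed odd $k\ge 3$. My plan is therefore to exhibit a Painter strategy that survives $(3-o(1))n$ rounds against any Builder play. The best currently known strategy, giving the factor $\varphi+1\approx 2.618$ of Adamski and Bednarska-Bzdęga, is based on a Fibonacci-type potential and seems intrinsically stuck at this constant, so a genuinely new invariant will be needed.

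The natural starting point is to let Painter commit to keeping the red subgraph bipartite: since $k$ is odd, a bipartite red graph automatically avoids red $C_k$. Painter would maintain a dynamic bipartition $(A,B)$ of all touched vertices; when Builder selects an edge, Painter colors it red whenever the two endpoints can be kept on opposite sides of the partition (assigning new sides to previously free vertices when possible), and colors it blue only when red would create an odd red walk. The plan is then to show that every blue edge costs Builder two red edges on average, by attributing to each forced blue edge a private pair of red edges that witness the odd closure which would otherwise arise. Combined with a blue-side potential that bounds the longest blue path by one third of the number of rounds, plus a small correction to close the path into an actual $C_n$, this should yield the $3n-o(n)$ bound.

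The central obstacle is precisely this accounting: it is not obvious that Builder cannot amortize odd closures across many future rounds, so that the ratio of forced blue edges to red edges drops below $1/3$. A plausible workaround is to give Painter an extra budget of ``short odd walks'' of red length up to $k-2$, since these can be permitted without creating a red $C_k$. Formalizing the ensuing trade-off will likely require a potential of the form $\alpha (\text{red deficiency}) + \beta (\text{blue progress})$ with carefully tuned constants, and perhaps a randomized Painter who flips a biased coin when the bipartition is locally indifferent; concentration then replaces the worst-case analysis.

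A second, possibly more tractable, route is to first prove a matching lower bound $\rr(C_k,P_n)\ge 3n-o(n)$ for odd $k$, for which \Cref{thm:oddpath} already supplies the upper bound. Since any blue $C_n$ contains a blue $P_n$, a path lower bound immediately implies the cycle lower bound, and targeting paths removes the extra degree of freedom Builder gains from closing a cycle in many possible places. The hard part everywhere is the same, however: inventing the right potential, and I expect that the eventual proof, if one exists, will look quite different from the golden-ratio argument and may borrow tools from sparse Ramsey theory or from entropy-compression arguments tailored to online games.
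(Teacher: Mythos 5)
This statement is posed in the paper as an open conjecture, not a theorem: the upper bound $\rr(C_k,C_n)\le 3n+\log_2 n+50k$ is \Cref{thm:odd}, but the best known lower bound remains $(\varphi+1)n-2\varphi+1\approx 2.618n$ from the golden-ratio argument of Adamski and Bednarska-Bzd\k{e}ga. Your proposal correctly identifies that the missing piece is a Painter strategy surviving $(3-o(1))n$ rounds, but it does not supply one; it is a research plan whose central step (the amortized accounting that each blue edge costs Builder two additional red edges) is exactly the open problem, and you acknowledge as much. So there is no proof here to compare with the paper's, because the paper has none either.

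Moreover, the main route you sketch fails concretely. A Painter who keeps the red graph bipartite by maintaining a dynamic bipartition $(A,B)$ and colors red whenever the endpoints can be separated is too permissive: Builder plays a red star. He selects $uv_1,u v_2,\dots,uv_n$ with all $v_i$ fresh; your Painter colors every such edge red, placing $u\in A$ and all $v_i\in B$. Now every edge $v_iv_j$ lies inside $B$ and is forced blue, so Builder obtains a blue path (and then a blue cycle) on $\{v_1,\dots,v_n\}$ using only about $n$ red rounds plus $n$ blue rounds, i.e.\ roughly $2n$ rounds in total. Thus the bipartite Painter certifies only the already-known bound $\rr(C_k,C_n)\ge 2n-1$ and cannot reach $3n-o(n)$; any successful Painter must sometimes refuse a red edge even when the red graph would stay bipartite, which is precisely where the difficulty (and the golden-ratio barrier) lives. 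Your second route, via a lower bound $\rr(C_k,P_n)\ge 3n-o(n)$, would indeed imply the cycle lower bound since a blue $C_n$ contains a blue $P_n$, but that path lower bound is equally open, and the paper's \Cref{thm:oddpath} together with the $2.6n$ lower bound shows the gap there is the same.
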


Another question is how the number of the host graph affects the number of rounds in the game.
One can verify that in the proof of \Cref{thm:even} and \Cref{thm:odd} Builder can keep the order of the host graph not greater than $n+ck$ for even $k$, and $2n+ck$ for odd $k$, where $c$ is some absolute (and not big) constant.
For big $n$ these two numbers are quite close to the corresponding Ramsey numbers $r(C_k,C_n)$, equal to $n+k/2-1$ and $2n-1$, respectively.
It would be interesting to know what happens in a restricted version of the Ramsey game, when Builder is allowed to keep the order of the host graph not greater than $r(C_k,C_n)$.
Clearly, the number of rounds in such a game is bounded by the restricted size Ramsey number $r^*(C_k,C_n)$, but it is known only for odd $k$ and is quadratic in $n$.
More precisely, $r^*(C_k,C_n)=\lceil (2n-1)(n+1)/2 \rceil$ for any fixed odd $k\ge 3$ and big enough $n$, as shown by \L{}uczak, Polcyn and Rahimi \cite{LPR}.

\siamver{
\bibliographystyle{siamplain}
}{
\bibliographystyle{plainurl}
}
\bibliography{main}

\end{document}